\definecolor{MyDarkBlue}{rgb}{0,0.08,0.4}
\definecolor{Pomegranade}{rgb}{0.6,0.1,0.15}
\definecolor{purple}{rgb}{0.6,0.1,0.15}
\providecommand{\url}[1]{\small\textcolor{MyDarkBlue}{#1}}
\providecommand{\eprint}[1]{}
\renewcommand{\eprint}[1]{arXiv:\href{http://arxiv.org/abs/#1}{#1}}
\newcommand{\notyet}[1]{}
\makeatletter\@addtoreset {equation}{section}\makeatother
\DeclareSymbolFont{EUR}{U}{eur}{m}{n}
\DeclareSymbolFontAlphabet{\eur}{EUR}
\DeclareSymbolFont{EUB}{U}{eur}{b}{n}
\DeclareSymbolFontAlphabet{\eub}{EUB}
\def\C{\mathbb{C}}
\def\R{\mathbb{R}}
\def\bmupphi{\bm\upphi}
\def\Eta{\mathrm{H}}
\newcommand{\bD}{\mathbf{D}}
\newcommand{\bJ}{\mathbf{J}}
\newcommand{\bW}{\mathbf{W}}
\newcommand{\bL}{\mathbf{L}}
\newcommand{\bN}{\mathbf{N}}
\newcommand{\X}{\mathbf{X}}
\newcommand{\bX}{\mathbf{X}}
\newcommand{\f}{\mathbf{f}}
\newcommand{\g}{\mathbf{g}}
\newcommand{\wei}[1]{\langle #1 \rangle}
\newcommand{\End}{\mathop{\mathrm{End}}}
\newcommand{\sgn}{\mathop{\rm sgn}}
\newcommand{\supp}{\mathop{\rm supp}}
\newcommand\norm[1]{\left\| #1\right\|}
\newcommand\Norm[1]{\Big\| #1\Big\|}
\newcommand\at[1]{\vert\sb{#1}}
\newcommand{\tr}{\mathop{\rm tr}}
\newcommand{\bmupalpha}{\bm\upalpha}
\newcommand{\bmupbeta}{\bm\upbeta}
\renewcommand{\Im}{\mathop{\rm Im}}
\renewcommand{\Re}{\mathop{\rm Re}}
\newcommand{\F}{\mathbf{F}}
\newcommand{\G}{\mathbf{G}}
\def\ve{\varepsilon}
\newtheorem{theorem}{Theorem}[section]
\newtheorem{lemma}[theorem]{Lemma}
\newtheorem{proposition}[theorem]{Proposition}
\newtheorem{remark}[theorem]{Remark}
\newtheorem{assumption}[theorem]{Assumption}
\newtheorem{definition}[theorem]{Definition}
\newtheorem{corollary}[theorem]{Corollary}
\newtheorem{claim}[theorem]{Claim}
\newcommand{\abs}[1]{\lvert#1\rvert}
\newcommand{\Abs}[1]{\left\lvert#1\right\rvert}
\newcommand{\om}{\omega}
\newcommand{\rone}{\mathbb{R}^1}
\newcommand{\ck}{\mathcal K}
\newcommand{\p}{\partial}
\newcommand{\beq}{\begin{equation}}
\newcommand{\eeq}{\end{equation}}
\newcommand{\beqna}{\begin{eqnarray*}}
\newcommand{\eeqna}{\end{eqnarray*}}
\newcommand{\beqn}{\begin{equation*}}
\newcommand{\eeqn}{\end{equation*}}
\newcommand{\bp}{\begin{proof}}
\newcommand{\bprop}{\begin{proposition}}
\newcommand{\eprop}{\end{proposition}}
\newcommand{\bt}{\begin{theorem}}
\newcommand{\et}{\end{theorem}}
\newcommand{\bex}{\begin{Example}}
\newcommand{\eex}{\end{Example}}
\newcommand{\bc}{\begin{corollary}}
\newcommand{\ec}{\end{corollary}}
\newcommand{\bcl}{\begin{claim}}
\newcommand{\ecl}{\end{claim}}
\newcommand{\bl}{\begin{lemma}}
\newcommand{\el}{\end{lemma}}
\begin{document}

\title{Asymptotic stability of solitary waves
in generalized Gross--Neveu model
}

\author{
{\sc Andrew Comech}
\\
{\small\it Texas A\&M University, College Station, TX 77843, USA
and IITP, Moscow 101447, Russia}
\\~\\
{\sc Tuoc Van Phan}
\\
{\small\it
Department of Mathematics, University of Tennessee, Knoxville, TN 37996, USA}
\\~\\
{\sc Atanas Stefanov\footnote{Stefanov is supported in part by NSF-DMS under contract $\#$ 1313107.}}
\\
{\small\it Department of Mathematics, University of Kansas, Lawrence, KS 66045, USA}
}

\date{July 3, 2014}

\maketitle

\begin{abstract}
For the nonlinear Dirac equation in (1+1)D
with scalar self-interaction (Gross--Neveu model),
with quintic and higher order nonlinearities
(and within certain range of the parameters),
we prove that solitary wave solutions are asymptotically stable
in the ``even'' subspace of perturbations
(to ignore translations and eigenvalues $\pm 2\omega i$).
The asymptotic stability is proved for initial data in $H^1$.
The approach is based on the spectral information
about the linearization at solitary waves
which we justify by numerical simulations.
For the proof,
we develop the spectral theory for the linearized operators
and obtain appropriate estimates
in mixed Lebesgue spaces, with and without weights.
\end{abstract}

\section{Introduction}

Models of self-interacting spinor fields have been
appearing in particle physics for many years
\cite{jetp.8.260,PhysRev.83.326,PhysRev.103.1571,
RevModPhys.29.269}.
The most common examples
of nonlinear Dirac equation
are the massive Thirring model \cite{MR0091788}
(vector self-interaction)
and the Soler model \cite{PhysRevD.1.2766}
(scalar self-interaction).
The (1+1)D analogue of the latter model
is widely known as the massive Gross--Neveu model \cite{PhysRevD.12.3880}.
In the present paper,
we address the asymptotic stability of solitary waves in this model.
We require that the nonlinearity in the equation
vanishes of order at least five;
the common case of cubic nonlinearity seems out of reach with the current technology;
there is a similar situation with other popular dispersive models
in one spatial dimension,
such as the Schr\"odinger and Klein-Gordon equations (see 
\cite{MR1221351,MR1199635, MR2511047, MR2422523,MR2968226}
and the references therein).

We only consider perturbations in the class of
``even'' spinors (same parity as the solitary waves under consideration).
The restriction to this subspace
allows us to ignore spatial translations
and the $\pm 2\omega i$ eigenvalues which are present
in the spectrum of the linearization at solitary waves \cite{dirac-vk}.
This paper therefore may be considered as the extension of \cite{MR2985264}
to the translation-invariant systems
(in that paper, the potential was needed to obtain the desired
spectrum of linearization at small amplitude solitary waves).

A similar result -- asymptotic stability of solitary waves
in the translation-invariant
nonlinear Dirac equation in three spatial dimensions --
is obtained in \cite{MR2924465}.
Authors base their highly technical approach
on a series of assumptions about the
spectrum of the linearizations at solitary waves;
these assumptions
can not be verified yet for a particular model.
The authors also restrict the perturbations
to a certain subspace to avoid spatial translations and
issues caused by the presence of $\pm 2\omega i$ eigenvalues
\cite{dirac-vk}
and only consider the solitary waves with $\omega>m/3$.
Contrary to \cite{MR2924465},
our results are obtained for models
for which the spectrum is known (albeit numerically);
our technical restriction is $\abs{\omega}<m/3$.

We briefly review the related research on stability of 
solitary waves in nonlinear Dirac equation.
There have been numerous approaches to this question
based on considering the energy minimization
at particular families of perturbations,
but the scientific relevance of these conclusions
has never been justified;
see the review and references
in e.g. \cite{linear-a,2014arXiv1405.5547S}.
The linear (spectral) stability
of the nonlinear Dirac equation
is still being settled.
According to
\cite{linear-a},
the linear stability properties of solitary waves
in the nonrelativistic limit
of the nonlinear Dirac equation
(solitary waves with $\omega\lesssim m$)
are similar to linear stability of nonlinear Schr\"odinger
equation;
in particular, the stability
of the \emph{ground states}
(no-node solutions)
is described by the Vakhitov--Kolokolov stability criterion
\cite{VaKo},
$\p\sb\omega Q(\omega)<0$, with
$Q(\omega)=\norm{\phi\sb\omega}_{L^2}^2$
the charge of a solitary wave.
Away from the nonrelativistic limit,
the border of the instability region
can be indicated by the conditions
$\p\sb\omega Q(\omega)=0$
or $E(\omega)=0$ (the value of the energy functional
at a solitary wave),
see \cite{2013arXiv1306.5150C}.
The instability could also develop
from the bifurcation
of the quadruple of complex eigenvalues
from the embedded thresholds $\pm i(m+\abs{\omega})$
as in \cite{PhysRevLett.80.5117},
which in particular
can take place
at the collision of thresholds
at $\lambda=\pm i m$ when $\omega=0$
as in \cite{MR1897705}.
We do not have a good criterion
when such bifurcation takes place.

Let us mention that our results are at odds
with the numerical simulations in \cite{2014arXiv1405.5547S}
which are interpreted as
instability of the cubic Gross--Neveu model ($k=1$)
for $\omega\le\omega_c\approx 0.56$,
of the quintic model ($k=2$)
for $\omega\le\omega_c\approx 0.92$,
and of the $k=3$ model for all $\omega<m$.
We expect that the observed instability is related
to the boundary effects, when certain harmonics,
instead of being dispersed, are reflected into the
bulk of the solution, where
the nonlinearity creates higher harmonics;
this process keeps repeating,
and eventually the space-time discretization
becomes insufficient.
This explanation is corroborated by the fact
that the characteristic instability times
\emph{grow almost proportionally
with the size of the domain}
(see the
instability times for the one-humped solitary wave
with $k=1$, $\omega=0.5$
in \cite[TABLE II]{2014arXiv1405.5547S}),
suggesting the link not to the linear instability
but to the boundary contribution.
Our numerics show no complex
eigenvalues away from the union
of real and imaginary axes
in the Gross--Neveu model with $1\le k\le 9$.
The presence of real eigenvalues
(as on Figure~\ref{fig-gn-3})
agrees with the Vakhitov--Kolokolov stability criterion,
$dQ(\omega)/d\omega>0$.

\medskip

The approach in our paper is standard,
being based on modulation equations,
dispersive wave decay estimates,
and the Strichartz inequalities.
Instead of explaining our approach,
we provide a detailed outline of the paper,
which will elucidate the main steps and ideas involved in the proof.
In Section~\ref{sec:1.1},
we describe the Gross--Neveu model
and formulate our main results.
In Section~\ref{sec:2}, we describe the standing wave solutions of the GN model, as well as the linearized operator around the solitary wave for the corresponding nonlinear evolution.
Here, we provide numerics, which suggest that, at least for certain range of the parameters, we have a favorable for us spectral picture:
that is, {\it the absence of unstable spectrum,
as well as the absence of marginally stable point spectrum, except at zero}.
Section~\ref{sec:4} is the most challenging from a technical point of view.
Therein, we develop the spectral theory for the linearized operator.
We use the four linearly independent Jost solutions
to construct the resolvent explicitly.
This allows us to obtain (among other things) a limiting absorption principle for the linearized operator
(Proposition~\ref{prop9}),
which is crucial for the types of estimates required to establish asymptotic stability.
(Let us mention a related result \cite{MR2857360}
on local energy decay for the Dirac equation on one dimension,
which we will also need.)
In Section~\ref{disp-section}, we use the spectral theory developed in the previous section to establish various dispersive estimates for the linearized Dirac evolution semigroup.
Namely, we establish weighted decay estimates, which in turn imply Strichartz estimates.
We also state and prove estimates between Strichartz spaces and weighted $L^\infty_t L^2$ spaces -- in all this, we have been greatly helped by the Christ--Kiselev lemma and Born expansions.
In Section~\ref{sec:pmt},
we set up the modulation equations for the residuals/radiation term.
We follow this by the fixed point argument in the appropriate spaces,
which finally shows well-posedness for small data for the equation of the residuals.

\section{Main results}
\label{sec:1.1}

The generalized Soler model
(classical fermionic field with scalar self-interaction)
corresponds to the Lagrangian density
\begin{equation}\label{gn-lagrangian}
\mathcal{L}
=\bar\psi(i\gamma\sp\mu\p\sb\mu-m)\psi
+F(\bar\psi\psi),
\qquad
\psi(x,t)\in\C^N,
\quad
x\in\R^n,
\end{equation}
where
$F\in C^\infty(\R)$, $F(0)=0$,
\begin{equation}\label{def-psi-bar}
\bar\psi
=\psi\sp\ast\gamma\sp 0,
\end{equation}
and $\gamma\sp\mu$, $0\le\mu\le n$,
are the Dirac gamma-matrices:
\[
\gamma\sp\mu\gamma\sp\nu
+\gamma\sp\nu\gamma\sp\mu
=2h\sp{\mu\nu}I_n,
\qquad
0\le \mu,\,\nu\le n,
\]
with
$h\sp{\mu\nu}=\mathop{\rm diag}[1,-1,\dots,-1]$
(the inverse of) the Minkowski metric tensor
and $I_n$ the identity matrix.
The one-dimensional analogue of \eqref{gn-lagrangian}
with $n=1$, $N=2$
is called the Gross--Neveu model.
The equation of motion corresponding to the Lagrangian \eqref{gn-lagrangian}
is then given by the following
nonlinear Dirac equation:
\begin{equation}\label{GN.eqn}
i\p_t\psi=D_m\psi-f(\psi^*\beta\psi)\beta\psi,
\qquad
\psi(x,t)\in\C^2,
\quad
x\in\R,
\end{equation}
where $f=F'$,
$\alpha=\gamma\sp 0\gamma\sp 1$, $\beta=\gamma\sp 1$,
and
$D_m=-i\alpha\frac{\p}{\p x}+\beta m$
is the Dirac operator,
with $\alpha$, $\beta$ the self-adjoint Dirac matrices
satisfying
\[
\alpha^2=\beta^2=I_2,
\qquad
\alpha\beta+\beta\alpha=0.
\]
A particular choice of the Dirac matrices is irrelevant;
for definiteness, we take
\[
\alpha =-\sigma_2= \begin{bmatrix} 0 & i \\ -i & 0 \end{bmatrix},
\qquad
\beta =\sigma_3= \begin{bmatrix} 1 & 0 \\ 0 & -1 \end{bmatrix}.
\]
Without loss of generality,
we will also assume that the mass is equal to $m=1$.
Then one has
\begin{equation}\label{def-D}
D_m
=-i\alpha\frac{\p}{\p x}+m\beta
=
\begin{bmatrix}
1&\p\sb x
\\
-\p\sb x&-1
\end{bmatrix}.
\end{equation}
The hamiltonian density
derived from the Lagrangian density
\eqref{gn-lagrangian}
is given by
\begin{equation}\label{def-energy-density}
\mathcal{E}(\psi,\dot\psi)
=\frac{\p\mathcal{L}}{\p\dot\psi}\dot\psi-\mathcal{L}.
\end{equation}
The value of the energy functional
\begin{equation}\label{def-energy}
E(\psi)
=\int\sb{\R}\mathcal{E}(\psi,\dot\psi)\,dx
\end{equation}
is (formally) conserved for the solutions to \eqref{GN.eqn}.
Due to the $\mathbf{U}(1)$-invariance
of the Lagrangian \eqref{gn-lagrangian},
the total charge of the solutions to \eqref{GN.eqn},
\begin{equation}\label{def-charge}
Q(\psi)=\int\sb{\R}\psi\sp\ast(x,t)\psi(x,t)\,dx,
\end{equation}
is also (formally) conserved.


Let
\begin{equation}\label{def-x}
X=
\big\{
\phi\in L^2(\R,\C^2)
;\;
\quad \phi_1(x)=\phi_1(-x),
\ \ \phi_2(x)=-\phi_2(-x)
\,\big\}.
\end{equation}

\begin{assumption}\label{ass-1}
Assume that $f\in C^\infty(\R)$
is such that
$f(s)=\mathcal{O}(s^k)$ for $s\in[0,1]$,
with $k\ge 2$,
and that there is an open interval
$\varOmega$,
\[
\varOmega\subset\Big(-\frac 1 3,\ \frac 1 3\Big),
\]
such that the following takes place:
\begin{enumerate}
\item
For each $\omega\in\varOmega$,
there are solitary wave solutions
$\psi\sb\omega(x,t)=\phi\sb\omega(x)e^{-i\omega t}$,
$\phi\sb\omega\in H^1(\R,\C^2)$,
to \eqref{GN.eqn},
with the map
$\varOmega\to H^1$,
$\omega\mapsto \phi\sb\omega$
being $C^2$.
\item
Non-degeneracy:
\[
\p\sb\omega
Q(\omega)\ne 0,
\qquad
\omega\in\varOmega.
\]
Here
$Q(\omega)$
is the value
of the charge functional \eqref{def-charge}
evaluated at the solitary wave $\phi\sb\omega(x)e^{-i\omega t}$.
\item
The linearization of \eqref{GN.eqn}
at a solitary wave with $\omega\in\varOmega$
has no eigenvalues with nonzero real part
and no purely imaginary eigenvalues $\lambda\in i\R$
with eigenfunctions from $X$
(of the same parity as $\phi\sb\omega$),
and no resonances
at $\lambda=1\pm\abs{\omega}$
with generalized eigenfunctions
of the same parity as $\phi\sb\omega$.
\item
For $\omega\in\varOmega$,
the Evans function
$E(\lambda,\omega)$
of the linearization operator
does not vanish at $\lambda\in i\R$
with $\abs{\lambda}\ge 1-\abs{\omega}$.
\end{enumerate}
\end{assumption}

The following theorem is the main result of our paper.

\begin{theorem}[Asymptotic stability
of solitary waves in nonlinear Dirac equation]
\label{theorem-main}
Assume that Assumption~\ref{ass-1} holds.
Let $\omega_0 \in\varOmega$
and $\phi_{\omega_0}(x)e^{-i\omega\sb 0 t}$
be the corresponding solitary wave
with $\phi_{\omega_0}\in X\cap H^1(\R,\C^2)$.
There exist $\epsilon >0$ and $C<\infty$ such that if
$\psi_0\in X$ satisfies
\[
\inf_{\gamma \in [0,2\pi]}\norm{\psi_0-e^{i\gamma}\phi_{\omega_0}}_{H^1}
\leq \epsilon^2,
\]
then the solution $\psi$ of \eqref{GN.eqn}
with $\psi\at{t=0}=\psi_0$
exists globally in time and satisfies the estimate
\[
\lim_{t \rightarrow \infty}\norm{\psi(\cdot, t)-e^{-i\theta(t)}\phi_{\omega_\infty}(\cdot)-
e^{-i D_m t}h_{\infty}(\cdot)}_{H^1} =0,
\]
for some $\omega_\infty \in\varOmega$,
$\theta \in C^1(\R, \R)$ and $h_\infty \in H^1(\R,\C^2)$,
with $\norm{h_\infty}_{H^1} \leq C\epsilon$
and $|\omega_0-\omega_\infty| \leq C \epsilon$.
\end{theorem}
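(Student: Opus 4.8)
The plan is to run the standard modulation-plus-dispersion scheme, taking as black boxes the spectral theory of Section~\ref{sec:4} (the resolvent construction from the four Jost solutions and the limiting absorption principle, Proposition~\ref{prop9}) and the linearized dispersive estimates of Section~\ref{disp-section}. For $\psi(\cdot,t)\in X$ in a small $H^1$-tube around the soliton orbit $\{e^{i\gamma}\phi_\omega:\gamma\in[0,2\pi),\ \omega\in\varOmega\}$, I would write
\[
\psi(x,t)=e^{-i\Theta(t)}\bigl(\phi_{\omega(t)}(x)+r(x,t)\bigr),
\qquad
\Theta(t)=\gamma(t)+\int_0^t\omega(s)\,ds,
\]
and fix the two modulation parameters $(\omega(t),\gamma(t))$ by imposing on $r(t)$ the symplectic orthogonality conditions that place it in the continuous spectral subspace of $\bL_{\omega(t)}$, i.e.\ $P_c(\omega(t))r(t)=r(t)$. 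Inside $X$ the translation mode and the $\pm 2i\omega$ eigenmodes are absent, and $|\omega|<\frac13$ forces $2|\omega|<1-|\omega|$, so the generalized null space of $\bL_{\omega}$ in $X$ is exactly two-dimensional, spanned by $\phi_\omega$ and $\p_\omega\phi_\omega$; by Assumption~\ref{ass-1}(iii)--(iv) the remaining spectrum in $X$ is a clean continuous spectrum $\{i\xi:|\xi|\ge 1-|\omega|\}$, with no embedded eigenvalues and no threshold resonances of that parity. The non-degeneracy $\p_\omega Q(\omega)\ne 0$ makes the relevant Gram matrix invertible, so the implicit function theorem yields this decomposition uniquely, with $\omega(t)\in\varOmega$, as long as $\|r(t)\|_{L^2}$ stays small.

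Differentiating the orthogonality conditions along the flow produces the modulation equations; the terms \emph{linear} in $r$ cancel by the choice of orthogonality, leaving $|\dot\omega(t)|+|\dot\gamma(t)|\lesssim\|\wei{x}^{-\sigma}r(t)\|_{L^2}^2+(\text{higher order})$. In parallel $r$ satisfies
\[
i\p_t r=(D_m-\dot\Theta(t))\,r+V(t)\,r+\mathbf{N}(r)+\mathbf{M}(t),
\]
where $V(t)$ is the exponentially localized (in $x$) matrix potential obtained by linearizing $-N$ at $\phi_{\omega(t)}$, $\mathbf{M}(t)$ gathers the localized, quadratically small modulation sources proportional to $\dot\omega\,\p_\omega\phi_\omega$ and to $\dot\gamma\,\phi_\omega$, and $\mathbf{N}(r)$ is the remaining nonlinearity: its localized part has coefficients built from $\phi_{\omega(t)}$ and is quadratic and higher in $r$, while its non-localized part is $N(\psi)$ stripped of all soliton factors and hence vanishes to order $2k+1\ge 5$ in $r$. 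Freezing the coefficient via $\bL_{\omega(t)}=\bL_{\omega_0}+\bigl(\bL_{\omega(t)}-\bL_{\omega_0}\bigr)$, with the difference a small localized perturbation absorbed into the source, and using the Duhamel formula for $e^{t\bL_{\omega_0}}$ restricted to the continuous subspace, the problem becomes a fixed point for $r$.

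The core of the proof is that fixed point, in a norm $\mathcal X$ for $r$ built from $L^\infty_t H^1_x$, the Dirac Strichartz norms (from $\|e^{t\bL_{\omega_0}}P_c\|_{L^1\to L^\infty}\lesssim\wei t^{-1/2}$ by a $T T^\ast$ argument), and the weighted norms $\|\wei{x}^{-\sigma}r\|_{L^1_t H^1_x}$, $\|\wei{x}^{-\sigma}r\|_{L^2_t H^1_x}$, $\|\wei{x}^{-\sigma}r\|_{L^\infty_t L^2_x}$ (from the limiting absorption principle and the $\wei t^{-3/2}$ local decay for the one-dimensional Dirac evolution, cf.\ \cite{MR2857360} --- where the absence of threshold resonances, Assumption~\ref{ass-1}(iii), is what upgrades the local decay to the integrable rate), the last one tied to the Strichartz norms through the Christ--Kiselev lemma and Born-series expansions as in Section~\ref{disp-section}. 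In these norms the quintic-and-higher non-localized part of $\mathbf{N}(r)$ closes because, with the right Strichartz exponents, the time-integrals of the fivefold-and-higher products converge once $k\ge 2$; the low-order localized parts of $\mathbf{N}(r)$ and all of $\mathbf{M}$ close because the spatial weights are absorbed by the localized coefficients and the missing powers of $r$ are supplied by the weighted norms; and $\mathbf{M}$, being quadratic in the localized part of $r$, is in particular $L^1$ in time. A contraction on a ball of radius $\sim\epsilon$, with data of size $\epsilon^2$, then gives global existence of $r$ and the bounds $\|r\|_{\mathcal X}\lesssim\epsilon$, $|\omega_0-\omega_\infty|\lesssim\epsilon$, $\|h_\infty\|_{H^1}\lesssim\epsilon$. \textbf{I expect this bootstrap to be the principal obstacle}: the weight $\sigma$, the Strichartz exponents and the $H^1$-level bookkeeping must be tuned so that every term closes at once, while the time-dependence of $\bL_{\omega(t)}$ and the two-way coupling between the $r$-equation and the modulation equations are handled without circularity --- and this is exactly where $2k+1\ge 5$ is used and where the cubic case $k=1$ fails.

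Finally the uniform bounds give the asymptotics. Because the modulation right-hand sides are quadratic in the localized part of $r$, hence $L^1$ in time, we get $\omega(t)\to\omega_\infty$ with $|\omega_0-\omega_\infty|$ small (so $\omega_\infty\in\varOmega$, $\varOmega$ being open) and $\gamma(t)$ convergent; thus $\phi_{\omega(t)}\to\phi_{\omega_\infty}$ in $H^1$, and with $\theta(t):=\Theta(t)$ the modulated soliton equals $e^{-i\theta(t)}\phi_{\omega_\infty}+o_{H^1}(1)$. For the radiation put $\zeta(t):=e^{-i\Theta(t)}r(t)$; a direct computation from the equation for $r$ gives
\[
\frac{d}{dt}\bigl(e^{iD_m t}\zeta(t)\bigr)
=-i\,e^{iD_m t}\Bigl(V(t)\,\zeta(t)+e^{-i\Theta(t)}\bigl(\mathbf{N}(r(t))+\mathbf{M}(t)\bigr)\Bigr),
\]
whose right-hand side lies in $L^1_t H^1_x$ (the localized potential term by $\|\wei{x}^{-\sigma}r\|_{L^1_t H^1_x}<\infty$, the quintic term by the Strichartz/weighted bounds, the modulation term because $\dot\omega,\dot\gamma\in L^1_t$). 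Hence $e^{iD_m t}\zeta(t)$ converges in $H^1$ to some $h_\infty$ with $\|h_\infty\|_{H^1}=\lim_t\|r(t)\|_{H^1}\lesssim\epsilon$, and since $e^{-iD_m t}$ is unitary on $H^1$ this reads $e^{-i\Theta(t)}r(t)=e^{-iD_m t}h_\infty+o_{H^1}(1)$. Adding the two contributions gives $\|\psi(\cdot,t)-e^{-i\theta(t)}\phi_{\omega_\infty}-e^{-iD_m t}h_\infty\|_{H^1}\to 0$, which is the claim.
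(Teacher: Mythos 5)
Your proposal follows essentially the same route as the paper: modulation ansatz with symplectic orthogonality placing the radiation in $\bX_c(\omega(t))$, freezing the linearization at the reference frequency $\omega_0$ (the paper does this by defining $Z = P_c(\omega_0)R$ and passing the error $(P_d(\omega)-P_d(\omega_0))R$ to the source), and closing a bootstrap in a mixed norm built from $L^\infty_t H^1_x$, Strichartz norms, and weighted local-energy-decay norms, with $k\ge 2$ supplying the needed convergence of the time-integrals. Two small discrepancies are worth flagging. First, your claim that the Strichartz estimates come ``from $\|e^{t\bJ\bL}P_c\|_{L^1\to L^\infty}\lesssim\langle t\rangle^{-1/2}$ by a $TT^*$ argument'' is an oversimplification: the one-dimensional Klein--Gordon flow underlying $e^{-itD_m}$ does not satisfy a clean $L^1\to L^\infty$ decay without regularity losses; the paper instead invokes the Nakamura--Ozawa estimates $\|e^{-itD_m}f\|_{L^q_t W^{s,r}_x}\lesssim\|f\|_{H^{s+\frac12+\frac1q-\frac1r+\epsilon}}$ and a low-frequency refinement, and transfers them to the perturbed evolution via Born expansion and Christ--Kiselev. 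Second, you do not address how the time-dependent phase $\alpha(t)\bJ Z$ in the frozen equation is controlled without circularity; the paper handles this through an auxiliary operator $\bL_\nu$ with a fixed small decaying potential (free of eigenvalues and threshold resonances, so that the $\langle t\rangle^{-3/2}$ local decay of \cite{MR2857360} applies), conjugating by $U(t)=e^{\beta(t)\bJ}$ and inverting $I-T_0$ on $L^2_t H^k_x$ --- this is Lemma~\ref{X.est}. Conversely, your final step --- showing that $e^{iD_mt}\zeta(t)$ is Cauchy in $H^1$ because the Duhamel integrand is $L^1_tH^1_x$ --- is actually more explicit than what the paper records, which only verifies $\|\rho(t)\|_{L^\infty_x}\to 0$ and then asserts the $H^1$-scattering conclusion; your version fills that gap.
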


\begin{remark}
The precise structure of the nonlinearity
of the Gross--Neveu model,
$f(\psi\sp\ast\beta\psi)\beta\psi$,
does not play any particular role in
our considerations.
In fact, because of the eigenvalues $\pm 2\omega i$
of the linearized operator,
which are specific for this model
(to avoid the associated problems,
we need to restrict to $\abs{\omega}<1/3$
and only consider perturbations from $X$).
Yet, we choose this model
since it is the focus of many other recent papers.
\end{remark}

\begin{remark}
Assumption~\ref{ass-1}
is satisfied, for example,

\begin{enumerate}
\item
For the Gross--Neveu model
with $f(s)=s^2$ and
$\varOmega=(0.23,0.33)$
(see Fig.~\ref{fig-gn-2});
\item
For the Gross--Neveu model
with $f(s)=s^3$ and
$\varOmega=(0.14,0.33)$
(see Fig.~\ref{fig-gn-3}).
\end{enumerate}
We also mention
that in the Gross--Neveu model with $k=1,\,2,\,\dots,\,9$,
we found no complex eigenvalues
for the linearizations at solitary waves with
$\omega=0.1,\,0.2,\,\dots,\,0.9$
in the domain
$0.0008<\abs{\Re\lambda}<0.59$,
$\abs{\Im\lambda}<2.5$.
Moreover,
according to \cite{linear-a},
the bifurcations of point eigenvalues off the imaginary axis
could result only from the collision
of purely imaginary eigenvalues
or from eigenvalues embedded into the continuous spectrum,
and also from resonances
at the embedded thresholds, $\lambda=\pm i(m+\abs{\omega})$
(in one-dimensional case, the resonances correspond to
the generalized, $L^\infty$ eigenfunctions).
Our numerics show that there are no resonances
at the embedded thresholds
in the Gross--Neveu model with $k=2$ and $k=3$
for all $\omega\in(0,m)$,
justifying the observed absence
of complex eigenvalues away from $\R\cup i\R$.
\end{remark}

\begin{remark}
The solitary waves
to classical Gross--Neveu model ($k=1$, cubic nonlinearity)
are known to be linearly stable
\cite{MR2892774}
but our argument does not apply to this situation.
\end{remark}

\begin{remark}
The assumption $k\ge 2$
allows us to close the argument
in Section~\ref{sect-further-linear-estimates}
using the Strichartz estimates,
making the argument sufficiently compact.
Similar requirements
on the order of vanishing of the nonlinearity
being sufficiently high
are common in the research
on asymptotic stability
of solitons in nonlinear Schr\"odinger equation,
starting with the seminal papers
\cite{MR1221351,MR1199635}.
\end{remark}

\begin{figure}
\begin{center}
\includegraphics[width=10cm,height=7cm]{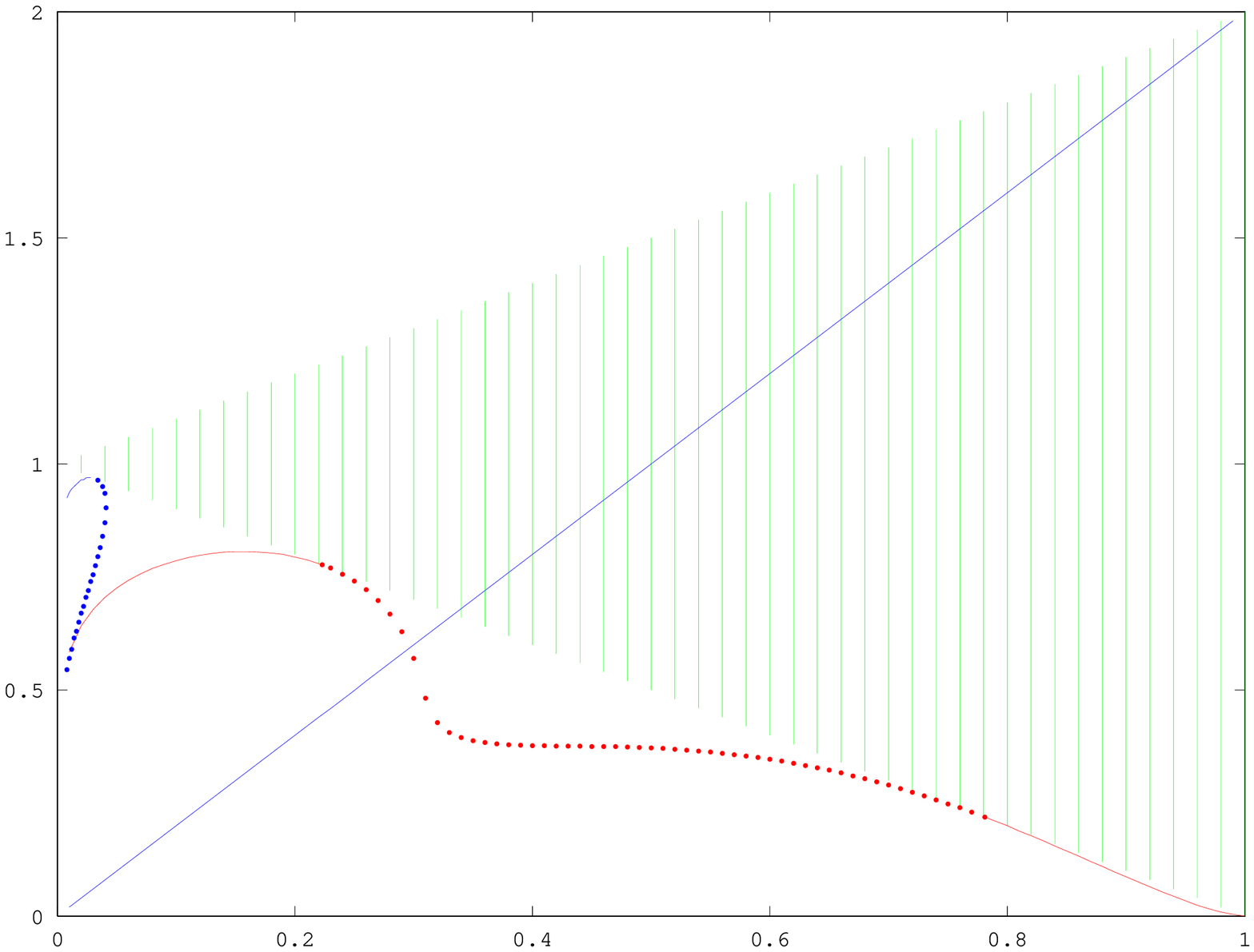}
\caption{Gross--Neveu model, $k=2$ (the quintic case).
Linearization at a solitary wave.
Horizontal axis: $\omega\in(0,1)$.
Vertical axis: spectrum on the upper half of the
imaginary axis.
Solid vertical (green) lines: part of the continuous spectrum
between the threshold $i(1-|\omega|)$ and the embedded threshold $i(1+|\omega|)$.
Solid red curves: eigenvalues
with eigenfunctions from $\bX$
(of the same parity as $\phi\sb\omega$; see \eqref{def-xx}),
which we can not ignore; our result
holds in the regions where such eigenvalues are absent.
Solid blue curve (near $\omega=0$ and $\lambda=i$)
and the line $\lambda=2\omega i$ denote
eigenvalues
with eigenfunctions from $\bX\sp\perp$
(see \eqref{def-xx-perp}),
which remain orthogonal to our perturbation.
\hfill
\break
Dotted red and blue curves: antibound states
of different parity (from $\bX$ and $\bX\sp\perp$);
we do not mention them in the argument.
Antibound states correspond to
zeros of Evans functions on the ``wrong'' Riemann sheet,
which corresponds to generalized eigenfunctions
with exponential growth at infinity.
}
\label{fig-gn-2}

\bigskip
\bigskip

\includegraphics[width=10cm,height=7cm]{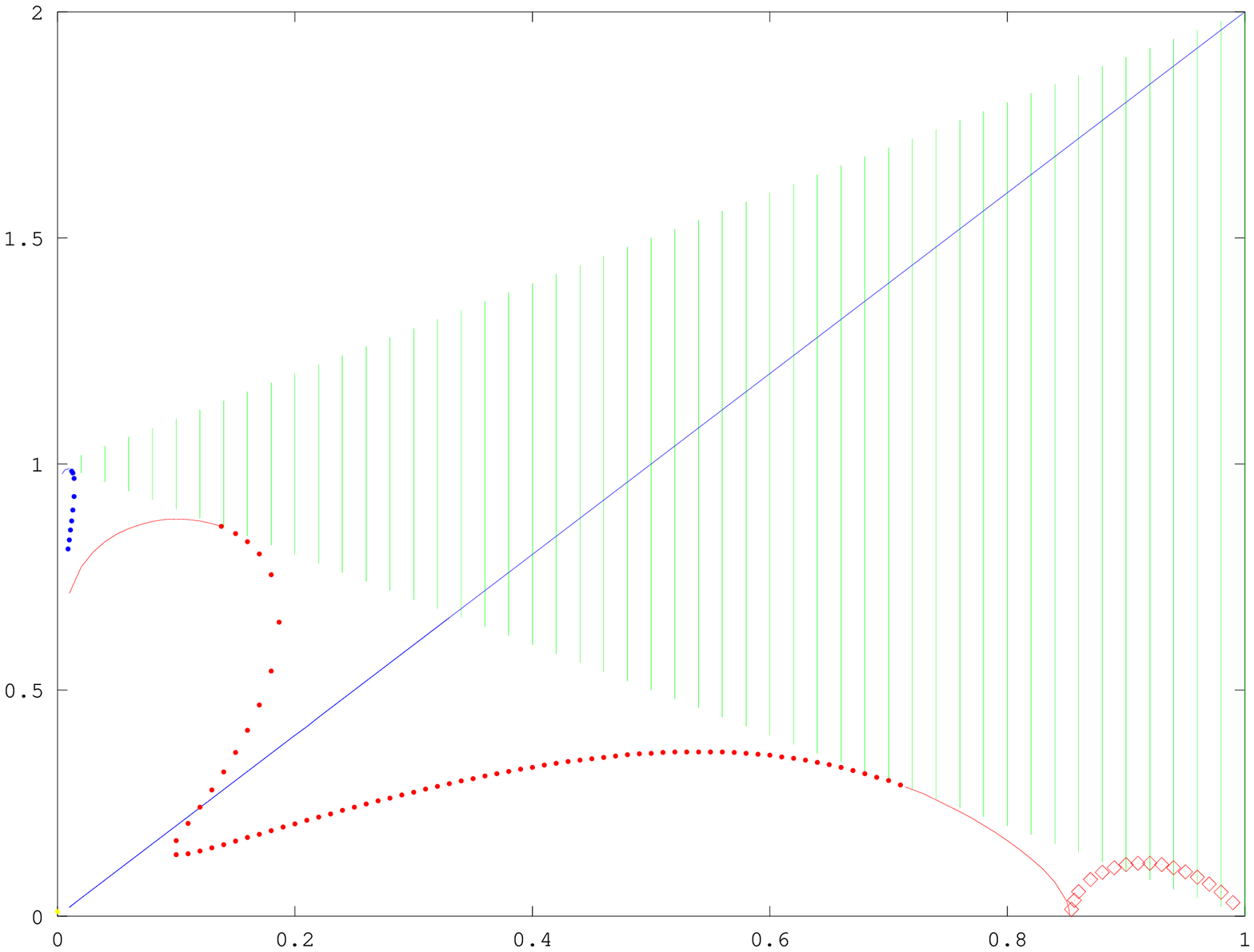}
\caption{Gross--Neveu model, $k=3$.
Hollow red diamonds (on bottom right)
denote positive eigenvalues (thus linear
instability) present in the spectrum
for $\omega\in(0.85,1)$.
These eigenvalues are overimposed on the imaginary axis.
Theorem~\ref{theorem-main} on asymptotic stability
applies for solitary waves with $\omega$ such that
there are neither
hollow red diamonds (linear instability)
nor
solid red curves
(purely imaginary eigenvalues
with eigenfunctions from $\bX$)
in the spectrum.
Note that the dotted kink
indicates collision of antibound states at $\omega_b\approx 0.1$
on the imaginary axis and their bifurcation
off the imaginary axis for $\omega<\omega_b$.
(Location of  these values of $\lambda$
off the imaginary axis does not lead to instability
since the corresponding antibound states have infinite $L^2$-norm.)
}
\label{fig-gn-3}
\end{center}
\end{figure}


\begin{remark}
We discuss
the necessary and sufficient conditions
for the existence and properties of solitary waves
in the generalized Gross--Neveu model
in Section~\ref{sect-sw-gn}.
\end{remark}

\begin{remark}
By \cite{dirac-vk,2013arXiv1306.5150C},
the assumptions
$E(\omega)\ne 0$
and $\p\sb\omega Q(\omega)\ne 0$
guarantee that the generalized null space
of the linearization operator
is (exactly) four-dimensional.
(Above, $E(\omega)$ and $Q(\omega)$
are the values of the energy and charge functionals
\eqref{def-energy}
\eqref{def-charge}
at the solitary wave $\phi\sb\omega e^{-i\omega t}$.)
We do not need to impose the condition
$E(\omega)\ne 0$
since although the vanishing of $E(\omega)$
leads to the increase of the Jordan block
of the linearization operator,
this increase is absent when we restrict
the operator to the subspace $X$.
\end{remark}

\begin{remark}
The restriction of the linearized
operator to $X$
requires some details.
Since this operator is $\R$-linear but not $\C$-linear,
by its restriction to $X$
we imply its restriction
onto $\bX=\C\otimes\sb{\R}(\Re X\times \Im X)$;
see \eqref{def-xx} below.
More details are in Section~\ref{sect-linearization}.
\end{remark}

\begin{remark}
In Assumption~\ref{ass-1}, we require that
$\varOmega\subset(-1/3,1/3)$
to avoid the situation when the eigenvalues
$\pm 2\omega i$
(see Remark~\ref{remark-2-omega-i} below)
become embedded
into the essential spectrum
($\lambda\in i\R$, $\abs{\lambda}\ge 1-\abs{\omega}$).
In that case, our construction
of the resolvent in Section~\ref{sect-resolvent}
does not allow to
obtain the necessary estimates.
Yet, the restriction to $\abs{\omega}<1/3$
seems to be merely technical;
we still expect that
for $1/3\le \abs{\omega}<1$,
the resolvent
of the linearized operator
restricted to $X$
has the same properties as stated
in Proposition~\ref{prop9}
even in the vicinity of the embedded eigenvalues $\pm 2\omega i$
and that the asymptotic stability could be proved.
\end{remark}

\begin{remark}
We expect that the Evans function
never has zeros at $\lambda\in i\R$,
$\abs{\Im\lambda}\ge 1+\abs{\omega}$,
but could not prove this.
Instead, we check this assumption
numerically;
all the zeros of the Evans function
which we found are plotted
as solid curves
on Figures~\ref{fig-gn-2} and~\ref{fig-gn-3}
(these zeros correspond to the point eigenvalues
of the linearized operator).
The absence of zeros of the Evans function
for $\lambda\to \pm i\infty$ follows from
Lemma~\ref{lemma-large-lambda}.
\end{remark}

\begin{remark}
Let us summarize that
most of our assumptions are technical;
the only essential assumption is
that the spectrum of the linearized operator
has no eigenvalues in the right half-plane
and that the Jordan block of $\lambda=0$
is (exactly) four-dimensional.
We expect that the presence of purely imaginary eigenvalues
does not lead to instability
unless these eigenvalues are of higher algebraic multiplicity.
More generally, we expect that,
similarly to the case of the
nonlinear Schr\"odinger equation and similar systems,
the (dynamic) instability
takes place when either there is a linear instability
or when the eigenvalues on the imaginary axis
are of higher algebraic multiplicities
(when we are at the threshold of linear instability).
\end{remark}

\section{Solitary waves in generalized Gross--Neveu model}
\label{sec:2}

\subsection{Properties of solitary waves}
\label{sect-sw-gn}

Equation \eqref{GN.eqn} can be written explicitly as
\begin{equation}\label{Dirac.ex}
\left \{
\begin{array}{ll}
i \p_t \psi_1 & = \p_x \psi_2 + \psi_1-f(|\psi_1|^2-|\psi_2|^2)\psi_1, \\
i \p_t\psi_2 & =-\p_x\psi_1-\psi_2 + f(|\psi_1|^2-|\psi_2|^2) \psi_2.
\end{array} \right.
\end{equation}
In the abstract form, we write \eqref{GN.eqn} as
\begin{equation}\label{N.eqn.gn}
i\p_t \psi = D_m \psi + \bN(\psi),
\end{equation}
with the Dirac operator
\[
D_m
=-i\alpha\p\sb x+\beta
=\begin{bmatrix}
1 & \p_x \\ -\p_x & -1
\end{bmatrix}
\]
and the nonlinearity
\begin{equation}\label{def-n-gn}
\bN(\psi) =
\begin{bmatrix}
-f(|\psi_1|^2-|\psi_2|^2)&0
\\0&f(|\psi_1|^2-|\psi_2|^2)
\end{bmatrix} \psi.
\end{equation}

\begin{definition}\label{def-sw}
Solitary waves are solutions of the form
\begin{equation}\label{sw}
\psi\sb\omega(x,t)
=\phi_{\omega}(x)e^{-i\omega t},
\qquad\phi_{\omega}\in H^1(\R,\C^2),
\qquad\omega\in\R.
\end{equation}
\end{definition}

Substituting this Ansatz into \eqref{N.eqn.gn},
we see that $\phi_\omega$ solves
\begin{equation}\label{phi.eqn.gn}
\omega\phi_\omega
=
D_m\phi_\omega + \bN(\phi_\omega).
\end{equation}
The existence of solitary waves follows
from \cite{MR847126,MR2892774}:

\begin{proposition}\label{Solitons}
Let $F(s)$ be the antiderivative of $f(s)$ such that $F(0)=0$.
Assume that for given
$\omega \in\R$, $0 < \omega < 1$, there exists
$\varGamma_\omega > 0$ such that
\[
\omega \varGamma_\omega = \varGamma_\omega-F(\varGamma_\omega),
\quad \omega \ne 1-f(\varGamma_\omega), \qquad
\omega s < s-F(s), \qquad \text{for} \quad s \in (0, \varGamma_\omega).
\]
Then there is a solitary wave solution
$\psi\sb\omega(x,t) = \phi_\omega(x) e^{-i\omega t}$, where
\begin{equation}\label{sw1}
\phi_\omega(x) = \begin{bmatrix}
v(x,\omega) \\ u(x,\omega)
\end{bmatrix}, \qquad v(\cdot,\omega),\,u(\cdot,\omega)\in H^1(\R).
\end{equation}
This solution is unique if we require that
$v$, $u$ are real-valued, $v$ even and positive, and $u$ odd.
Both
$v$ and $u$ are exponentially decaying as $|x|\to\infty$
and satisfy
$\abs{u(x,\omega)}<\abs{v(x,\omega)}$, $x\in\R$.

Moreover, there is $c\sb\omega<\infty$
such that
\begin{equation}\label{phi-bounded.gn}
\abs{\phi\sb\omega(x)}
\le c\sb\omega e^{-\updelta_\omega\abs{x}},
\qquad
x\in\R,
\end{equation}
where
\begin{equation}\label{def-up-delta-omega}
\updelta_\omega
=\sqrt{1-\omega^2}.
\end{equation}
Similarly, there is $c\sb\omega<\infty$
such that
\begin{equation}\label{phi-bounded.gn-2}
\abs{\p\sb\omega\phi\sb\omega(x)}
\le c\sb\omega\langle x\rangle e^{-\updelta_\omega\abs{x}},
\qquad
\abs{\p\sb\omega^2\phi\sb\omega(x)}
\le c\sb\omega\langle x\rangle^2 e^{-\updelta_\omega\abs{x}},
\qquad
x\in\R.
\end{equation}
\end{proposition}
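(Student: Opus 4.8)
The plan is to reduce the profile equation \eqref{phi.eqn.gn} to an autonomous planar ODE, quote \cite{MR847126,MR2892774} for the existence and uniqueness of the relevant homoclinic orbit, and then extract the pointwise bounds \eqref{phi-bounded.gn}--\eqref{phi-bounded.gn-2} from a hyperbolic saddle analysis at the origin together with a bootstrap that removes any loss in the exponential rate.

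First I would write $\phi_\omega=\begin{bmatrix}v\\u\end{bmatrix}$ with $v$ real even and $u$ real odd, so that \eqref{phi.eqn.gn} becomes the first-order system $u'=(\omega-1+f(v^2-u^2))v$, $v'=-(\omega+1-f(v^2-u^2))u$. This system is Hamiltonian with conserved energy $H(u,v)=\tfrac{\omega+1}{2}u^2+\tfrac{\omega-1}{2}v^2+\tfrac12F(v^2-u^2)$, and a solitary wave profile is a nonconstant orbit homoclinic to $(0,0)$, hence contained in the level set $H=0$. On that level set one computes $v^2+u^2=(s-F(s))/\omega$ with $s:=v^2-u^2$, so the displayed hypotheses --- $\omega s<s-F(s)$ on $(0,\varGamma_\omega)$ and $\omega\varGamma_\omega=\varGamma_\omega-F(\varGamma_\omega)$ --- are precisely what forces $s>0$ along the orbit (equivalently $\abs{u}<\abs{v}$), with a turning point $u=0$ at $s=\varGamma_\omega$; existence, uniqueness under the stated normalization, and smooth dependence on $\omega$ of the orbit are the content of \cite{MR847126,MR2892774}.

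Next, for \eqref{phi-bounded.gn}: since $f(s)=\mathcal O(s^k)$ with $k\ge2$ we have $f(0)=0$, so the linearization of the ODE system at $(0,0)$ is $\Phi'=A\Phi$ with $A=\begin{bmatrix}0&\omega-1\\-(\omega+1)&0\end{bmatrix}$, whose eigenvalues are $\pm\updelta_\omega=\pm\sqrt{1-\omega^2}$. Thus $(0,0)$ is a hyperbolic saddle, the profile lies on its stable manifold as $x\to+\infty$ and on its unstable manifold as $x\to-\infty$, and the stable manifold theorem already gives $\abs{\phi_\omega(x)}\le c_{\omega,\epsilon}e^{-(\updelta_\omega-\epsilon)\abs{x}}$ for every $\epsilon>0$. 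To reach the sharp rate I would write the system as $\Phi'=A\Phi+g(\Phi)$ with $\abs{g(\Phi)}\lesssim\abs{\Phi}^{2k+1}$ near the origin, diagonalize $A$, and use the variation-of-parameters representation with the coefficient of the growing mode set to zero --- legitimate precisely because $\Phi(x)\to0$. Choosing $\epsilon$ so small that $(2k+1)(\updelta_\omega-\epsilon)>\updelta_\omega$, which is possible since $2k+1\ge5$, the source $g(\Phi(x))$ decays strictly faster than $e^{-\updelta_\omega\abs{x}}$, so the Duhamel integrals reproduce the rate $\updelta_\omega$ with no polynomial factor, giving \eqref{phi-bounded.gn}.

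Finally, for the $\omega$-derivatives: Assumption~\ref{ass-1}\,(i) makes $\omega\mapsto\phi_\omega$ of class $C^2$ into $H^1$, so $\p_\omega\phi_\omega$ and $\p_\omega^2\phi_\omega$ lie in $H^1$ and hence decay to $0$ at infinity. Differentiating the ODE system once in $\omega$ gives a \emph{linear} inhomogeneous system $\p_\omega\Phi'=(A+V(x))\p_\omega\Phi+R(x)$, where $V(x)=\mathcal O(\abs{\phi_\omega(x)}^{2k})$ decays exponentially and $R(x)$ is a linear combination of the components of $\phi_\omega(x)$, so $\abs{R(x)}\lesssim e^{-\updelta_\omega\abs{x}}$ by the previous step. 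Since $V$ decays, a fundamental system for $\Phi'=(A+V)\Phi$ is still comparable to $\{e^{\updelta_\omega x},e^{-\updelta_\omega x}\}$ (one of its members being $\p_x\phi_\omega$), so variation of parameters --- again projecting off the growing mode, as $\p_\omega\phi_\omega\to0$ permits --- and convolving the rate-$\updelta_\omega$ decaying solution against a source decaying at the very same rate produces the resonant factor $\langle x\rangle$, yielding $\abs{\p_\omega\phi_\omega(x)}\le c_\omega\langle x\rangle e^{-\updelta_\omega\abs{x}}$. Differentiating a second time, the source for $\p_\omega^2\Phi$ is built from $\phi_\omega$ and $\p_\omega\phi_\omega$, hence decays like $\langle x\rangle e^{-\updelta_\omega\abs{x}}$, and one more convolution picks up a further power of $\langle x\rangle$, which is \eqref{phi-bounded.gn-2}. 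The main obstacle throughout is keeping these estimates sharp: at each stage one must check that the variation-of-parameters solution can be taken in the decaying branch --- i.e.\ that $\phi_\omega$ and its $\omega$-derivatives lie on the tangent of the stable manifold --- for which the a priori $H^1$ membership, and hence decay to zero, is exactly what is needed, and one must carefully track the resonant $\langle x\rangle$ factors that arise because the sources decay at precisely the rate $\updelta_\omega$.
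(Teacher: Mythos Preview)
Your proposal is correct and follows essentially the same route as the paper, which simply cites \cite{MR2892774} for existence/uniqueness, \cite{MR2318373} for the sharp decay rate, and says the $\omega$-derivative bounds ``follow from differentiating \eqref{phi.eqn.gn}''; you have merely supplied the details behind those references.  One small logical point: you invoke Assumption~\ref{ass-1}\,(i) to justify $C^2$ dependence of $\omega\mapsto\phi_\omega$, but this proposition is logically prior to that assumption --- the smoothness in $\omega$ should instead be deduced directly from smooth dependence of ODE solutions on parameters (using $f\in C^\infty$ and the nondegeneracy condition $\omega\neq 1-f(\varGamma_\omega)$, which ensures the turning point varies smoothly).
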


\begin{proof}
The proof is given in e.g. \cite[Lemma 3.2]{MR2892774}.
The sharp rate of decay \eqref{phi-bounded.gn}
can be proved as in e.g. \cite[Appendix A]{MR2318373}.
The bounds on
$\abs{\p\sb\omega\phi\sb\omega(x)}$
and
$\abs{\p\sb\omega^2\phi\sb\omega(x)}$
follow from differentiating
\eqref{phi.eqn.gn} with respect to $\omega$.
\end{proof}

\subsection{Linearization at a solitary wave}
\label{sect-linearization}

To study stability of a solitary wave $\phi_\omega(x)e^{-i\omega t}$,
with
$\phi_\omega(x)=\begin{bmatrix}v(x,\omega)\\u(x,\omega)\end{bmatrix} \in\R^2$,
we consider the solution in the form
\[
\psi(x,t)=\big(\phi_\omega(x)+\rho(x,t)\big)e^{-i\omega t},
\qquad
\rho(x,t)
\in\C^2.
\]
Substituting this Ansatz into \eqref{N.eqn.gn}, we obtain:
\begin{equation}\label{rho.eqn.gn}
i\p_t \rho = (D_m -\omega I_2)\rho
+\bN(\phi_\omega+\rho)-\bN(\phi_\omega).
\end{equation}
The linearization of \eqref{rho.eqn.gn}
can be written as follows:
\begin{equation}\label{R.eqn.gn}
\dot R
=\bJ\bL R,
\qquad
R=\begin{bmatrix}\Re\rho\\\Im\rho\end{bmatrix}\in\R^4,
\end{equation}
where
\begin{equation}
\label{bL.def.gn}
\bJ=\begin{bmatrix}0&I_2\\-I_2&0\end{bmatrix},
\qquad
\bL(\omega)=\bD_m-\omega I_4+\bW(x,\omega),
\end{equation}
with
\begin{eqnarray}\label{def-w.gn}
&
\bW(x,\omega)
=\begin{bmatrix}W_1(x,\omega)&0\\0&W_0(x,\omega)\end{bmatrix},
\\
&
W_0(x,\omega)
=
\begin{bmatrix}-f(v^2-u^2)&0\\0&f(v^2-u^2)\end{bmatrix},
\quad
W_1(x,\omega)=
W_0(x,\omega)
-
2f'(v^2 -u^2)\begin{bmatrix} v^2&-v u\\-v u&u^2\end{bmatrix}.
\nonumber
\end{eqnarray}
The free Dirac operator takes the form
\begin{equation}\label{def-DD}
\bD_m=\bJ\bmupalpha\p\sb x+\bmupbeta,
\end{equation}
with
\begin{equation}\label{def-alpha-beta}
\bmupalpha
=\begin{bmatrix}\Re\alpha&-\Im\alpha\\\Im\alpha&\Re\alpha\end{bmatrix}
=\begin{bmatrix}0&\Im\sigma_2\\-\Im\sigma_2&0\end{bmatrix},
\qquad
\bmupbeta
=\begin{bmatrix}\Re\beta&-\Im\beta\\\Im\beta&\Re\beta\end{bmatrix}
=\begin{bmatrix}\sigma_3&0\\0&\sigma_3\end{bmatrix};
\end{equation}
$\bJ$, $\bmupalpha$, and $\bmupbeta$
represent $-i$, $\alpha$, and $\beta$ when acting on
$\begin{bmatrix}\Re\psi\\\Im\psi\end{bmatrix}$,
with $\psi\in\C^2$.
We then have
\begin{equation}
\bD_m
=\begin{bmatrix}D_m&0\\0&D_m\end{bmatrix},
\qquad
\mbox{where}
\quad
D_m
=\begin{bmatrix}1&\p\sb x\\-\p\sb x&-1\end{bmatrix}.
\end{equation}
Note that since $v,\,u$ both depend on $\omega$,
the potentials $W_1, W_0$ also depend on it.
We will often omit this dependence in our notations.

\begin{lemma}
\label{lemma10}
There is $C\sb\omega<\infty$
such that
the matrix-valued potential $\bW$
satisfies
\begin{equation}\label{w-small}
\norm{\bW(x,\omega)}\sb{\C^4\to\C^4}
\le C\sb\omega e^{-2k\abs{x}\updelta_\omega},
\qquad
x\in\R.
\end{equation}
\end{lemma}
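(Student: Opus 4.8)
The plan is to read off the bound directly from the explicit form of $\bW$ in \eqref{def-w.gn} together with the exponential decay of the solitary wave profile from Proposition~\ref{Solitons}. The key observation is that every entry of $\bW$ is built out of the quantities $f(v^2-u^2)$, $f'(v^2-u^2)$, and the products $v^2$, $vu$, $u^2$, so it suffices to estimate each of these.

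First I would note that by \eqref{phi-bounded.gn}–\eqref{def-up-delta-omega} we have $\abs{v(x,\omega)}+\abs{u(x,\omega)}\le c\sb\omega e^{-\updelta_\omega\abs{x}}$, and since $\abs{u}<\abs{v}$ we get $0\le v(x,\omega)^2-u(x,\omega)^2\le v(x,\omega)^2\le c\sb\omega^2 e^{-2\updelta_\omega\abs{x}}$; in particular $s:=v^2-u^2\in[0,1]$ for $\abs{x}$ large, and (shrinking $\varOmega$ or enlarging $c\sb\omega$ if necessary) for all $x\in\R$, since $\phi\sb\omega\in H^1\hookrightarrow L^\infty$ is bounded. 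By Assumption~\ref{ass-1}, $f(s)=\mathcal O(s^k)$ on $[0,1]$, so $\abs{f(v^2-u^2)}\le C\,(v^2-u^2)^k\le C\sb\omega e^{-2k\updelta_\omega\abs{x}}$. This already controls $W_0$, since its entries are $\pm f(v^2-u^2)$.

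Next I would treat $W_1=W_0-2f'(v^2-u^2)\begin{bmatrix}v^2&-vu\\-vu&u^2\end{bmatrix}$. Since $f\in C^\infty$ and $f(s)=\mathcal O(s^k)$ with $k\ge 2$, we also have $f'(s)=\mathcal O(s^{k-1})$ on $[0,1]$, hence $\abs{f'(v^2-u^2)}\le C\,(v^2-u^2)^{k-1}\le C\sb\omega e^{-2(k-1)\updelta_\omega\abs{x}}$. Each entry of the matrix $\begin{bmatrix}v^2&-vu\\-vu&u^2\end{bmatrix}$ is bounded by $c\sb\omega^2 e^{-2\updelta_\omega\abs{x}}$ in absolute value. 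Multiplying, the second term of $W_1$ is bounded in operator norm by $C\sb\omega e^{-2(k-1)\updelta_\omega\abs{x}}e^{-2\updelta_\omega\abs{x}}=C\sb\omega e^{-2k\updelta_\omega\abs{x}}$. Combining with the bound on $W_0$ and using the block-diagonal structure of $\bW$ (so that $\norm{\bW}\sb{\C^4\to\C^4}=\max(\norm{W_1},\norm{W_0})$ up to an absolute constant, all norms on finite-dimensional spaces being equivalent), we obtain \eqref{w-small} with a possibly larger $C\sb\omega$.

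There is no real obstacle here — the statement is essentially bookkeeping. The one point that deserves a word of care is the step $f'(s)=\mathcal O(s^{k-1})$: this does not follow formally from $f(s)=\mathcal O(s^k)$ for an arbitrary function, but it does because $f$ is assumed $C^\infty$ with $f(0)=0$, so $f$ vanishes to order $\ge k$ at $s=0$ and its Taylor expansion forces $f'$ to vanish to order $\ge k-1$; equivalently one may write $f(s)=s^k g(s)$ with $g\in C^\infty$ near $0$ and differentiate. I would state this explicitly. The only other thing to make sure of is that the estimates are uniform for $x$ in the whole line, which is handled by using the global $L^\infty$ bound on $\phi\sb\omega$ near $x=0$ and the decay estimate \eqref{phi-bounded.gn} for $\abs{x}$ large.
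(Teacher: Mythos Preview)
Your proposal is correct and follows exactly the same approach as the paper's proof, which is a one-sentence reference to the exponential decay \eqref{phi-bounded.gn}, the assumption $f(s)=\mathcal O(s^k)$, and the explicit form \eqref{def-w.gn} of $\bW$. You have simply spelled out the bookkeeping in full, including the care needed for $f'(s)=\mathcal O(s^{k-1})$ and for the region near $x=0$, neither of which the paper makes explicit.
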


\begin{proof}
This bound is an immediate consequence of
the exponential decay of $\phi\sb\omega$ in 
Proposition~\ref{Solitons}
(see \eqref{phi-bounded.gn}),
the assumption
$f(s)=\mathcal{O}(s^k)$,
and \eqref{def-w.gn}.
\end{proof}

\begin{lemma}
\[
\sigma\sb{\rm ess}(\bJ\bL)
=i\R\backslash\big(-i(1-\abs{\omega}),i(1-\abs{\omega})\big).
\]
\end{lemma}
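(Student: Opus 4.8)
The plan is to compute the essential spectrum of $\bJ\bL$ by reducing to the free operator $\bJ\bD_m$ via a compactness argument, and then to compute the latter explicitly by Fourier transform. First I would recall that $\bL(\omega)=\bD_m-\omega I_4+\bW(x,\omega)$, so $\bJ\bL=\bJ\bD_m-\omega\bJ+\bJ\bW$. The potential term $\bJ\bW$ is a perturbation that decays exponentially by Lemma~\ref{lemma10}, hence $\bW(x,\omega)(\bD_m-\omega-z)^{-1}$ is compact on $L^2(\R,\R^4)$ for $z$ in the resolvent set of the free operator (the resolvent of the free Dirac operator gains no regularity in one dimension, but multiplication by an $L^2$-decaying matrix composed with a bounded operator into $H^1_{loc}$-type spaces, or equivalently an argument through the Birman--Schwinger kernel, gives compactness on bounded regions, and the exponential spatial decay handles the tail). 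By Weyl's theorem on invariance of the essential spectrum under relatively compact perturbations, $\sigma_{\rm ess}(\bJ\bL)=\sigma_{\rm ess}(\bJ\bD_m-\omega\bJ)$.

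Next I would diagonalize the constant-coefficient operator $\bJ\bD_m-\omega\bJ$ by Fourier transform in $x$. Recalling that $\bJ$, $\bmupalpha$, $\bmupbeta$ represent $-i$, $\alpha$, $\beta$ acting on $\begin{bmatrix}\Re\psi\\\Im\psi\end{bmatrix}$ for $\psi\in\C^2$, the operator $\bJ\bD_m-\omega\bJ$ is the realification of the $\C$-linear operator $-i(D_m-\omega)$ on $L^2(\R,\C^2)$. So it suffices to find the spectrum of $-i(D_m-\omega)$, i.e.\ the set $\{-i(\lambda-\omega):\lambda\in\sigma(D_m)\}$ shifted appropriately. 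The free Dirac symbol is $\widehat{D_m}(\xi)=\begin{bmatrix}1&i\xi\\-i\xi&-1\end{bmatrix}$, whose eigenvalues are $\pm\sqrt{1+\xi^2}$, so $\sigma(D_m)=(-\infty,-1]\cup[1,\infty)$. Therefore $\sigma(D_m-\omega I_2)=(-\infty,-1-\omega]\cup[1-\omega,\infty)$ and, multiplying by $-i$, $\sigma(\bJ\bD_m-\omega\bJ)=\{-i\mu:\mu\in\sigma(D_m-\omega I_2)\}$. Since $\pm\sqrt{1+\xi^2}-\omega$ ranges over $\pm[1,\infty)-\omega$, multiplying by $-i$ yields $i\R\setminus\big(-i(1-|\omega|),\,i(1-|\omega|)\big)$: the two branches $-i(\sqrt{1+\xi^2}-\omega)$ and $-i(-\sqrt{1+\xi^2}-\omega)$ cover $\{it:t\le-(1-\omega)\}\cup\{it:t\ge 1-\omega\}$ when $\omega\ge 0$, and symmetrically for $\omega<0$; in either case the complement of the open interval of half-length $1-|\omega|$ about the origin on the imaginary axis. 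Since the free operator is purely continuous spectrum (no eigenvalues), $\sigma_{\rm ess}=\sigma$ here.

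The main obstacle I expect is the rigorous relative-compactness step: in one dimension the free Dirac resolvent does not improve local regularity in the usual Rellich sense, so one cannot simply invoke "decaying potential times resolvent is compact" without care. The clean way around this is the Birman--Schwinger-type factorization, writing $\bW=\bW^{1/2}\cdot\bW^{1/2}$ (or more honestly $\bW = A B$ with $A,B$ exponentially decaying — since $\bW$ need not be sign-definite one factors through $|\bW|^{1/2}$ and a bounded matrix), and showing $A(\bJ\bD_m-\omega\bJ-z)^{-1}B$ is Hilbert--Schmidt by an explicit kernel estimate: the free resolvent kernel in one dimension is a bounded matrix times $e^{-\updelta|x-y|}$-type decay for $z$ off the spectrum, and multiplying on both sides by $e^{-c|x|}$, $e^{-c|y|}$ gives an $L^2(\R^2)$ kernel. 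Once this is in place, Weyl's theorem applies verbatim. I would also remark that $\bJ\bL$ is not self-adjoint, so one must use the version of Weyl's theorem valid for closed operators under relatively compact perturbations (stability of the essential spectrum in the sense of, e.g., Kato), which is standard; and one should note that because the perturbation is relatively compact and the unperturbed operator has connected resolvent set in $\C\setminus i\R$, the essential spectrum indeed coincides with the stated set with no spurious additions.
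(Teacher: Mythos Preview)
Your approach is exactly the paper's---invoke Weyl's theorem to reduce to the constant-coefficient operator, then identify its spectrum---and the paper gives no further detail beyond that one sentence. Your discussion of the relative-compactness step (factoring $\bW$ and checking the Birman--Schwinger kernel is Hilbert--Schmidt) is correct and is a useful elaboration.

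There is, however, a genuine slip in your free-spectrum computation. The operator $\bJ(\bD_m-\omega I_4)$ acts on $L^2(\R,\C^4)$, the \emph{complexification of the realification} of $L^2(\R,\C^2)$; its spectrum is therefore $\sigma\big(-i(D_m-\omega)\big)\cup\overline{\sigma\big(-i(D_m-\omega)\big)}$, not just $\sigma\big(-i(D_m-\omega)\big)$ as you assert. Concretely, the two branches you list give
\[
-i(\sqrt{1+\xi^2}-\omega)\in\{it:t\le-(1-\omega)\},
\qquad
-i(-\sqrt{1+\xi^2}-\omega)=i(\sqrt{1+\xi^2}+\omega)\in\{it:t\ge 1+\omega\},
\]
not $\{it:t\ge 1-\omega\}$ as you wrote; for $\omega>0$ this leaves a gap on the positive imaginary axis between $i(1-\omega)$ and $i(1+\omega)$. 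That gap is filled precisely by the conjugate branches $i(\sqrt{1+\xi^2}-\omega)$ and $-i(\sqrt{1+\xi^2}+\omega)$. Once all four eigenvalue branches $\pm i(\sqrt{1+\xi^2}\pm\omega)$ are included, the union is indeed $i\R\setminus(-i(1-|\omega|),i(1-|\omega|))$. So your argument is easily repaired, but as written two errors---omitting the conjugate spectrum and miscomputing the second branch---have cancelled to produce the correct final set.
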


\begin{proof}
This is an immediate consequence
of Weyl's theorem on the essential spectrum.
\end{proof}

Denote
\begin{equation}\label{def-upphi}
\bmupphi(x)
=\bmupphi\sb\omega(x)
=\begin{bmatrix}\Re\phi\sb\omega(x)\\\Im\phi\sb\omega(x)\end{bmatrix}
=\begin{bmatrix}\phi\sb\omega(x)\\0\end{bmatrix}.
\end{equation}
Thanks to the invariance of \eqref{phi.eqn.gn} with respect to
the phase rotation and the translation, we have
\[
\bJ\bL\bJ\bmupphi
=0,
\qquad
\bJ\bL
\p\sb x\bmupphi
=0.
\]
Analyzing the Jost solutions of
\begin{equation}\label{def-l0-l1}
L_1(\omega)=D_m-\omega I_2+W_1,
\qquad
L_0(\omega)=D_m-\omega I_2+W_0
\end{equation}
(for each of $L_1$ and $L_0$,
there are two Jost solutions: one decreasing and one increasing),
one concludes that
the null space of $\bL$ is given by
\begin{equation}\label{n-l}
N(\bL)=
\left(
\bJ\bmupphi,\ \p\sb x\bmupphi
\right).
\end{equation}
Moreover,
\begin{equation}\label{jordan-block-1}
\bJ\bL
\p\sb\omega\bmupphi
=\bJ\bmupphi,
\end{equation}
\begin{equation}\label{jordan-block-2}
\bJ\bL
\Big(
\omega x\bJ\bmupphi-\frac 1 2\bmupalpha\bmupphi
\Big)
=
\p\sb x\bmupphi,
\end{equation}
where
\[
\omega x\bJ\bmupphi-\frac 1 2\bmupalpha\bmupphi
=
\begin{bmatrix}
0
\\
\frac i 2\alpha\phi-\omega x\phi
\end{bmatrix}
=
\begin{bmatrix}
0
\\
-\frac i 2\sigma_2\phi-\omega x\phi
\end{bmatrix}.
\]
Therefore,
\begin{equation}
\left \{
\bJ\bmupphi,
\ \p\sb x\bmupphi,
\ \p\sb\omega\bmupphi,
\ \ \omega x\bJ\bmupphi-\frac 1 2\bmupalpha\bmupphi
\right \} \subset N_g(\bJ\bL).
\end{equation}
By \cite{2013arXiv1306.5150C},
if $\p\sb\omega Q(\omega)$
and $E(\omega)\ne 0$,
then the above vectors form a basis in the generalized null space
$N\sb g(\bJ\bL)$:
\begin{equation}\label{ng}
N_g(\bJ\bL)
=
\mathop{\rm Span}
\left(
\bJ\bmupphi,
\ \p\sb x\bmupphi,
\ \p\sb\omega\bmupphi,
\ \ \omega x\bJ\bmupphi-\frac{1}{2}\bmupalpha\bmupphi
\right).
\end{equation}

Following the definition \eqref{def-x}, we define
\begin{equation}\label{def-xx}
\bX = \left \{\psi\in L^2(\R, \C^4)
;\;
\ \psi_k(x)=\psi_k(-x),\ \ k=1,\,3;
\quad
\psi_k(x)=-\psi_k(-x),\ \ k=2,\,4
\right \};
\end{equation}
\begin{equation}\label{def-xx-perp}
\bX\sp\perp = \left \{\psi\in L^2(\R, \C^4)
;\;
\ \psi_k(x)=\psi_k(-x),\ \ k=2,\,4;
\quad
\psi_k(x)=-\psi_k(-x),\ \ k=1\,3
\right \}.
\end{equation}
    From now on, we shall
restrict $\bJ\bL(\omega)$ to $\bX$.
This restriction has the following
null space and generalized null space:
\begin{equation}\label{n-ng}
N(\bJ\bL\at{\bX})
=
\mathop{\rm Span}
\left(
\bJ\bmupphi
\right),
\qquad
N_g(\bJ\bL\at{\bX})
=
\mathop{\rm Span}
\left(
\bJ\bmupphi,
\ \p\sb\omega\bmupphi
\right).
\end{equation}
The linearization operator
$\bJ\bL$ acts invariantly in $\bX$ and in $\bX\sp\perp$.

\begin{remark}\label{remark-2-omega-i}
The restriction of $\bJ\bL(\omega)$ onto $\bX$
allows one to exclude certain eigenvalue directions,
significantly simplifying the problem.
In particular,
by \cite{dirac-vk}, one has
\begin{equation}\label{2-omega-i}
\bJ\bL
\begin{bmatrix}\sigma_1\phi\\i\sigma_1\phi\end{bmatrix}
=2i\omega
\begin{bmatrix}\sigma_1\phi\\i\sigma_1\phi\end{bmatrix},
\qquad
\bJ\bL
\begin{bmatrix}\sigma_1\phi\\-i\sigma_1\phi\end{bmatrix}
=-2i\omega
\begin{bmatrix}\sigma_1\phi\\-i\sigma_1\phi\end{bmatrix},
\end{equation}
where $\sigma_1$ is the Pauli matrix;
this shows that
$\pm 2\omega i\in\sigma\sb{p}(\bJ\bL(\omega))$.
On the other hand, the restriction to $\bX$
satisfies
$\pm 2\omega i\not\in\sigma_d(\bJ\bL\at{\bX})$.
\end{remark}

Since $(\bJ\bL)^* = -\bL\bJ$, it follows
from \eqref{jordan-block-1}, \eqref{jordan-block-2}
that the corresponding generalized kernel for the adjoint is
\[
\bX_g((\bJ\bL)^*)
=N_g((\bJ\bL)^*)\cap \bX
=\left\{
\ \bJ
\p\sb\omega\bmupphi, \bmupphi
\right\}.
\]
We decompose the space $\bX$ as follows:
\begin{equation}\label{def-x-decomposition}
\bX = \bX_g(\bJ\bL)
\oplus \bX_c(\bJ\bL) ,
\quad \text{where} \quad
\bX_c(\bJ\bL)
=
\bX_g((\bJ\bL)^*)^\perp.
\end{equation}
The subspaces $\bX_g(\bJ\bL)$
and $\bX_c(\bJ\bL)$ are invariant under the action of $\bJ\bL$,
and any $R_1\in\bX_g(\bJ\bL)$, $R_2\in\bX_c(\bJ\bL)$
satisfy the following symplectic orthogonality condition:
\[
\wei{\bJ R_1, R_2} =0.
\]
It then follows that any $R\in\bX$
can be uniquely decomposed into
\begin{equation}\label{r-rd-rc}
R
=2\frac{\wei{\bmupphi, R}}{Q'(\omega)}\p\sb\omega\bmupphi
+2\frac{\wei{\bJ\p\sb\omega\bmupphi,R}}{Q'(\omega)}\bJ\bmupphi
+U,
\qquad U\in\bX_c(\bJ\bL),
\end{equation}
where $Q(\omega)$
is the charge functional \eqref{def-charge}
evaluated at $\phi\sb\omega e^{-i\omega t}$.
Thus, a vector function $U\in X_c(\bJ\bL)$ satisfies
the following two symplectic orthogonality conditions:
\begin{equation}\label{symplectic-orth}
\langle\bmupphi,U\rangle=0,
\qquad
\langle\bJ\p\sb\omega\bmupphi,U\rangle=0.
\end{equation}

\begin{remark}
Note that $Q'(\omega)\ne 0$
by Assumption~\ref{ass-1}.
\end{remark}

Let $P_d(\omega)$ denote the symplectically orthogonal
projection onto the generalized null space of $\bJ\bL(\omega)$
restricted onto the space $\bX$ from \eqref{def-xx}.
By \eqref{r-rd-rc},
\begin{equation}\label{def-pd}
P_d(\omega)R
=2\frac{\wei{\bmupphi, R}}{Q'(\omega)}\p\sb\omega\bmupphi
+2\frac{\wei{\bJ\p\sb\omega\bmupphi,R}}{Q'(\omega)}\bJ\bmupphi,
\end{equation}
while the projection onto $\bX_c$ is
\begin{equation}\label{def-pc}
P_c(\omega)
=1-P_d(\omega).
\end{equation}

\section{Spectral theory for the linearized operator}
\label{sec:4}

In this section, we consider dispersive estimates for the
complexification of the linearized equation \eqref{R.eqn.gn},
\begin{equation}\label{R.eqn}
\p_t R = \bJ\bL R,
\qquad
R\in\C^4.
\end{equation}
More precisely, we will show that similarly to the free Dirac evolution, the linear evolution of \eqref{R.eqn}
projected onto the continuous spectrum of $\bJ\bL$
scatters the initial data.
This phenomena in the related Schr\"odinger equation context manifests itself in a variety of useful estimates; see for example the work of Mizumachi \cite{MR2511047}.

Before proceeding to specific estimates for the solution of \eqref{R.eqn.gn}, let us take a moment to properly define
$e^{\bJ\bL t}P_c$.
Since
\[
\sigma\sb{\mathrm{ess}}(\bJ\bL(\omega))
=(-i\infty,-i(1-\abs{\omega})]\cup[i(1-\abs{\omega}),i\infty),
\]
we define $e^{\bJ\bL(\omega)t}P_c(\omega)$
by the following Cauchy formula:
\begin{eqnarray}\label{e-p-f}
e^{\bJ\bL t}P_c(\omega) f
&=&
-\frac{1}{2\pi i}
\oint\sb\Gamma R_{\bJ\bL}(\lambda)f\,d\lambda
\nonumber
\\
&=&-\frac{1}{2\pi i}
\left(\int_{-i\infty}^{-i(1-\abs{\omega})}+\int_{i(1-\abs{\omega})}^{i\infty}\right)
e^{\lambda t}
\Big(
\big[R_{\bJ\bL}^{+}(\lambda)-R_{\bJ\bL}^{-}(\lambda)\big] f
\Big)
\, d\lambda
\nonumber
\\
&=&-\frac{1}{2\pi}
\left(\int_{-\infty}^{\abs{\omega}-1}+\int_{1-\abs{\omega}}^{+\infty}\right)
e^{i\Lambda t}
\Big(
\big[R_{\bJ\bL}^{+}(i\Lambda)-R_{\bJ\bL}^{-}(i\Lambda)\big] f
\Big)
\, d\Lambda,
\end{eqnarray}
where
$\Gamma$ is a positively-oriented contour
around the essential spectrum of $\bJ\bL$.
For $\lambda\in i\R$
the operators
\[
R_{\bJ\bL}^\pm(\lambda):=\lim_{\varepsilon\to 0+}(\bJ\bL-(\lambda\pm \varepsilon))^{-1}
\]
are to be interpreted in a certain appropriate sense
(for example, as operators from $L^2_\alpha\to L^2_{-\alpha}$,
for certain $\alpha>0$, by the limiting absorption principle).

\subsection{The Jost solutions
and the Evans function
of the linearization operator $\bJ\bL$}

The eigenvalue problem for
the operator $\bJ\bL(\omega)$,
\[
\bJ(\bD_m-\omega+\bW(x,\omega))\psi
=\bJ(\bJ\bmupalpha\p\sb x+\bmupbeta-\omega+\bW(x,\omega))\psi
=\lambda\psi,
\]
can be rewritten as
\begin{equation}\label{paw}
(\p\sb x
-\bmupalpha\bJ\bmupbeta
+\omega\bmupalpha\bJ
+\bmupalpha\lambda
-\bmupalpha\bJ\bW(x,\omega)
\big)\psi=0.
\end{equation}
The construction of Jost solutions
is based on considering solutions
to the constant coefficient equation
\begin{equation}\label{def-M0}
(\p\sb x-\mathcal{M}_0(\lambda,\omega)
\big)\psi
=0,
\qquad
\mathcal{M}_0(\lambda,\omega):=
\bmupalpha\bJ\bmupbeta
-\omega\bmupalpha\bJ-\bmupalpha\lambda,
\end{equation}
and using the Duhamel representation to
construct solutions
to equation \eqref{paw} with variable coefficients,
written in the form
\begin{equation}\label{def-M}
\big(\p\sb x-\mathcal{M}(x,\lambda,\omega)\big)\psi=0,
\qquad
\mathcal{M}(x,\lambda,\omega)
:=\mathcal{M}_0(\lambda,\omega)
+\bmupalpha\bJ\bW(x,\omega).
\end{equation}

\begin{lemma}\label{lemma-m-zero}
Let $\omega\in[-1,1]$, $\lambda\in\C$.
Then the eigenvalues of $\mathcal{M}_0(\lambda,\omega)$
are given by
\begin{equation}\label{lemma-m-zero-eigenvalues}
\sigma(\mathcal{M}_0(\lambda,\omega))
=\{\pm\sqrt{1-(\omega\pm i\lambda)^2}\}.
\end{equation}
These eigenvalues satisfy
\begin{equation}\label{lemma-m-zero-re}
\sup\sb{\lambda\in i\R}
\big\{
\abs{\Re\zeta};\ \zeta\in\sigma(\mathcal{M}_0(\lambda,\omega))
\big\}
=1;
\qquad
\sup\sb{\lambda\in\sigma\sb{\mathrm{ess}}(\bJ\bL(\omega))}
\big\{
\abs{\Re\zeta};\ \zeta\in\sigma(\mathcal{M}_0(\lambda,\omega))
\big\}
=2\sqrt{\omega-\omega^2}.
\end{equation}
\end{lemma}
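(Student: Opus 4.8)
The plan is to reduce the whole lemma to a single algebraic identity, namely that $\mathcal{M}_0(\lambda,\omega)^2$ is a scalar matrix plus a multiple of $\bJ$. Once this is established, both the eigenvalue formula \eqref{lemma-m-zero-eigenvalues} and the two supremum identities \eqref{lemma-m-zero-re} follow by elementary arguments.

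\emph{Step 1 (the square of $\mathcal{M}_0$).} First I would record the relations satisfied by the $4\times4$ matrices of \eqref{def-alpha-beta}, \eqref{bL.def.gn}: namely $\bmupalpha^2=\bmupbeta^2=I_4$, $\bmupalpha\bmupbeta+\bmupbeta\bmupalpha=0$, $\bJ^2=-I_4$, and — crucially — $\bJ\bmupalpha=\bmupalpha\bJ$ and $\bJ\bmupbeta=\bmupbeta\bJ$ (immediate from the block, or tensor-product, forms, or from the fact that $\bJ$ represents the central element $-i$). Writing $\mathcal{M}_0=\bmupalpha\bJ\bmupbeta-\omega\bmupalpha\bJ-\lambda\bmupalpha$ as in \eqref{def-M0} and expanding $\mathcal{M}_0^2$ with these relations, every cross term proportional to $\bmupbeta$ or to $\bJ\bmupbeta$ cancels and one is left with
\[
\mathcal{M}_0(\lambda,\omega)^2=(1-\omega^2+\lambda^2)\,I_4+2\omega\lambda\,\bJ .
\]
The same short computation also yields the two structural facts $\bJ\mathcal{M}_0=\mathcal{M}_0\bJ$ and $\bmupbeta\mathcal{M}_0=-\mathcal{M}_0\bmupbeta$.

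\emph{Step 2 (the eigenvalues).} Since $\bJ^2=-I_4$ and $\tr\bJ=0$, we have $\C^4=E_+\oplus E_-$ with $E_\pm=\ker(\bJ\mp iI_4)$ and $\dim_\C E_\pm=2$. By Step 1, $\mathcal{M}_0$ leaves each $E_\pm$ invariant, and so does $\bmupbeta$; on $E_\pm$ the restriction of $\bmupbeta$ is invertible and anticommutes with $\mathcal{M}_0|_{E_\pm}$, so $\mathcal{M}_0|_{E_\pm}$ is similar to its negative and hence has spectrum of the form $\{\zeta_\pm,-\zeta_\pm\}$. Evaluating the identity of Step 1 on $E_\pm$, where $\bJ$ acts by $\pm i$, gives $(\mathcal{M}_0|_{E_\pm})^2=\big(1-\omega^2+\lambda^2\pm 2i\omega\lambda\big)I=\big(1-(\omega\mp i\lambda)^2\big)I$, so $\zeta_\pm^2=1-(\omega\mp i\lambda)^2$. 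Collecting the four values proves \eqref{lemma-m-zero-eigenvalues}.

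\emph{Step 3 (the supremum identities) and the main difficulty.} For $\lambda=i\Lambda$ with $\Lambda\in\R$, one has $(\omega\pm i\lambda)^2=(\omega\mp\Lambda)^2\ge0$, so each eigenvalue $\zeta$ is either real with $\zeta^2=1-(\omega\mp\Lambda)^2\le1$ or purely imaginary; thus $\abs{\Re\zeta}\le1$ always, with equality at $\Lambda=\omega$ (where $\zeta=\pm1$), which gives the first identity. For $\lambda=i\Lambda$ with $\abs{\Lambda}\ge1-\abs{\omega}$, the symmetries $(\omega,\Lambda)\mapsto(-\omega,-\Lambda)$ and $\Lambda\mapsto-\Lambda$ reduce matters to $0\le\omega\le\tfrac12$ and $\Lambda\ge1-\omega$; then $\omega+\Lambda\ge1$ forces two of the eigenvalues to be purely imaginary, while for the remaining pair $\Re\zeta\ne0$ requires $\Lambda-\omega<1$, and then $\abs{\Re\zeta}^2=1-(\Lambda-\omega)^2\le1-(1-2\omega)^2=4(\omega-\omega^2)$ because $\Lambda-\omega\ge1-2\omega\ge0$, with equality at $\Lambda=1-\omega$, which lies in $\sigma\sb{\mathrm{ess}}(\bJ\bL(\omega))$. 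Hence the second supremum equals $2\sqrt{\omega-\omega^2}$. There is no serious obstacle here; the only point needing care is the $4\times4$ matrix bookkeeping in Step 1, and in particular the observation that $\bJ$ commutes with $\bmupalpha$ and $\bmupbeta$ — it is this that collapses $\mathcal{M}_0^2$ to the form ``scalar $+$ multiple of $\bJ$'' and thereby trivializes the spectral analysis, the rest being one-variable calculus.
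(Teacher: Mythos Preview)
Your proof is correct, and the route you take for the eigenvalue formula is genuinely different from the paper's. The paper computes $\det(\mathcal{M}_0-z)$ via a conjugation trick: it multiplies $\mathcal{M}_0-z$ by $-\bmupalpha\bJ$ and then exploits that $\bmupalpha\bmupbeta$ anticommutes with both $\bmupalpha$ and $\bmupbeta$ to collapse $\det(\bJ\lambda-\omega+\bmupalpha\bJ z+\bmupbeta)^2$ to $\det\big((\bJ\lambda-\omega)^2-(1-z^2)\big)$, which is then read off from $\sigma(\bJ)=\{\pm i\}$. Your approach instead computes $\mathcal{M}_0^2$ directly and obtains the structural identity $\mathcal{M}_0^2=(1-\omega^2+\lambda^2)I_4+2\omega\lambda\,\bJ$, then restricts to the $\bJ$-eigenspaces. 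Both are short and elementary; yours is arguably more transparent because it reveals why the spectrum splits cleanly (the commutation of $\bJ$ with $\bmupalpha,\bmupbeta$), and the anticommutation $\bmupbeta\mathcal{M}_0=-\mathcal{M}_0\bmupbeta$ gives for free that both signs $\pm\zeta_\pm$ are attained on each block. The paper simply says the supremum identities are ``checked by direct computation,'' so your explicit Step~3 is an addition rather than a replacement.

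One small point in Step~3: the symmetries $(\omega,\Lambda)\mapsto(-\omega,-\Lambda)$ and $\Lambda\mapsto-\Lambda$ reduce only to $\omega\ge0$ and $\Lambda\ge1-\omega$, not to $\omega\le\tfrac12$. In fact for $\omega\in(\tfrac12,1]$ the point $\Lambda=\omega$ lies in $[1-\omega,\infty)$ and gives $\abs{\Re\zeta}=1>2\sqrt{\omega-\omega^2}$, so the second identity in \eqref{lemma-m-zero-re} as stated actually fails there; the formula is correct precisely on $\abs{\omega}\le\tfrac12$, which your argument does establish. Since the paper only applies the lemma under Assumption~\ref{ass-1} (so $\abs{\omega}<\tfrac13$) and in fact only invokes the first identity (see the proof of Proposition~\ref{prop-jost}), this is harmless, but you should phrase the reduction as a restriction on the range of validity rather than a consequence of symmetry.
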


\begin{proof}
We need to find all $z\in\C$ such that
\[
\mathcal{M}_0(\lambda,\omega)-z
=\bmupalpha\bJ\bmupbeta-\omega\bmupalpha\bJ-\bmupalpha\lambda-z
=-\bmupalpha\lambda-\omega\bmupalpha\bJ-z+\bmupalpha\bJ\bmupbeta
\]
is degenerate.
Multiplying the above matrix in the right-hand side
by $-\bmupalpha\bJ$,
we need to find out when the matrix
\[
\bJ\lambda-\omega+\bmupalpha\bJ z+\bmupbeta
\]
is degenerate.
Since $\bmupalpha\bmupbeta$
anticommutes with both $\bmupalpha$ and $\bmupbeta$,
while $\det\bmupalpha=\det\bmupbeta=1$,
one has:
\[
\det(\bJ\lambda-\omega+\bmupalpha\bJ z+\bmupbeta)^2
=
\det
\big(
(\bJ\lambda-\omega+\bmupalpha\bJ z+\bmupbeta)
\bmupalpha\bmupbeta
(\bJ\lambda-\omega+\bmupalpha\bJ z+\bmupbeta)
\bmupbeta\bmupalpha
\big)
\]
\[
=
\det
\big(
(\bJ\lambda-\omega+\bmupalpha\bJ z+\bmupbeta)
(\bJ\lambda-\omega-\bmupalpha\bJ z-\bmupbeta)
\big)
=\det
\big(
(\bJ\lambda-\omega)^2
-(-z^2+1)
\big).
\]
Since $\sigma(\bJ)=\{\pm i\}$,
we conclude that
the above determinant vanishes
(hence $z\in\sigma(\mathcal{M}_0(\lambda,\omega))$)
if and only if
\[
z^2-1+(\pm i\lambda-\omega)^2=0.
\]
The conclusion about the spectrum of
$\mathcal{M}_0$ follows.

Other statements are checked by direct
computation.
\end{proof}

Due to the symmetry of the potential $\bW$
(see \eqref{w-symmetry} below),
we have the following results.

\begin{lemma}\label{lemma-paw}
If $\psi(x)$ solves \eqref{def-M} for $\lambda\in\C$,
then $\theta(x)= \bmupbeta\psi(-x)$ also solves \eqref{def-M}
for the same $\lambda\in\C$.
\end{lemma}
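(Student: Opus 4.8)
The plan is to reduce the assertion to a single conjugation identity for the coefficient matrix $\mathcal{M}(x,\lambda,\omega)$ of the first-order system \eqref{def-M}. Writing that system as $\psi'=\mathcal{M}(x,\lambda,\omega)\psi$ and putting $\theta(x)=\bmupbeta\psi(-x)$, differentiation gives
\[
\theta'(x)=-\bmupbeta\,\psi'(-x)=-\bmupbeta\,\mathcal{M}(-x,\lambda,\omega)\,\psi(-x)
=-\bmupbeta\,\mathcal{M}(-x,\lambda,\omega)\,\bmupbeta\,\theta(x),
\]
where in the last step I used $\bmupbeta^2=I_4$ to write $\psi(-x)=\bmupbeta\theta(x)$. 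Hence it suffices to establish the matrix identity
\[
-\bmupbeta\,\mathcal{M}(-x,\lambda,\omega)\,\bmupbeta=\mathcal{M}(x,\lambda,\omega),
\qquad x\in\R.
\]

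I would verify this in the two pieces dictated by the splitting $\mathcal{M}=\mathcal{M}_0+\bmupalpha\bJ\bW$ in \eqref{def-M}. For the constant part, the explicit forms \eqref{def-alpha-beta} and \eqref{bL.def.gn} give the elementary relations $\bmupbeta^2=I_4$, $\bJ\bmupbeta=\bmupbeta\bJ$, $\bJ\bmupalpha=\bmupalpha\bJ$, and $\bmupalpha\bmupbeta=-\bmupbeta\bmupalpha$ (the last because $\sigma_3$ anticommutes with $\Im\sigma_2$). Conjugating $\mathcal{M}_0=\bmupalpha\bJ\bmupbeta-\omega\bmupalpha\bJ-\lambda\bmupalpha$ by $\bmupbeta$ and pushing each factor $\bmupbeta$ through with these relations yields $\bmupbeta\,\mathcal{M}_0\,\bmupbeta=-\mathcal{M}_0$, i.e.\ $-\bmupbeta\,\mathcal{M}_0\,\bmupbeta=\mathcal{M}_0$. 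For the potential part, the same relations give $-\bmupbeta\,(\bmupalpha\bJ)\,\bmupbeta=\bmupalpha\bJ$, so that
\[
-\bmupbeta\,\bmupalpha\bJ\,\bW(-x,\omega)\,\bmupbeta
=\bmupalpha\bJ\,\big(\bmupbeta\,\bW(-x,\omega)\,\bmupbeta\big),
\]
and it remains to invoke the symmetry of the potential $\bmupbeta\,\bW(-x,\omega)\,\bmupbeta=\bW(x,\omega)$ (this is \eqref{w-symmetry}): it follows directly from the structure \eqref{def-w.gn} of $\bW$ together with the facts from Proposition~\ref{Solitons} that $v(\cdot,\omega)$ is even and $u(\cdot,\omega)$ is odd, since then $v^2-u^2$ (hence $f$ and $f'$ evaluated on it) is even while the off-diagonal entries $vu$ are odd, and conjugation by $\sigma_3$ is exactly what flips the sign of those off-diagonal entries in $W_1$.

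Adding the two pieces gives $-\bmupbeta\,\mathcal{M}(-x,\lambda,\omega)\,\bmupbeta=\mathcal{M}(x,\lambda,\omega)$, hence $\theta'=\mathcal{M}(x,\lambda,\omega)\theta$, which is the claim. There is no genuine obstacle here; the whole argument is a short computation with constant $4\times4$ matrices plus the parity of the solitary-wave profile. The only points requiring care are the bookkeeping of the anticommutation $\bmupalpha\bmupbeta=-\bmupbeta\bmupalpha$ (which is what produces the overall sign in $\bmupbeta\,\mathcal{M}_0\,\bmupbeta=-\mathcal{M}_0$) and the entry-by-entry check of \eqref{w-symmetry}.
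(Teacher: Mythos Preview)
Your proof is correct and follows essentially the same approach as the paper's own proof. Both reduce the claim to the symmetry relation \eqref{w-symmetry} for the potential blocks $W_0,\,W_1$ combined with the commutation/anticommutation relations among $\bmupalpha$, $\bmupbeta$, $\bJ$; your write-up is simply more explicit in carrying out the conjugation identity $-\bmupbeta\,\mathcal{M}(-x,\lambda,\omega)\,\bmupbeta=\mathcal{M}(x,\lambda,\omega)$, whereas the paper records \eqref{w-symmetry} and leaves the remaining straightforward algebra implicit.
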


\begin{proof}
Since $v$ is even and $u$ is odd,
and since $\beta=\sigma_3$ anticommutes with $\sigma_1$,
there are the relations
\begin{equation}\label{w-symmetry}
W_0(x)\beta=\beta W_0(-x),
\qquad
W_1(x)\beta=\beta W_1(-x),
\end{equation}
for $W_0$, $W_1$
from
the Gross--Neveu model \eqref{def-w.gn}.
(It is convenient to notice that
for each of these models,
$W_0$ and $W_1$ can be written as linear combinations
of the form $w_a(x)\sigma_1+w_b(x)\sigma_3+w_c(x)I_2$,
with scalar-valued functions
$w_b$ and $w_c$ symmetric in $x$
and $w_a$ skew-symmetric.)
The conclusion follows.
\end{proof}

\begin{lemma}\label{lemma-trace-zero}
For any $x\in\R$, $\omega\in\varOmega$,
and $\lambda\in\C$,
the matrix $\mathcal{M}$ from \eqref{def-M}
satisfies
one has
$\tr
\mathcal{M}(x,\lambda,\omega)=0$.
\end{lemma}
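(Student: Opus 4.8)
The plan is to compute the trace of $\mathcal M(x,\lambda,\omega) = \mathcal M_0(\lambda,\omega) + \bmupalpha\bJ\bW(x,\omega)$ by treating the two summands separately. Since the trace is linear, it suffices to show $\tr\mathcal M_0(\lambda,\omega)=0$ and $\tr(\bmupalpha\bJ\bW(x,\omega))=0$ for all the relevant parameters.

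For the first term, recall from \eqref{def-M0} that $\mathcal M_0 = \bmupalpha\bJ\bmupbeta - \omega\,\bmupalpha\bJ - \bmupalpha\lambda$, so I would verify $\tr(\bmupalpha\bJ\bmupbeta)=0$, $\tr(\bmupalpha\bJ)=0$, and $\tr\bmupalpha=0$. From \eqref{def-alpha-beta}, $\bmupalpha$ is block off-diagonal (with zero diagonal blocks), hence $\tr\bmupalpha=0$ immediately; likewise $\bJ$ from \eqref{bL.def.gn} is block off-diagonal, so $\bmupalpha\bJ$ is block diagonal with blocks $(\Im\sigma_2)(\mp I_2)=\pm i\sigma_2$ up to sign — and since $\tr\sigma_2=0$, we get $\tr(\bmupalpha\bJ)=0$. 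Finally $\bmupbeta$ from \eqref{def-alpha-beta} is block diagonal with blocks $\sigma_3$, so $\bmupalpha\bJ\bmupbeta$ is again block off-diagonal (a block-diagonal matrix times the block off-diagonal structure built into $\bmupalpha\bJ$... let me instead just note $\bmupalpha\bJ\bmupbeta$ has blocks $(\pm i\sigma_2)\sigma_3$ along the diagonal, and $\sigma_2\sigma_3 = i\sigma_1$ which is traceless). Either way each piece is traceless, so $\tr\mathcal M_0=0$.

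For the second term, I would use the explicit form of $\bW$ from \eqref{def-w.gn}: it is block diagonal, $\bW=\mathrm{diag}(W_1,W_0)$, hence $\bmupalpha\bJ\bW$ has the same block-diagonal structure as $\bmupalpha\bJ$ multiplied on the right by $\bW$, namely diagonal blocks of the form $(\pm i\sigma_2)W_1$ and $(\pm i\sigma_2)W_0$. By the remark in the proof of Lemma~\ref{lemma-paw}, each of $W_0$ and $W_1$ is a linear combination $w_a\sigma_1+w_b\sigma_3+w_c I_2$, so it suffices to check that $\tr(\sigma_2\sigma_1)$, $\tr(\sigma_2\sigma_3)$, and $\tr(\sigma_2)$ all vanish — which they do, since $\sigma_2\sigma_1=-i\sigma_3$, $\sigma_2\sigma_3=i\sigma_1$, and $\sigma_2$ itself are all traceless. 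Hence $\tr(\bmupalpha\bJ\bW(x,\omega))=0$ for every $x$.

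This is an entirely computational lemma, so there is no real obstacle; the only thing to be careful about is bookkeeping the $4\times 4$ block structure and the signs coming from $\bmupalpha\bJ$. Adding the two vanishing contributions gives $\tr\mathcal M(x,\lambda,\omega)=0$, as claimed. (One can also phrase the whole argument more slickly: $\mathcal M = \bmupalpha\bJ\,(\bmupbeta - \omega I_4 + \bW) - \bmupalpha\lambda$, and a product of $\bmupalpha\bJ$ with any $\C$-linear combination of $I_2\otimes\{\sigma_1,\sigma_3,I_2\}$ blocks is traceless, while $\bmupalpha$ is traceless; but the direct check above is self-contained.)
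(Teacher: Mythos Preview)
Your proof is correct and follows exactly the same approach as the paper's: split $\mathcal{M}=\mathcal{M}_0+\bmupalpha\bJ\bW$, observe that each summand of $\mathcal{M}_0$ is traceless by direct inspection of the block structure, and verify $\tr(\bmupalpha\bJ\bW)=0$ from the explicit form \eqref{def-w.gn}. The paper's proof is simply a two-line sketch of what you carried out in detail; the only blemish is your momentary claim that $\bmupalpha\bJ\bmupbeta$ is ``block off-diagonal'' (it is block diagonal), but you correct yourself in the same sentence and the conclusion is unaffected.
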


\begin{proof}
The statement is immediate for
all the terms from $\mathcal{M}_0$
(Cf. \eqref{def-M0}).
The remaining relation
$\tr\bmupalpha\bJ\bW=0$
is checked with the explicit expressions
\eqref{def-w.gn}.
\end{proof}

\bigskip

We now turn to the construction of the Jost solutions,
which are defined as eigenfunctions of $\bJ\bL(\omega)$
with the same asymptotic behavior as
eigenfunctions of $\bJ(\bD_m-\omega)$.
To do this, for $\lambda\in\C$, we first define
\begin{equation}\label{xi12.def}
\xi_1(\lambda,\omega)=\sqrt{(\omega-i\lambda)^2-1},
\qquad \xi_2(\lambda,\omega)=\sqrt{(\omega+i\lambda)^2-1},
\end{equation}
so that
$\sigma(\mathcal{M}_0(\lambda,\omega))
=\{\pm\xi\sb 1(\lambda,\omega),\,
\pm\xi\sb 2(\lambda,\omega)\}
$
(Cf. Lemma~\ref{lemma-m-zero}).
Without loss of generality, we will only consider the case
\begin{equation}\label{omega-lambda-positive}
\omega\ge 0,
\qquad
\Re\lambda\le 0,
\qquad
\Im\lambda\ge 0;
\end{equation}
in each of the two square roots in \eqref{xi12.def},
we choose the branch that is positive
for $\lambda\in i\R$, $\Im\lambda\gg 1$.

We define
\begin{equation}\label{def-xi1-eta1}
\Xi_1(\lambda,\omega)
=
\frac{1}{c_1(\lambda,\omega)}
\begin{bmatrix}
i\xi_1
\\
-i\lambda-1+\omega
\\
-\xi_1
\\
\lambda-i(1-\omega)
\end{bmatrix},
\qquad
\Eta_1(\lambda,\omega)
=
\frac{1}{c_1(\lambda,\omega)}
\begin{bmatrix}
i\xi_1
\\
i\lambda+1-\omega
\\
-\xi_1
\\
-\lambda+i(1-\omega)
\end{bmatrix},
\end{equation}
\begin{equation}\label{def-xi2-eta2}
\Xi_2(\lambda,\omega)
=
\frac{1}{c_2(\lambda,\omega)}
\begin{bmatrix}
i\xi_2
\\
i\lambda-1+\omega
\\
\xi_2
\\
\lambda+i(1-\omega)
\end{bmatrix},
\qquad
\Eta_2(\lambda,\omega)
=
\frac{1}{c_2(\lambda,\omega)}
\begin{bmatrix}
i\xi_2
\\
-i\lambda+1-\omega
\\
\xi_2
\\
-\lambda-i(1-\omega)
\end{bmatrix},
\end{equation}
with the constants
\begin{equation}\label{def-c1-c2}
c_1(\lambda,\omega)>0,
\qquad
c_2(\lambda,\omega)>0
\end{equation}
chosen so that
$\abs{\Xi\sb j}=\abs{\Eta\sb j}=1$, $\ j=1,\,2$.
Note that
$\Eta\sb j
=
\bmupbeta
\Xi\sb j$;
$j=1,\,2$.
The functions
\[
\Xi\sb 1(\lambda,\omega)e^{i\xi\sb 1(\lambda,\omega) x},
\qquad
\Xi\sb 2(\lambda,\omega)e^{i\xi\sb 2(\lambda,\omega) x},
\qquad
\Eta\sb 1(\lambda,\omega)e^{-i\xi\sb 1(\lambda,\omega) x},
\qquad
\Eta\sb 2(\lambda,\omega)e^{-i\xi\sb 2(\lambda,\omega) x}
\]
satisfy the equation $(\bJ(\bD_m-\omega)-\lambda)\psi(x)=0$
(and thus \eqref{def-M0}).

By \eqref{omega-lambda-positive}, we see that
\[
\xi_1>\xi_2\ge 0
\quad \text{for} \quad
\lambda\in i\R,\ \ \abs{\lambda}\ge 1+\abs{\omega};
\qquad
\abs{\xi_2}>\abs{\xi_1}
\quad \text{for} \quad
\lambda\in i\R,\ \ \abs{\lambda}\le 1-\abs{\omega}.
\]
We denote
\begin{equation}\label{def-kappa}
\kappa_1=\abs{\Im\xi_1},
\qquad
\kappa_2=\abs{\Im\xi_2};
\end{equation}
then one has
\[
\kappa_2>\kappa_1\ge 0
\qquad
\mbox{for}
\quad
\lambda\in i\R,
\quad
\abs{\lambda}\le 1-\abs{\omega}.
\]

\begin{proposition}\label{prop-jost}
Let $\omega\in\varOmega$.
There are the Jost solutions
$\f\sb j(x,\lambda,\omega)$,
$\g\sb j(x,\lambda,\omega)$,
$\F\sb j(x,\lambda,\omega)$,
$\G\sb j(x,\lambda,\omega)$,
$j=1,\,2$,
which satisfy the equation
$(\bJ\bL(\omega)-\lambda)u=0$
and have the following properties.
There is $c(\omega)<\infty$
such that
\begin{itemize}
\item
For
$\lambda\in i\R$,
$\abs{\lambda}\le 1-\abs{\omega}$,
\begin{equation}\label{p10}
\abs{e^{\kappa_j x}\f_j(x,\lambda,\omega)-\Xi_j(\lambda,\omega)}
+
\abs{e^{-\kappa_j x}\F_j(x,\lambda,\omega)-\Eta_j(\lambda,\omega)}
\le
c(\omega)e^{-2k\updelta_\omega x},
\qquad
x\ge 0,
\qquad
j=1,\,2.
\end{equation}
\item
For
$\lambda\in i\R$,
$1-\abs{\omega}\le\abs{\lambda}\le 1+\abs{\omega}$,
\[
\abs{e^{-i\xi_1 x}\f_1(x,\lambda,\omega)-\Xi_1(\lambda,\omega)}
+
\abs{e^{i\xi_1 x}\F_1(x,\lambda,\omega)-\Eta_1(\lambda,\omega)}
\le
c(\omega)e^{-2k\updelta_\omega x},
\qquad
x\ge 0,
\]
\[
\abs{e^{\kappa_2 x}\f_2(x,\lambda,\omega)-\Xi_2(\lambda,\omega)}
+
\abs{e^{-\kappa_2 x}\F_2(x,\lambda,\omega)-\Eta_2(\lambda,\omega)}
\le
c(\omega)e^{-2k\updelta_\omega x},
\qquad
x\ge 0.
\]
\item
For
$\lambda\in i\R$,
$\abs{\lambda}\ge 1+\abs{\omega}$,
\[
\abs{e^{-i\xi_j x}\f_1(x,\lambda,\omega)-\Xi_j(\lambda,\omega)}
+
\abs{e^{i\xi_j x}\F_1(x,\lambda,\omega)-\Eta_j(\lambda,\omega)}
\le
c(\omega)e^{-2k\updelta_\omega x},
\qquad
x\ge 0,
\qquad
j=1,\,2.
\]
\item
For $\lambda\in i\R$,
\[
\abs{\f_j(x,\lambda,\omega)}
+
\abs{\F_j(x,\lambda,\omega)}
\le
c(\omega)
\big(
\langle x\rangle+e^{\kappa_2\abs{x}}
\big),
\qquad
x\le 0,
\qquad
j=1,\,2.
\]
\item
For $\lambda\in i\R$,
$\abs{\lambda}\ge 3$,
\begin{equation}\label{f-f-c}
\abs{\f_j(x,\lambda,\omega)}
+
\abs{\F_j(x,\lambda,\omega)}
\le
c(\omega),
\qquad
x\in\R,
\qquad
j=1,\,2.
\end{equation}
\item
One can define the Jost solutions
with appropriate asymptotics
as $x\to-\infty$ by
\begin{equation}\label{f-is-g}
\g\sb j(x)
=
\bmupbeta
\f\sb j(-x),
\qquad
\G\sb j(x)
=
\bmupbeta
\F\sb j(-x),
\qquad
x\in\R,
\qquad
j=1,\,2.
\end{equation}
\end{itemize}
\end{proposition}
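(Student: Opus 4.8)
The plan is to reduce the eigenvalue problem $(\bJ\bL(\omega)-\lambda)u=0$ to the first--order system $(\p\sb x-\mathcal{M}(x,\lambda,\omega))\psi=0$ of \eqref{def-M}, in which, by \eqref{paw}--\eqref{def-M}, the matrix $\mathcal{M}=\mathcal{M}_0+\bmupalpha\bJ\bW$ differs from the constant--coefficient matrix $\mathcal{M}_0(\lambda,\omega)$ only by the exponentially small term $\bmupalpha\bJ\bW$, which by Lemma~\ref{lemma10} satisfies $\norm{\bmupalpha\bJ\bW(\cdot,\omega)}\le C\sb\omega e^{-2k\updelta_\omega\abs{x}}$. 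First I would diagonalize $\mathcal{M}_0$ using Lemma~\ref{lemma-m-zero}: its four eigenvectors are the vectors $\Xi_1,\Xi_2,\Eta_1,\Eta_2$ of \eqref{def-xi1-eta1}--\eqref{def-xi2-eta2}, with eigenvalues $i\xi_1,i\xi_2,-i\xi_1,-i\xi_2$; on the spectral gap $\abs{\lambda}\le 1-\abs{\omega}$ the real parts of these four numbers are $-\kappa_1,-\kappa_2,\kappa_1,\kappa_2$ with $\kappa_2>\kappa_1\ge 0$ (cf.\ \eqref{def-kappa} and the inequalities stated just before the Proposition), and on the two remaining ranges $1-\abs{\omega}\le\abs{\lambda}\le 1+\abs{\omega}$ and $\abs{\lambda}\ge 1+\abs{\omega}$ they are reordered accordingly (with some of the $\xi_j$ becoming real).

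Next, for each of the four modes I would construct the associated Jost solution as the fixed point of a Volterra--type integral equation. For $\f_j$, the substitution $\psi(x)=e^{i\xi_j x}\big(\Xi_j+m_j(x)\big)$ turns the system into $\p\sb x m_j=(\mathcal{M}_0-i\xi_j)m_j+\bmupalpha\bJ\bW(x)\big(\Xi_j+m_j\big)$, and the solution vanishing as $x\to+\infty$ solves
\[
m_j(x)=\int_{+\infty}^{x}e^{(\mathcal{M}_0-i\xi_j)(x-y)}\Pi^{+}_{j}\,\bmupalpha\bJ\bW(y)\big(\Xi_j+m_j(y)\big)\,dy
+\int_{x_\ast}^{x}e^{(\mathcal{M}_0-i\xi_j)(x-y)}\Pi^{-}_{j}\,\bmupalpha\bJ\bW(y)\big(\Xi_j+m_j(y)\big)\,dy ,
\]
where $\Pi^{+}_{j},\Pi^{-}_{j}$ are the spectral projections of $\mathcal{M}_0-i\xi_j$ onto the eigenvalues of nonnegative and of negative real part; the base point $x_\ast\in\{0,+\infty\}$ in the second integral is chosen, according to whether the relevant spectral gap exceeds $2k\updelta_\omega$ or not, so that the propagator stays bounded \emph{and} that contribution keeps the decay rate $e^{-2k\updelta_\omega x}$. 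The exponential decay of $\bW$ then makes the right--hand side a contraction on $L^\infty([0,\infty))$, uniformly for $\lambda$ in compact subsets of each of the three ranges (the identity $\det e^{\mathcal{M}_0 t}=1$ coming from $\tr\mathcal{M}_0=0$, cf.\ Lemma~\ref{lemma-trace-zero}, together with the spectral data controls $e^{(\mathcal{M}_0-i\xi_j)t}$ on each spectral subspace, with at most a polynomial loss at the thresholds). Reading off $\abs{m_j(x)}\le c(\omega)\int_x^\infty e^{-2k\updelta_\omega y}\,dy\le c(\omega)e^{-2k\updelta_\omega x}$ gives \eqref{p10} and the analogous bounds in the other two ranges; the solutions $\F_j$, with leading term $\Eta_j e^{-i\xi_j x}$, are produced identically after interchanging the roles of $\Pi^{+}_{j}$ and $\Pi^{-}_{j}$.

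Having built $\f_j,\F_j$ on $x\ge 0$, I would extend them to all of $\R$ by solving the ODE from $x=0$; on $x\le 0$ the same diagonalization --- with $\bW$ still exponentially decaying as $x\to-\infty$ --- shows that every solution is, up to exponentially small errors, a linear combination of the four unperturbed modes, the fastest growing of which is $\sim e^{\kappa_2\abs{x}}$, with an extra factor $\langle x\rangle$ only at the threshold values of $\lambda$ where two eigenvalues of $\mathcal{M}_0$ coincide and $\mathcal{M}_0$ acquires a Jordan block; this yields the bound for $x\le 0$. When $\abs{\lambda}\ge 3$ one has $\abs{\lambda}\ge 1+\abs{\omega}$, so all $\xi_j$ are real, $\kappa_1=\kappa_2=0$, the unperturbed modes are purely oscillatory and the four eigenvalues are simple and well separated; the fixed point and the continuation then give solutions bounded on all of $\R$, which is \eqref{f-f-c}. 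Finally, \eqref{f-is-g} is immediate from Lemma~\ref{lemma-paw}: the map $\psi(x)\mapsto\bmupbeta\psi(-x)$ carries solutions of \eqref{def-M} to solutions, and since $\Eta_j=\bmupbeta\Xi_j$ (and $\bmupbeta^2=I$) it carries $\f_j,\F_j$ to solutions with the stated asymptotics as $x\to-\infty$, which we denote $\g_j,\G_j$.

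The main obstacle is the book-keeping near the four thresholds $\abs{\lambda}=1\pm\abs{\omega}$ (here, with $\omega\ge 0$ and $\Re\lambda\le 0$, the points $\lambda=\pm i(1-\omega)$ and $\lambda=\pm i(1+\omega)$): there $\mathcal{M}_0$ ceases to be semisimple, the splitting into ``recessive'' and ``dominant'' subspaces degenerates, the bare $e^{\mathcal{M}_0 t}$ estimates must be replaced by $\langle t\rangle$--type estimates, and one must re-choose the integration contour in each integral equation so that the fixed point still has precisely the claimed leading term and the sharp $e^{-2k\updelta_\omega x}$ remainder --- all of this uniformly in $\lambda$ across the boundaries between the three regimes. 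Everything else --- the contraction estimates, the decay rates, and the symmetry relation --- is routine given Lemmas~\ref{lemma10}, \ref{lemma-m-zero}, \ref{lemma-paw}, and~\ref{lemma-trace-zero}.
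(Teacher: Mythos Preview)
Your approach is correct and follows the same blueprint as the paper --- rewrite the eigenvalue problem as the first-order system \eqref{def-M}, iterate a Duhamel/Volterra integral equation relative to $\mathcal{M}_0$, and invoke Lemma~\ref{lemma-paw} for \eqref{f-is-g}. The one substantive difference is that you split the propagator via the spectral projections $\Pi^\pm_j$ of $\mathcal{M}_0-i\xi_j$ and allow a variable base point $x_\ast\in\{0,+\infty\}$ depending on whether the relevant spectral gap exceeds $2k\updelta_\omega$. The paper does not do this: it observes that under Assumption~\ref{ass-1} ($k\ge 2$, $|\omega|<1/3$) one has $k\updelta_\omega\ge 2\sqrt{8/9}>1$, while every $\zeta\in\sigma(\mathcal{M}_0(\lambda,\omega))$ with $\lambda\in i\R$ satisfies $|\Re\zeta|\le 1$ by Lemma~\ref{lemma-m-zero}; hence $|\Re(\zeta-\varkappa)|\le 2<2k\updelta_\omega$ for every pair of eigenvalues, the decay of $\bW$ dominates every mode, and the single integral $\int_x^{+\infty}e^{(\mathcal{M}_0-\varkappa)(x-y)}\bmupalpha\bJ\bW(y)\,(\cdot)\,dy$ converges on each eigenspace (the threshold Jordan block contributes only an innocuous $\langle x-y\rangle$ factor). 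So your $\Pi^\pm$ machinery --- and the ``main obstacle'' you flag near the thresholds --- simply does not arise here; your version is more general (it would handle slower-decaying potentials) but unnecessary in this setting. For \eqref{f-f-c} the paper uses the simplex trick on the full Volterra series, bounding the $n$-fold ordered integral by $\frac{1}{n!}\big(\int_\R\|\bW\|\big)^n$ and summing to $\exp(\|\bW\|_{L^1})$ uniformly in $\lambda$; your continuation-from-$x=0$ argument also works, since for $|\lambda|\ge 3$ all modes oscillate and the eigenbasis $\Xi_j,\Eta_j$ stays uniformly well conditioned.
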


Above,
$\updelta_\omega=\sqrt{1-\omega^2}$
(Cf. \eqref{def-up-delta-omega}).

\begin{proof}
The proof is quite standard.
However,
since the decay rate of the potential $\bW$
depends on $\omega$ and $k$ (Cf. Assumption~\ref{ass-1}),
we choose to provide the details.
Given $\varkappa\in\sigma(\mathcal{M}_0(\lambda,\omega))$,
with $\omega\in\varOmega$ and $\lambda\in i\R$,
let
$\Xi\in\C^4$ be a corresponding eigenvector,
with $\abs{\Xi}=1$.
To find a solution
$\psi(x)\sim \Xi e^{\varkappa x}$,
$x\to+\infty$ of \eqref{def-M0}, we
define $\xi(x)$ by
\[
\psi(x)=e^{\varkappa x}\xi(x),
\qquad \text{so that} \quad
\xi\at{x=+\infty}=\Xi;
\]
then
$
\p\sb x\xi
=(\mathcal{M}_0-\varkappa)\xi+\bmupalpha\bJ\bW\xi,
$ and hence we can write
\[
\p\sb x(e^{-(\mathcal{M}_0-\varkappa)x}\xi)
=e^{-(\mathcal{M}_0-\varkappa)x}\bmupalpha\bJ \bW\xi.
\]
We construct $\xi(x)$ in the form of the power series
$
\xi(x)=\sum\sb{n=0}\sp\infty\xi_n(x),
$
where
$\xi_0=\Xi$
and
\[
\p\sb x(e^{-(\mathcal{M}_0-\varkappa)x}\xi_n(x))
=e^{-(\mathcal{M}_0-\varkappa)x}\bmupalpha\bJ\bW(x,\omega)\xi_{n-1}(x),
\qquad
\xi_n(+\infty)=0,
\qquad
n\ge 1;
\]
hence
\[
\xi_n(x)
=
-\int_x^{+\infty}
e^{(\mathcal{M}_0-\varkappa)(x-y)}
\bmupalpha\bJ\bW(y,\omega)\xi_{n-1}(y)\,dy,
\qquad n\ge 1.
\]
Let $P\sb\zeta$
denote the Riesz projector onto the eigenspace
corresponding to $\zeta\in\sigma(\mathcal{M}_0)$.
Then, for $x\ge 0$,
\begin{eqnarray}\label{int-y}
\abs{P\sb\zeta \xi\sb n(x)}
&\le&
\int_x^{+\infty}
\norm{P\sb\zeta e^{(\mathcal{M}_0-\varkappa)(x-y)}}
\norm{\bW(y,\omega)}\sb{\End(\C^4)}
\abs{\xi_{n-1}(y)}\,dy
\nonumber
\\
&\le&
\sup\sb{y\ge x}\abs{\xi_{n-1}(y)}
\int_x^{+\infty}
a
e^{(x-y)\Re(\zeta-\varkappa)}
\langle x-y\rangle
K e^{-2k\updelta_\omega y}\,dy
\le
c
e^{-2k\updelta_\omega x}
\sup\sb{y\ge x}\abs{\xi_{n-1}(y)},
\end{eqnarray}
for some $c=c(\omega,K)<\infty$.
Above, we used the bound
$\norm{P_\zeta e^{(\mathcal{M}_0-\varkappa)x}}
\le a e^{x\Re(\zeta-\varkappa)}\langle x\rangle$,
with some $a<\infty$
(which
depends on $\omega$ but
does not depend on $\zeta$),
with the factor $\langle x\rangle$
due to the possibility of the Jordan block of $\mathcal{M}_0$
(when $\zeta=0$).
For the convergence of the integration in $y$,
we used the bound \eqref{w-small}
and the inequalities
\begin{equation}\label{t-i}
\abs{\Re\zeta}\le k\updelta_\omega,
\qquad
\abs{\Re\varkappa}\le k\updelta_\omega,
\end{equation}
which are trivially satisfied under conditions
of Assumption~\ref{ass-1}:
one has $k\ge 2$, $\abs{\omega}<1/3$,
hence $k\updelta_\omega\ge 2\sqrt{8/9}$,
while $\zeta\in\sigma(\mathcal{M}_0(\lambda,\omega))$
for any $\lambda\in i\R$, $\omega\in(-1,1)$,
satisfy $\abs{\Re\zeta}\le 1$
(Cf. Lemma~\ref{lemma-m-zero}).


Then the integration in $y$
in \eqref{int-y}
can be estimated as follows:
\[
\int_x^{+\infty} e^{(x-y)\Re(\zeta-\varkappa)}
\langle x-y\rangle e^{-2k\updelta_\omega y}
\,dy
=e^{-2k\updelta_\omega x}\int_0^{+\infty} e^{-z\Re(\zeta-\varkappa)}
\langle z\rangle e^{-2k\updelta_\omega z}
\,dz
\]
\[
\le
e^{-2k\updelta_\omega x}
\Big(
\frac{1}{2(k-1)\updelta_\omega}
+\frac{1}{(2(k-1)\updelta_\omega)^2}
\Big).
\]
We conclude that
\[
\sup\sb{y\ge x}\abs{\xi\sb n(y)}
=
\sup\sb{y\ge x}
\Abs{
\sum\sb{\zeta\in\sigma(\mathcal{M}_0)}
P\sb\zeta\xi\sb n(y)
}
\le
\sum\sb{\zeta\in\sigma(\mathcal{M}_0)}
\sup\sb{y\ge x}\abs{P\sb\zeta\xi\sb n(y)}
\le
4c
e^{-2k\updelta_\omega x}\sup\sb{y\ge x}\abs{\xi\sb{n-1}(y)},
\qquad
x\ge 0.
\]
Therefore, there is $C<\infty$
such that
$
\sum\sb{n=1}\sp{\infty}\abs{\xi\sb n(x)}
\le
C e^{-2k\updelta_\omega x},
$
for all $x\ge 0$,
hence
\[
\abs{\xi(x)-\Xi}
=C e^{-2k\updelta_\omega x},
\qquad
x\ge 0.
\]

Let us prove the uniform bounds \eqref{f-f-c}.
Let us write \eqref{def-M} in the form
\begin{equation}\label{def-M1}
(\p\sb x-\mathcal{M}_0(\lambda,\omega))\psi
=\bmupalpha\bJ\bW(x,\omega)\psi.
\end{equation}
Using the Green function for the operator
$\p\sb x-\mathcal{M}_0(\lambda,\omega)$,
which is given by
\[
\mathscr{G}(x,y,\lambda,\omega)
=
\left(\Xi_1\otimes \theta_1\sp\ast e^{i(x-y)\xi_1}
+\Xi_2\otimes \theta_2\sp\ast e^{i(x-y)\xi_2}
+\Eta_1\otimes\eta_1\sp\ast e^{-i(x-y)\xi_1}
+\Eta_2\otimes\eta_2\sp\ast e^{-i(x-y)\xi_2}
\right)
\Theta(x-y),
\]
where
$\Theta$ is the Heaviside step-function
and
$\theta_j,\,\eta_j\in\C^4$, $j=1,\,2$, is the basis
dual to $\Xi_j,\,\Eta_j\in\C^4$,
one can construct the solutions
$\f_j(x,\lambda,\omega)$,
$\F_j(x,\lambda,\omega)$,
in the form of the
power series
\begin{equation}\label{series}
\psi=\sum\sb{n=0}\sp{\infty}\psi_n,
\end{equation}
with $\psi_0(x)=\Xi_j e^{i\xi_j x}$
or $\psi_0(x)=\Eta_j e^{-i\xi_j x}$
(according to \eqref{f-is-g},
these are asymptotics of $\f_j(x),\,\F_j(x)$ for $x\gg 1$),
and with $\psi_n(x)$, $n\ge 1$ solving
\[
(\p\sb x-\mathcal{M}_0(\lambda,\omega))\psi_n(x)
=\bmupalpha\bJ\bW(x,\omega)\psi_{n-1}(x).
\]
For definiteness, we will consider $\f_1(x)$ only
(other functions are considered in the same way).
For any $x\in\R$,
the series
\eqref{series}
converges due to the estimate
\begin{eqnarray}
\abs{\psi_n(x)}
&\le&
\int_x^{\infty}
\norm{\bW(x_1)}\sb{\End(\C^4)}
\abs{\psi_{n-1}(x_1)}\,dx_1
\nonumber
\\
&\le&
\int_{x}^{\infty}\int_{x_1}^{\infty}
\Norm{\bW(x_1)}\sb{\End(\C^4)}
\Norm{\bW(x_2)}\sb{\End(\C^4)}
\abs{\psi_{n-2}(x_2)}\,dx_1\,dx_2
\le\dots
\nonumber
\\
&\le&
\mathop{\int\dots\int}\sb{x<x_1<\dots<x_n<\infty}
\Big(\prod\sb{l=1}\sp{n}
\norm{\bW(x_l)}\sb{\End(\C^4)}\Big)
\abs{\psi_0(x_n)}
\,dx_1\dots dx_n
\nonumber
\\
&\le&
\frac{1}{n!}
\int\sb{x_1>x}\dots \int\sb{x_n>x}
\Big(\prod\sb{l=1}\sp{n}
\norm{\bW(x_l)}\sb{\End(\C^4)}\Big)
\abs{\psi_0(x_n)}
\,dx_1\dots dx_n
\le
\frac{(\int_{x}^{\infty}\norm{\bW(y)}\sb{\End(\C^4)}\,dy)^n}{n!},
\nonumber
\end{eqnarray}
where we represented the integration over the simplex
$x<x_1<\dots<x_n<\infty$
in $\R^n$
as a fraction of the integration over the quadrant
$x_l>x$, $1\le l\le n$,
and substituted
$\abs{\psi_0(x_n)}=\abs{\Xi_1}=1$.
Therefore,
$
\abs{\psi(x)}\le\sum\sb{n\ge 0}\abs{\psi_n(x)}
\le \mathop{\rm exp}\left(\int_{\R}
\norm{\bW(y)}\sb{\End(\C^4)}\,dy\right),
$
for any $x\in\R$.
This proves \eqref{f-f-c}.

Finally, Lemma~\ref{lemma-paw}
allows to use \eqref{f-is-g} to obtain
the Jost solutions with required asymptotic
behaviour at $-\infty$.
\end{proof}

\begin{definition}\label{def-evans}
We define the Evans function by
\begin{equation}\label{def-Z}
E(\lambda,\omega)
=
\det\big[\f_1( x,\lambda,\omega),\f_2( x,\lambda,\omega),
\g_1( x,\lambda,\omega),\g_2( x,\lambda,\omega)\big].
\end{equation}
By Lemma~\ref{lemma-trace-zero}
and Liouville's formula,
the right-hand side of \eqref{def-Z}
does not depend on $x\in\R$.
\end{definition}

\begin{lemma}\label{lemma-fg}
Fix
$\omega\in\varOmega$.
\begin{enumerate}
\item
\label{lemma-fg-1}
Let $\lambda\in i\R$,
$\abs{\lambda}\in(1-\abs{\omega},1+\abs{\omega})$.
Then
$
E(\lambda,\omega)
=0
$
at some $\lambda\in i\R$,
$\abs{\lambda}\in(1-\abs{\omega},1+\abs{\omega})$,
if and only if
$\lambda$ is an $L^2$ eigenvalue of $\bJ\bL$.

\item
\label{lemma-fg-2}
At $\lambda=\pm i(1 +\abs{\omega})$,
one has
$E(\lambda,\omega)=0$
only if
there is a generalized $L\sp\infty$-eigenfunction
corresponding to $\lambda$,
which has the asymptotics
$\psi\sim a\Xi_2$ as $x\to+\infty$,
$\psi\sim b\Eta_2$ as $x\to-\infty$.
\end{enumerate}
\end{lemma}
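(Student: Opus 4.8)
The plan is to derive both parts from the Wronskian representation of the Evans function (Definition~\ref{def-evans}) together with a conserved ``flux'' that is available because $\lambda\in i\R$. Assume $E(\lambda,\omega)=0$. Since $E$ does not depend on $x$, the four Jost solutions of $(\bJ\bL(\omega)-\lambda)u=0$ are linearly dependent at every $x$, and, by uniqueness for this linear ODE, with a single $x$-independent relation $c_1\f_1+c_2\f_2+d_1\g_1+d_2\g_2\equiv 0$, $(c_1,c_2,d_1,d_2)\ne 0$. By Proposition~\ref{prop-jost}, near $+\infty$ one has $\f_1\sim\Xi_1 e^{i\xi_1 x}$ with $\xi_1\ne 0$, while $\f_2\sim\Xi_2 e^{-\kappa_2 x}$ in case~\itref{lemma-fg-1} and $\f_2\to\Xi_2$ in case~\itref{lemma-fg-2}; these distinct asymptotics make $\f_1,\f_2$ independent, and likewise $\g_j=\bmupbeta\f_j(-\cdot)$ are independent near $-\infty$. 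Hence $(c_1,c_2)\ne 0$, $(d_1,d_2)\ne 0$, and $\psi:=c_1\f_1+c_2\f_2=-(d_1\g_1+d_2\g_2)$ is a nontrivial solution.

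The main tool is that for every solution of $(\bJ\bL(\omega)-\lambda)u=0$ with $\lambda\in i\R$ the scalar $\psi^\ast(\bJ\bmupalpha)\psi$ is constant in $x$: writing the equation as the first-order system $\psi'=\mathcal{M}(x,\lambda,\omega)\psi$ of \eqref{def-M} and using that $\bJ\bmupalpha$ is skew-adjoint with $(\bJ\bmupalpha)^2=-I_4$, $\bmupalpha(\bJ\bmupalpha)=\bJ$, and that $\bmupbeta-\omega+\bW$ is self-adjoint, one finds $\frac{d}{dx}\big(\psi^\ast(\bJ\bmupalpha)\psi\big)=-(\lambda+\bar\lambda)\,\psi^\ast\bJ\psi=0$. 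I would evaluate this constant at both ends. As $x\to+\infty$, Proposition~\ref{prop-jost} gives $\psi^\ast(\bJ\bmupalpha)\psi\to\abs{c_1}^2\,\Xi_1^\ast(\bJ\bmupalpha)\Xi_1$ (in case~\itref{lemma-fg-2}, where $\kappa_2=0$, this also uses the identities $\Xi_1^\ast(\bJ\bmupalpha)\Xi_2=\Xi_2^\ast(\bJ\bmupalpha)\Xi_2=0$, read off from \eqref{def-xi1-eta1}--\eqref{def-xi2-eta2} at $\lambda=i(1+\abs\omega)$), and a direct computation from \eqref{def-xi1-eta1} gives $\Xi_1^\ast(\bJ\bmupalpha)\Xi_1=-4i\,\xi_1(\abs\lambda-1+\abs\omega)/c_1^2\ne 0$. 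As $x\to-\infty$, using $\g_j=\bmupbeta\f_j(-\cdot)$, $\Eta_j=\bmupbeta\Xi_j$, and $\bmupbeta(\bJ\bmupalpha)\bmupbeta=-\bJ\bmupalpha$ (which turns the above into $\Eta_1^\ast(\bJ\bmupalpha)\Eta_1=-\Xi_1^\ast(\bJ\bmupalpha)\Xi_1$, etc.), one gets $\psi^\ast(\bJ\bmupalpha)\psi\to-\abs{d_1}^2\,\Xi_1^\ast(\bJ\bmupalpha)\Xi_1$. Equating the two limits forces $\abs{c_1}^2=-\abs{d_1}^2$, so $c_1=d_1=0$.

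Hence $\psi=c_2\f_2=-d_2\g_2$ with $c_2,d_2\ne 0$. In case~\itref{lemma-fg-1}, $\psi$ decays exponentially as $x\to\pm\infty$, so $\psi\in L^2$ is an eigenfunction of $\bJ\bL$. In case~\itref{lemma-fg-2}, $\psi$ is bounded with $\psi\to c_2\Xi_2$ as $x\to+\infty$ and $\psi\to-d_2\Eta_2$ as $x\to-\infty$, i.e.\ the asserted generalized $L^\infty$-eigenfunction. For the converse in~\itref{lemma-fg-1}: an $L^2$ eigenfunction must lie in the space of solutions decaying at $+\infty$ and in that of solutions decaying at $-\infty$; by the exponential dichotomy for $(\p_x-\mathcal{M}(x))u=0$ with $\mathcal{M}(x)\to\mathcal{M}_0$ exponentially, and since in the gap $\mathcal{M}_0$ has exactly one eigenvalue of negative and one of positive real part ($\mp\kappa_2$, the other two being purely imaginary), each of these spaces is one-dimensional --- spanned by $\f_2$ and $\g_2$ --- so $\f_2\parallel\g_2$ and $E(\lambda,\omega)=\det[\f_1,\f_2,\g_1,\g_2]=0$.

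The step I expect to be the main obstacle is the asymptotic bookkeeping at the two ends: one must ensure that only genuinely non-decaying modes survive in the limits --- which uses $\kappa_2<2k\updelta_\omega$, guaranteed here by $k\ge 2$, $\abs\omega<1/3$ and $\kappa_2\le 1$ --- and one must correctly verify the non-vanishing of $\Xi_1^\ast(\bJ\bmupalpha)\Xi_1$ and the vanishing of $\Xi_1^\ast(\bJ\bmupalpha)\Xi_2$ and $\Xi_2^\ast(\bJ\bmupalpha)\Xi_2$ exactly at the embedded threshold; everything else is routine linear ODE theory.
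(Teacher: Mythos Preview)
Your proposal is correct and takes essentially the same approach as the paper: your conserved flux $\psi^\ast(\bJ\bmupalpha)\psi$ is, up to a factor of $i$, exactly the paper's Krein current $\varPhi^\ast\varSigma\bmupalpha\varPhi$ (with $\varSigma=i\bJ$), and the algebraic identities you invoke ($\Xi_1^\ast(\bJ\bmupalpha)\Xi_1\ne 0$, $\Xi_1^\ast(\bJ\bmupalpha)\Xi_2=0$, $\Xi_2^\ast(\bJ\bmupalpha)\Xi_2=0$ at threshold, and $\bmupbeta(\bJ\bmupalpha)\bmupbeta=-\bJ\bmupalpha$) are the same ones the paper uses. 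The only cosmetic difference is that you verify constancy of the flux by direct differentiation of the bilinear form, while the paper packages it as the Noether current of an auxiliary Dirac system; your write-up is in fact a bit more streamlined.
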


\begin{remark}
The statement of the lemma at the thresholds
is non-trivial
since at the threshold points the solution to
$(\bJ\bL-\lambda)\psi=0$
which is bounded for $x\to +\infty$
could be linearly growing as $x\to-\infty$.
\end{remark}

\begin{proof}
Let us prove Part~\ref{lemma-fg-1}.
Consider the case
$\pm\lambda\in i(1-\abs{\omega},1+\abs{\omega})$.
Due to the asymptotics of the Jost solutions,
if $\f_1$ and $\g_1$ are linearly dependent,
then $\f_1=C\g_1=\psi$ is the exponentially decaying
solution to \eqref{def-M}
and thus $\lambda$ is an $L^2$ eigenvalue.
This proves the ``if'' statement of the lemma.

Let us prove the converse statement.
If $\det[\f_1,\f_2,\g_1,\g_2]=0$
for some $\lambda\in i\R$,
then there are
$a\sb 1,\,a\sb 2,\,b\sb 1,\,b\sb 2\in\C$,
not all of them equal to zero, one has
\begin{equation}\label{def-varphi}
\varPhi(x):=
\sum\sb{j=1}\sp 2 a\sb j \f\sb j(x,\lambda,\omega)
=\sum\sb{j=1}\sp 2 b\sb j \g\sb j(x,\lambda,\omega),
\qquad
x\in\R.
\end{equation}
Clearly, $\varPhi$ thus defined
is not identically zero.

Define
\[
\varSigma
=i\bJ
=
\begin{bmatrix}0&i I_2\\-i I_2&0\end{bmatrix}.
\]
Let us consider the auxiliary Dirac equation
\begin{equation}\label{aux-eqn}
i\varSigma\p\sb t\Psi
=\bL\Psi,
\qquad
\Psi(x,t)\in\C^4,
\quad x\in\R,
\end{equation}
where
$
\bL=\bJ\bmupalpha\p\sb x+\bmupbeta+\bW-\omega.
$
This is a Hamiltonian system
with the Hamiltonian density
\[
\eur{h}
=\Psi\sp\ast\bL\Psi
=\Psi\sp\ast(\bJ\bmupalpha\p\sb x+\bmupbeta+\bW-\omega)\Psi
\]
and the Lagrangian density
\[
\eur{l}
=\Psi\sp\ast(i\varSigma\p\sb t-\bL)\Psi.
\]

If $\varPhi\in C^1(\R,\C^4)$ satisfies
$\lambda\varPhi=\bJ\bL\varPhi$,
which we write as
$(i\lambda)(i\bJ)\varPhi
=
-\lambda\bJ\varPhi=\bL\varPhi$,
then we have
\[
\Omega\varSigma\varPhi=\bL\varPhi,
\qquad
\Omega:=i\lambda\in\R.
\]
Thus,
$\Psi(x,t)=\varPhi(x)e^{-i\Omega t}$
is a ``solitary wave solution'' to \eqref{aux-eqn},
except that $\varPhi$ is not necessarily in $L^2$.

Equation \eqref{aux-eqn} conserves
\emph{the Krein charge};
its density is
\begin{equation}\label{aux-krein-charge-density}
\sigma(x,t)=\Psi\sp\ast\varSigma\Psi
=\varPhi\sp\ast\varSigma\varPhi,
\end{equation}
while the density of the corresponding current is
\begin{equation}\label{aux-krein-current-density}
\eur{j}(x,t)=\Psi\sp\ast\varSigma\bmupalpha\Psi
=\varPhi\sp\ast\varSigma\bmupalpha\varPhi.
\end{equation}

\begin{remark}
We call the quantity
$\langle\Psi,\varSigma\Psi\rangle$
the ``Krein charge''
in view of its relation
to the Krein index considerations.
Namely, the relation $\bJ\bL\varPhi=\lambda\varPhi$ implies that
$\langle\varPhi,\bL\varPhi\rangle
=-\lambda\langle\varPhi,\bJ\varPhi\rangle$,
with $\langle\varPhi,\bL\varPhi\rangle$
real and $\langle\varPhi,\bJ\varPhi\rangle$
purely imaginary;
hence $\Re\lambda\ne 0$ leads to
$\langle\varPhi,\bJ\varPhi\rangle=0$,
$\langle\varPhi,\bL\varPhi\rangle=0$.
Thus, the Krein signature is zero
($\bL$ is not sign-definite
on the corresponding eigenspace)
for any eigenvalue away from the imaginary axis.
(The above could also be interpreted as follows.
We could say that
if $\Psi=\varPhi(x)e^{-i\Omega t}$
(with $\Omega=i\lambda$)
is a solitary wave solution
to \eqref{aux-eqn} and $\Im\Omega=\Re\lambda\ne 0$,
then the conservation of the ``Krein charge''
$\langle\Psi(t),\varSigma\Psi(t)\rangle
=\langle\varPhi,\varSigma\varPhi\rangle e^{2\Im\Omega t}$
requires that this charge is zero,
$\langle\varPhi,\varSigma\varPhi\rangle=0$.)
It follows that purely imaginary eigenvalues
$\lambda\in i\R\setminus 0$
with nonzero Krein  signature,
$\langle\varPhi,i\bJ\varPhi\rangle\ne 0$,
can not bifurcate off the imaginary axis
into the complex plane.
\end{remark}



Since the Krein charge density does not depend on time,
the local conservation of the Krein charge
in the system \eqref{aux-eqn}
leads to the equality of the Krein current
\eqref{aux-krein-current-density}
evaluated at the endpoints of the interval $(-l,l)$, $l>0$.
Therefore, taking into account that
\[
\varSigma\Xi_1=-\Xi_1,
\qquad
\varSigma\Xi_2=\Xi_2,
\qquad
\varSigma\Eta_1=-\Eta_1,
\qquad
\varSigma\Eta_2=\Eta_2,
\]
we compute
for $\varPhi$ from \eqref{def-varphi}:
\begin{eqnarray}\label{krein-current-conservation}
0
=\lim\sb{l\to+\infty}
\varPhi\sp\ast\varSigma\bmupalpha\varPhi\vert\sp l\sb{-l}
=
\lim\sb{l\to+\infty}
\big(
(a_1\Xi_1e^{i\xi_1 x}
+
a_2\Xi_2 e^{-\kappa_2 x})\sp\ast
\bmupalpha
(
-a_1\Xi_1 e^{i\xi_1 x}
+
a_2\Xi_2 e^{-\kappa_2 x}
)
\big)
\vert\sb{x=l}
\nonumber
\\
-
\lim\sb{l\to+\infty}
\big(
(b_1\Eta_1 e^{-i\xi_1 x}
+
b_2\Eta_2 e^{-\kappa_2\abs{x}})\sp\ast
\bmupalpha
(
-b_1\Eta_1 e^{-i\xi_1 x}
+
b_2\Eta_2 e^{-\kappa_2\abs{x}}
)
\big)
\vert\sb{x=-l}
\nonumber
\\
=
\lim\sb{l\to+\infty}
\big(
(a_1\Xi_1e^{i\xi_1 x}
+
a_2\Xi_2 e^{-\kappa_2 x})\sp\ast
\bmupalpha
(
-a_1\Xi_1 e^{i\xi_1 x}
+
a_2\Xi_2 e^{-\kappa_2 x}
)
\big)
\vert\sb{x=l}
\nonumber
\\
-
\lim\sb{l\to+\infty}
\big(
(b_1\Xi_1 e^{-i\xi_1 x}
+
b_2\Xi_2 e^{-\kappa_2\abs{x}})\sp\ast
\bmupalpha
(
b_1\Xi_1 e^{-i\xi_1 x}
-
b_2\Xi_2 e^{-\kappa_2\abs{x}}
)
\big)
\vert\sb{x=-l}.
\end{eqnarray}
In the last relation, we took into account that
$\Eta_j=\bmupbeta\Xi_j$
and that $\bmupbeta$ anticommutes with
$\bmupalpha$.
Taking into account that
$\Xi_1\sp\ast\bmupalpha\Xi_2
=\Xi_2\sp\ast\bmupalpha\Xi_1
=0$,
we rewrite the above as
\begin{eqnarray}\label{krein-current-conservation-2}
0=
(\abs{a_1}^2
+\abs{b_1}^2
)\Xi_1\sp\ast\bmupalpha\Xi_1
+(\abs{a_2}^2+\abs{b_2}^2
)
\Xi_2\sp\ast\bmupalpha\Xi_2
\lim\sb{l\to+\infty}e^{-2\kappa_2 l}.
\end{eqnarray}
For Part~\ref{lemma-fg-1},
when $\lambda\in i\R$
and
$1-\abs{\omega}<\abs{\lambda}<1+\abs{\omega}$,
one has $\kappa_2>0$,
hence the second term in the right-hand side of
\eqref{krein-current-conservation-2}
vanishes.
On the other hand,
\[
\Xi\sb 1\sp\ast\bmupalpha\Xi\sb 1
=
\frac{4i(\lambda-i(1-\omega))\xi_1}{c_1^2}>0
\qquad
\mbox{for}
\quad
\lambda\in i\R,\ \ \Im\lambda>1-\abs{\omega}.
\]
Then it follows from \eqref{krein-current-conservation-2}
that
$a_1=b_1=0$,
and we conclude that
$\varPhi$ is exponentially decaying for $x\to\pm\infty$,
so that $\lambda$ is an $L^2$ eigenvalue.
This finishes the proof of Part~\ref{lemma-fg-1}.

For Part~\ref{lemma-fg-2},
when $\lambda=\pm (1+\abs{\omega})i$,
one has $\kappa_2=0=\xi_2$,
and, using \eqref{def-xi2-eta2},
one computes
\[
\Xi\sb 2\sp\ast\bmupalpha\Xi\sb 2
=
\frac{4i(\lambda+i(1-\omega))\xi_2}{c_2^2}
=
0;
\]
therefore,
\eqref{krein-current-conservation-2}
again yields $a_1=b_1=0$.
\end{proof}

For $\lambda\in i\R$, $\abs{\lambda}>1+\abs{\omega}$,
the functions
$\f\sb 1(\lambda,\omega)$, $\f\sb 2(\lambda,\omega)$,
$\F\sb 1(\lambda,\omega)$, $\F\sb 2(\lambda,\omega)$
are linearly independent,
and there are
$A(\lambda,\omega)$,
$B(\lambda,\omega)\in\C^{4\times 4}$,
locally bounded in $\lambda$, $\omega$,
such that
\begin{equation}\label{g-is-f}
\g_j(x,\lambda,\omega)
=
\sum\sb{k=1}\sp{2}
\f_k(x,\lambda,\omega)
A_{kj}(\lambda,\omega)
+
\sum\sb{k=1}\sp{2}
\F_k(x,\lambda,\omega)
B_{kj}(\lambda,\omega),
\qquad
j=1,\,2.
\end{equation}
We note that, by \eqref{f-is-g},
applying $\bmupbeta$ to \eqref{g-is-f} and flipping $x$,
we also have
\begin{equation}\label{f-is-g-2}
\f_j(x,\lambda,\omega)
=
\sum\sb{k=1}\sp{2}
\g_k(x,\lambda,\omega)
A_{kj}(\lambda,\omega)
+
\sum\sb{k=1}\sp{2}
\G_k(x,\lambda,\omega)
B_{kj}(\lambda,\omega),
\qquad
j=1,\,2.
\end{equation}

\begin{lemma}\label{lemma-fg-infty}
For each $\omega\in\varOmega$,
the matrices $A(\lambda,\omega)$, $B(\lambda,\omega)$
from \eqref{g-is-f} satisfy
\begin{equation}\label{A-zero}
\lim\sb{\lambda\to \pm i\infty}\norm{A(\lambda,\omega)}
\sb{\End(\C^4)}=0,
\qquad
\lim\sb{\lambda\to \pm i\infty}B(\lambda,\omega)
=B_\infty(\omega),
\end{equation}
with
$\norm{B_\infty(\omega)}\sb{\End(\C^N)}<\infty$.
Moreover,
\begin{equation}\label{det-b-1}
\det B_\infty(\omega)=1.
\end{equation}
\end{lemma}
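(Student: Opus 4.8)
The plan is to identify $A(\lambda,\omega)$ and $B(\lambda,\omega)$ as, respectively, the mode-mixing (``reflection'') and mode-preserving (``transmission'') blocks of the $-\infty\to+\infty$ transition matrix of $\bJ\bL(\omega)$ in the modal basis $\{\Xi_j,\Eta_j\}$, and then to pass to the high-energy limit $\lambda\to\pm i\infty$ along $i\R$. First I would record the limits of the explicit data in \eqref{xi12.def}--\eqref{def-xi2-eta2}: as $\lambda\to\pm i\infty$ one has $\xi_1+\xi_2\to\infty$ while $\xi_1-\xi_2$ stays bounded, and the unit vectors $\Xi_j,\Eta_j$ together with the dual vectors $\theta_j,\eta_j$ converge --- in fact $\{\Xi_j^\infty,\Eta_j^\infty\}$ turns out to be an orthonormal basis, so that $\theta_j^\infty=\Xi_j^\infty$ and $\eta_j^\infty=\Eta_j^\infty$. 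Throughout I use that $\int_\R\norm{\bW(y,\omega)}_{\End(\C^4)}\,dy<\infty$ (Lemma~\ref{lemma10}) and that $\f_j,\F_j$ are uniformly bounded for $\abs\lambda\ge 3$ (estimate \eqref{f-f-c}).

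Writing a solution of $(\bJ\bL-\lambda)u=0$ in rotating modal coordinates, $u=\sum_j(\tilde a_j e^{i\xi_j x}\Xi_j+\tilde b_j e^{-i\xi_j x}\Eta_j)$, the equations for $(\tilde a,\tilde b)$ have coefficients carrying the phases $e^{\pm i(\xi_j-\xi_k)x}$ (mode-preserving) and $e^{\pm i(\xi_j+\xi_k)x}$ (mode-mixing). Since $\g_j=\bmupbeta\f_j(-x)$ satisfies $(\tilde a,\tilde b)(-\infty)=(0,e_j)$ and $(\tilde a,\tilde b)(+\infty)=(A_{\cdot j},B_{\cdot j})$ (read off from \eqref{g-is-f} and the Jost asymptotics), the matrix $A$ is assembled from mode-mixing couplings and $B$ from mode-preserving ones. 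As $\lambda\to\pm i\infty$ all phases $\xi_j+\xi_k$ blow up, so a Riemann--Lebesgue / averaging argument --- carried out inside the convergent Volterra iteration that produces the Jost solutions in Proposition~\ref{prop-jost}, using \eqref{f-f-c} and $\bW\in L^1$ for dominated convergence --- shows the mode-mixing contributions vanish in the limit; this yields $\lim_{\lambda\to\pm i\infty}\norm{A(\lambda,\omega)}_{\End(\C^4)}=0$. The surviving terms (the diagonal phases together with the bounded phase $\xi_1-\xi_2$) assemble into a limiting $2\times 2$ linear system $\p_x\tilde b=\mathcal N(x)\tilde b$ with $L^1$ coefficient matrix $\mathcal N$ (built from the limits $\Xi_j^\infty,\Eta_j^\infty,\bW$), whose $-\infty\to+\infty$ transition matrix is exactly $B_\infty(\omega)$; the bound $\norm{B_\infty(\omega)}<\infty$ then follows from $\int_\R\norm{\bW}<\infty$.

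For $\det B_\infty(\omega)=1$ I would use the Wronskian identity obtained by expanding $E(\lambda,\omega)=\det[\f_1,\f_2,\g_1,\g_2]$ by multilinearity and substituting \eqref{g-is-f}: the $\f_1,\f_2$ columns hidden inside $\g_1,\g_2$ create repeated columns and drop out, leaving
\[
E(\lambda,\omega)=\det B(\lambda,\omega)\cdot\det\big[\f_1,\f_2,\F_1,\F_2\big]
=\det B(\lambda,\omega)\cdot\det\big[\Xi_1,\Xi_2,\Eta_1,\Eta_2\big](\lambda,\omega),
\]
where the second equality uses that the Wronskian $\det[\f_1,\f_2,\F_1,\F_2]$ is $x$-independent (Lemma~\ref{lemma-trace-zero} and Liouville's formula) and is evaluated as $x\to+\infty$, the exponential factors cancelling. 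A short computation from \eqref{def-xi1-eta1}--\eqref{def-xi2-eta2} gives $\det[\Xi_1,\Xi_2,\Eta_1,\Eta_2]\to 1$ as $\lambda\to\pm i\infty$, whence $\det B_\infty(\omega)=\lim_{\lambda\to\pm i\infty}E(\lambda,\omega)$, and this limit equals $1$ by the large-$\abs\lambda$ asymptotics of the Evans function (Lemma~\ref{lemma-large-lambda}). Self-containedly, one can instead note that Liouville's formula for the limiting system gives $\det B_\infty(\omega)=\exp\big(\int_\R\tr\mathcal N(x)\,dx\big)$: writing $\bW=\mathrm{diag}(W_1,W_0)$ with $W_j=w_a^{(j)}\sigma_1+w_b^{(j)}\sigma_3+w_c^{(j)}I_2$ as in the proof of Lemma~\ref{lemma-paw}, the even part ($w_b^{(j)},w_c^{(j)}$) of $\bmupalpha\bJ\bW$ contributes $0$ to $\tr\mathcal N$ pointwise while the odd part ($w_a^{(j)}$) makes $\tr\mathcal N$ an odd function of $x$, so $\int_\R\tr\mathcal N=0$.

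The hard part is the high-energy analysis of the Jost solutions: one must carefully track which phases survive the limit --- the fact that $\xi_1-\xi_2$ does \emph{not} tend to $0$ is precisely what makes $B_\infty$ genuinely nontrivial rather than the identity --- and justify passing the limit through the Volterra series, which is where the uniform bound \eqref{f-f-c} and the $L^1$ bound on $\bW$ (Lemma~\ref{lemma10}) are used. The remaining ingredients are an explicit finite-dimensional linear-algebra computation (the limits of $\Xi_j,\Eta_j$ and of $\det[\Xi_1,\Xi_2,\Eta_1,\Eta_2]$) and the $x$-independence of the Wronskian $\det[\f_1,\f_2,\F_1,\F_2]$.
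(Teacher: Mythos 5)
Your argument for $\norm{A(\lambda,\omega)}\to 0$ is essentially the paper's: the paper phrases the mode-mixing cancellation as a stationary-phase estimate $\int_x^\infty\Xi_j\otimes\theta_k^\ast e^{i(x-y)\xi_j}\bW(y)\Eta_l e^{-iy\xi_l}\,dy=\mathcal{O}(\xi_j^{-1})$ inside the Volterra series, with the $L^1$ bound from Lemma~\ref{lemma10} and the uniform bound \eqref{f-f-c} justifying the passage to the limit, and the Wronskian identity $E=\det B\cdot\det[\Xi_1,\Xi_2,\Eta_1,\Eta_2]$ you derive is also exactly the one in the paper (it appears in the proof of Lemma~\ref{lemma-large-lambda}). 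Your observation that $\xi_1-\xi_2\to 2\omega\ne 0$ is correct and worth retaining; the paper's own proof asserts $\lim|x|\,|\xi_1-\xi_2|\to 0$, which is false for $\omega\ne 0$, and your remark that this is precisely what keeps $B_\infty$ from being trivial is a genuine improvement in precision.

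However, your primary route to $\det B_\infty=1$ is circular. You invoke Lemma~\ref{lemma-large-lambda} ($\lim|E(\lambda,\omega)|=1$), but in the paper Lemma~\ref{lemma-large-lambda} is proved \emph{from} Lemma~\ref{lemma-fg-infty}, using exactly the factorization $\lim E=\lim\det B\cdot\lim\det[\Xi_1,\Xi_2,\Eta_1,\Eta_2]$. So you would be assuming the conclusion. The paper's own (non-circular) argument instead establishes $B_\infty^2=I_2$ by comparing the large-$|\lambda|$ asymptotics of $(\f_1,\f_2)$ with $(\F_2,\F_1)$ and of $(\G_1,\G_2)$ with $(\g_2,\g_1)$ via $M=\mathrm{diag}(I_2,-I_2)$, then fixes the sign $\det B_\infty=+1$ by a homotopy $\bW\mapsto s\bW$.

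Your alternative Liouville argument is a genuinely different and non-circular route, and it reaches the right conclusion --- $\det B_\infty(\omega)=\exp\big(\int_\R\tr\mathcal N\big)$ and in fact $\tr\mathcal N\equiv 0$ pointwise --- but the stated mechanism is backwards. Decomposing $W_1=w_a\sigma_1+w_b\sigma_3+w_c I_2$ (with $w_a$ odd, $w_b,w_c$ even) and $W_0=w_b\sigma_3$, a direct computation with the limit modal vectors $\Eta_1^\infty=\tfrac12(i,-1,-1,-i)$, $\Eta_2^\infty=\tfrac12(i,1,1,-i)$ gives
\[
\mathcal N_{11}(x)=-\tfrac{i}{2}\,f'(v^2-u^2)\,(v^2+u^2),\qquad
\mathcal N_{22}(x)=+\tfrac{i}{2}\,f'(v^2-u^2)\,(v^2+u^2),
\]
so the \emph{even} part of the potential produces nonzero diagonal entries of $\mathcal N$ (each an even function of $x$), and these cancel across $j=1,2$; the \emph{odd} part $w_a\sigma_1$ contributes zero to each $\mathcal N_{jj}$ separately. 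Moreover, $\tr\mathcal N$ is an \emph{even} function of $x$, not odd: the symmetry $\bW(-x)=\bmupbeta\bW(x)\bmupbeta$ together with $\bmupalpha\bJ\bmupbeta=-\bmupbeta\bmupalpha\bJ$ and $\bmupbeta\Eta_j^\infty=\Xi_j^\infty$ gives $\mathcal N_{jj}(-x)=-\mathcal N^{a}_{jj}(x)$ (the $\tilde a$-block generator), and $\tr\mathcal N^a+\tr\mathcal N=\tr(\bmupalpha\bJ\bW)=0$, hence $\tr\mathcal N(-x)=\tr\mathcal N(x)$. So the cancellation must be argued by the $j$-sum, not by parity in $x$. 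If you rephrase the Liouville step along these lines it becomes a clean, self-contained alternative to the paper's $B_\infty^2=I_2$ plus homotopy argument.
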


\begin{proof}
The bound \eqref{f-f-c}
(which is also 
valid for $\g_j$, $\G_j$
in view of \eqref{f-is-g}),
together with \eqref{g-is-f}
and with the asymptotic behaviour of $\f$, $\F$
for $x\gg 1$
(Proposition~\ref{prop-jost})
and linear independence of $\Xi_j$, $\Eta_j$,
$1\le j\le 2$,
leads to
\[
\lim\sb{\lambda\to\pm i\infty}
(\norm{A(\lambda,\omega)}\sb{\End(\C^2)}
+\norm{B(\lambda,\omega)}\sb{\End(\C^2)})
<\infty.
\]
Following the proof of \eqref{f-f-c}
from Proposition~\ref{prop-jost}
and using the stationary phase method,
which yields
\[
\int_x^{\infty}
\Xi_j\otimes\theta_k\sp\ast e^{i(x-y)\xi_j}\bW(y)
\Eta_l e^{-iy\xi_l}\,dy=\mathcal{O}(\frac{1}{\xi_j})\to 0
\qquad
\mbox{as $\lambda\to\pm i\infty$},
\]
one shows that
$\norm{A(\lambda,\omega)}\sb{\End(\C^4)}\to 0$
as $\lambda\to\pm i\infty$.

Let us show that $\det B\sb\infty(\omega)=1$.
First, we note from
\eqref{def-xi1-eta1}, \eqref{def-xi2-eta2}
that
\[
\lim\sb{\lambda\to\pm i\infty}
\Xi_1(\lambda,\omega)=M
\lim\sb{\lambda\to\pm i\infty}\Eta_2(\lambda,\omega),
\qquad
\lim\sb{\lambda\to\pm i\infty}
\Xi_2(\lambda,\omega)=M
\lim\sb{\lambda\to\pm i\infty}\Eta_1(\lambda,\omega),
\]
where $M=\begin{bmatrix}I_2&0\\0&-I_2\end{bmatrix}$.
Therefore,
taking into account that for each $x\in\R$
one has $\lim\sb{\lambda\to \pm i\infty}
\abs{x}\abs{\xi_1-\xi_2}\to 0$,
we have
$\lim\sb{\lambda\to\pm i\infty}
\norm{
(\f_1,\f_2)-M (\F_2,\F_1)e^{2i\xi_1 x}}\sb{\C^4\times\C^4}
\to 0$,
for each fixed $x\gg 1$, and hence
(due to continuous dependence of solutions to
\eqref{def-M1} on the initial data)
for each fixed $x\in\R$:
\[
\lim\sb{\lambda\to\pm i\infty}
\norm{(\f_1(x,\lambda,\omega),\f_2(x,\lambda,\omega))
-M (\F_2(x,\lambda,\omega),\F_1(x,\lambda,\omega))
e^{2i\xi_1 x}}\sb{\C^4\times\C^4}
=0,
\qquad
x\in\R.
\]
Similarly,
comparing asymptotics for $x\ll -1$,
we conclude that
\[
\lim\sb{\lambda\to\pm i\infty}
\norm{(\G_1(x,\lambda,\omega),\G_2(x,\lambda,\omega))
-M (\g_2(x,\lambda,\omega),\g_1(x,\lambda,\omega))
e^{2i\xi_1 x}}\sb{\C^4\times\C^4}
=0,
\qquad
x\in\R.
\]
Therefore, besides \eqref{g-is-f},
which yields
$\lim\sb{\lambda\to \pm i\infty}
\norm{(\g_1,\g_2)-(\F_1,\F_2)B}_{L^\infty_x}=0$
(due to \eqref{A-zero}),
we also have
\[
\lim\sb{\lambda\to \pm i\lambda}
\norm{(\G_1(x,\lambda,\omega),\G_2(x,\lambda,\omega))
-(\f_1(x,\lambda,\omega),\f_2(x,\lambda,\omega))B(\lambda,\omega)}
\sb{\C^4\times \C^4}=0,
\qquad
x\in\R.
\]
On the other hand,
from \eqref{f-is-g-2},
taking into account \eqref{A-zero},
we also have
\begin{equation}
\lim\sb{\lambda\to\pm i\infty}
\norm{(\f_1(x,\lambda,\omega),\f_2(x,\lambda,\omega))
-(\G_1(x,\lambda,\omega),\G_2(x,\lambda,\omega))B(\lambda,\omega)}
\sb{\C^4\times\C^4}=0,
\qquad
x\in\R.
\end{equation}
It follows that
$\lim\sb{\lambda\to\pm i\infty}
B(\lambda,\omega)^2=I_2$, hence
$\lim\sb{\lambda\to\pm i\infty}
\det B(\lambda,\omega)=\pm 1$.
The relation
$\lim\sb{\lambda\to\pm i\infty}\det B(\lambda,\omega)=\pm 1$
can be obtained by substituting the
``interaction term'' $\bW$
with $s\bW$, $s\in[0,1]$,
and using the continuity argument when changing
$s$ from $0$ to $1$.
\end{proof}

\begin{lemma}\label{lemma-large-lambda}
For each $\omega\in\varOmega$,
$
\lim\sb{\lambda\to \pm i\infty} \abs{E(\lambda,\omega)}=1.
$
\end{lemma}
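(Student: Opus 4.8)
The plan is to compute the Evans function $E(\lambda,\omega)=\det[\f_1,\f_2,\g_1,\g_2]$ in the limit $\lambda\to\pm i\infty$ by expressing the $\g_j$'s in terms of the $\f_k$'s and $\F_k$'s via \eqref{g-is-f}, and then evaluating the resulting determinant with the help of Lemma~\ref{lemma-fg-infty}. Since $E(\lambda,\omega)$ is independent of $x$ (Definition~\ref{def-evans}), I am free to choose a convenient value of $x$; the natural choice is $x\gg 1$, where the Jost solutions $\f_j$, $\F_j$ are governed by their asymptotics $\f_j\sim\Xi_j e^{i\xi_j x}$, $\F_j\sim\Eta_j e^{-i\xi_j x}$ from Proposition~\ref{prop-jost}, up to corrections that are $\mathcal{O}(e^{-2k\updelta_\omega x})$ uniformly in $\lambda\in i\R$.

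First I would substitute \eqref{g-is-f} into \eqref{def-Z}. Writing the $4\times 4$ matrix of columns $[\f_1,\f_2,\g_1,\g_2]$ in block form, multilinearity of the determinant gives
\[
E(\lambda,\omega)=\det[\f_1,\f_2,\F_1,\F_2]\cdot\det B(\lambda,\omega)
+(\text{terms involving extra copies of }\f_1\text{ or }\f_2),
\]
but any term in the expansion that reuses a column $\f_1$ or $\f_2$ in the last two slots vanishes by antisymmetry, so in fact $E(\lambda,\omega)=\det[\f_1,\f_2,\F_1,\F_2]\,\det B(\lambda,\omega)$ exactly. By Lemma~\ref{lemma-fg-infty}, $\det B(\lambda,\omega)\to\det B_\infty(\omega)=1$ as $\lambda\to\pm i\infty$, so it remains to show $\det[\f_1,\f_2,\F_1,\F_2]\to 1$ (up to a unimodular phase that cancels).

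Next I would evaluate $\det[\f_1,\f_2,\F_1,\F_2]$ at a fixed $x\gg 1$. Up to errors $\mathcal{O}(e^{-2k\updelta_\omega x})$, this determinant equals
\[
\det\!\big[\Xi_1 e^{i\xi_1 x},\ \Xi_2 e^{i\xi_2 x},\ \Eta_1 e^{-i\xi_1 x},\ \Eta_2 e^{-i\xi_2 x}\big]
=\det[\Xi_1,\Xi_2,\Eta_1,\Eta_2]\cdot e^{i(\xi_1+\xi_2)x}e^{-i(\xi_1+\xi_2)x}
=\det[\Xi_1,\Xi_2,\Eta_1,\Eta_2].
\]
Here I use that $\updelta_\omega>0$ and that the exponential prefactors cancel because $\Xi_j$ and $\Eta_j$ carry opposite signs of $\xi_j$. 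Since the $\Xi_j$, $\Eta_j$ are the normalized eigenvectors of $\mathcal{M}_0(\lambda,\omega)$ defined in \eqref{def-xi1-eta1}--\eqref{def-xi2-eta2} (with $\abs{\Xi_j}=\abs{\Eta_j}=1$ and $\Eta_j=\bmupbeta\Xi_j$), a direct computation from the explicit entries—or, more cleanly, a continuity/homotopy argument in which the potential $\bW$ is scaled by $s\in[0,1]$ as in the last step of the proof of Lemma~\ref{lemma-fg-infty}, combined with the $\lambda\to\pm i\infty$ normalizations $\Xi_1\to M\Eta_2$, $\Xi_2\to M\Eta_1$ with $M=\mathrm{diag}(I_2,-I_2)$ already recorded there—shows $\abs{\det[\Xi_1,\Xi_2,\Eta_1,\Eta_2]}\to 1$. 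Since the whole argument only controls $\abs{E(\lambda,\omega)}$ in the limit (the individual Jost solutions are only defined up to the asymptotic normalization, and the square roots $\xi_j$ introduce phases), I take absolute values throughout and conclude $\lim_{\lambda\to\pm i\infty}\abs{E(\lambda,\omega)}=1$.

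The main obstacle is bookkeeping of the phases and branch choices: $\xi_1,\xi_2$ from \eqref{xi12.def} behave like $\xi_j\sim\mp i\abs{\lambda}$ for large $\abs{\lambda}$ on the imaginary axis, so $e^{i\xi_j x}$ is exponentially large or small depending on sign conventions, and one must verify that in the product $[\f_1,\f_2,\F_1,\F_2]$ these growths/decays cancel columnwise rather than merely in the determinant — this is why I evaluate at finite $x$ and exploit that $\Eta_j=\bmupbeta\Xi_j$ pairs the $e^{i\xi_j x}$ column with the $e^{-i\xi_j x}$ column. The uniformity of the $\mathcal{O}(e^{-2k\updelta_\omega x})$ remainder in $\lambda\in i\R$, already established in Proposition~\ref{prop-jost}, is what lets me send $\lambda\to\pm i\infty$ first at fixed $x$ and then not worry about the remainder at all.
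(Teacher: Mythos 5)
Your proposal is essentially the paper's argument: factor $E(\lambda,\omega)=\det B(\lambda,\omega)\,\det[\f_1,\f_2,\F_1,\F_2]$ via \eqref{g-is-f} and antisymmetry, then use Lemma~\ref{lemma-fg-infty} for $\det B\to 1$ and the asymptotics from Proposition~\ref{prop-jost} (with the $e^{\pm i\xi_j x}$ factors cancelling columnwise) to reduce the remaining determinant to $\det[\Xi_1,\Xi_2,\Eta_1,\Eta_2]$, which tends to $1$ by the explicit formulas \eqref{def-xi1-eta1}--\eqref{def-xi2-eta2} (the paper's \eqref{det-is-1}).

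One small correction to your bookkeeping paragraph: for $\lambda\in i\R$ with $\abs{\lambda}$ large, $\xi_1$ and $\xi_2$ from \eqref{xi12.def} are \emph{real} (indeed $\xi_j\approx\abs{\lambda}$ with the paper's branch choice), so the factors $e^{\pm i\xi_j x}$ are unimodular, not exponentially growing or decaying. There is therefore no ``columnwise blowup'' to worry about; the cancellation of phases in the determinant is exact and automatic, which is why the paper can simply write $\det[\f_1,\f_2,\F_1,\F_2]=\lim_{x\to+\infty}\det[\cdots]=\det[\Xi_1,\Xi_2,\Eta_1,\Eta_2]$ for each fixed $\lambda$ (the determinant being $x$-independent by Liouville). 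Also note that the homotopy-in-$s\bW$ argument in the proof of Lemma~\ref{lemma-fg-infty} is used there only to pin down the sign of $\det B_\infty$; the evaluation of $\det[\Xi_1,\Xi_2,\Eta_1,\Eta_2]$ is a direct computation from the explicit eigenvectors, as you also allow for.
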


\begin{proof}
Using \eqref{def-xi1-eta1} and \eqref{def-xi2-eta2},
we compute:
\begin{equation}\label{det-is-1}
\det[\Xi_1,\Xi_2,\Eta_1,\Eta_2]
=1+\mathcal{O}(\abs{\lambda}^{-1}),
\qquad
\lambda\to\pm i\infty.
\end{equation}
On the other hand, by \eqref{g-is-f},
\[
E(\lambda,\omega)
=
\det[\f_1,\f_2,\g_1,\g_2]
=
\det
[\f_1,\f_2,\sum\sb{j=1}\sp{2}\F_j B_{j1},
\sum\sb{j=1}\sp{2}\F_j B_{j2}]
=
\det B(\lambda,\omega)\,\det[\f_1,\f_2,\F_1,\F_2]
\]
\[
=
\det B(\lambda,\omega)
\lim_{x\to+\infty}\det[\f_1,\f_2,\F_1,\F_2]
=
\det B(\lambda,\omega)\,\det[\Xi_1,\Xi_2,\Eta_1,\Eta_2],
\]
where we used the asymptotics of $\f_j$, $\F_j$
from Proposition~\ref{prop-jost}.
Therefore,
by Lemma~\ref{lemma-fg-infty} and \eqref{det-is-1},
\[
\lim\sb{\lambda\to\pm i\infty}E(\lambda,\omega)
=
\lim\sb{\lambda\to\pm i\infty}
\det B(\lambda,\omega)
\lim\sb{\lambda\to\pm i\infty}
\det[\Xi_1,\Xi_2,\Eta_1,\Eta_2]=1.
\]
This finishes the proof.
\end{proof}

\begin{remark}
For $\omega\in\varOmega$,
$\lambda\in i\R$
with $\abs{\lambda}>1+\abs{\omega}$,
the Jost solutions
$\f_j,\,\F_j$, $j=1,\,2$
(and similarly $\g_j,\,\G_j$, $j=1,\,2$)
are linearly independent
(since so are the vectors
$\Xi_j$, $\Eta_j$, $j=1,\,2$
from \eqref{def-xi1-eta1},
\eqref{def-xi2-eta2});
hence there is a
``scattering matrix''
$S(\lambda,\omega)\in\C^{4\times 4}$
such that
\[
\big(
\g_1(x,\lambda,\omega),\g_2(x,\lambda,\omega),
\G_1(x,\lambda,\omega),\G_2(x,\lambda,\omega)
\big)
=
\big(
\f_1(x,\lambda,\omega),\f_2(x,\lambda,\omega),
\F_1(x,\lambda,\omega),\F_2(x,\lambda,\omega)
\big)
S(\lambda,\omega).
\]
Taking into account the relations
\eqref{f-is-g}
between
$\f_j$ and $\g_j$
and between $\F_j$ and $\G_j$,
we conclude that one also has
\[
(\f_1,\f_2,\F_1,\F_2)
=(\g_1,\g_2,\G_1,\G_2)S,
\]
hence $S^2=I$, $\det S=\pm 1$.
Taking into account that
$S\to\begin{bmatrix}0&I_2\\I_2&0\end{bmatrix}$
in the limit of zero interaction
(when $\bW(x,\omega)$
in \eqref{paw} is substituted by zero),
we conclude that $\det S=1$.
\end{remark}

\subsection{Explicit construction of the resolvent
of the linearization operator $\bJ\bL$}
\label{sect-resolvent}

In this section,
we will not restrict $\bJ\bL$ onto $\bX$
and give a general construction of the resolvent
in the case when $E(\lambda,\omega)\ne 0$.

\begin{remark}
Although
for applications to asymptotic stability
we will only need the resolvent
of $\bJ\bL(\omega)$
for $\lambda$ in the essential spectrum,
we will make our construction
for all $\lambda\in i\R$.
\end{remark}

\begin{definition}
For $1\leq p\leq \infty$ and $s\in\R$,
we will use the weighted $L^p$ spaces
with polynomial weights:
\[
\|f\|_{L^p_s}:=\|\langle\cdot\rangle^s f\|_{L^p}.
\]
For $f(x,t)$, we will denote
\[
\|f\|_{(L^p_s)_x}:=\|f(\cdot,t)\|_{L^p_s}
=\|\langle\cdot\rangle^s f(\cdot,t)\|_{L^p}.
\]
\end{definition}


\begin{proposition}
\label{prop9}
Fix $\omega\in\varOmega$.
Assume that $\lambda\in i\R$,
$\abs{\lambda}\ge 1-\abs{\omega}$,
is such that $E(\lambda,\omega)\ne 0$.

\begin{itemize}
\item
There are resolvents
$G\sp\pm(x,y,\lambda,\omega)$ of the operator
$\bJ\bL
=-\bmupalpha(\p\sb x-\mathcal{M}(x,\lambda,\omega))$
which satisfy
\begin{equation}\label{d-g-delta}
-\bmupalpha(\p\sb x-\mathcal{M}(x,\lambda\pm 0,\omega))
G\sp\pm(x,y,\lambda\pm 0,\omega)
=\delta(x-y)I_4,
\end{equation}
and for some $C(\lambda,\omega)<\infty$
(locally bounded in $\lambda$ and $\omega$)
one has
\begin{equation}\label{g-small}
\abs{
G\sp\pm(x,y,\lambda,\omega)}
\le C(\lambda\pm 0,\omega)
\min(\langle x\rangle,\langle y\rangle)\langle y\rangle,
\qquad
(x,\,y)\in\R^2.
\end{equation}
\item
For each $\omega\in\varOmega$,
there is $C(\omega)<\infty$ such that
\begin{equation}\label{G-bound-away}
\mathop{\lim\sup}_{\Lambda\to\pm\infty}
\norm{G\sp\pm(x,y,i\Lambda\pm 0,\omega)}\sb{\End(\C^N)}
\le
C(\omega),
\qquad
(x,\,y)\in\R^2.
\end{equation}
\item
For every $s>3$ and $K>0$,
there is a constant $C_{s,K,\omega}<\infty$
such that for all $\lambda\in i\R$ with $|\lambda|<K$ one has
\begin{equation}
\label{t:102-0}
\sup_{\lambda\in i\R\pm 0,\,|\lambda|<K} \|(\bJ\bL(\omega)-\lambda)^{-1}
\|_{L^2_s\to L^2_{-s}}
\leq C_{s,K,\omega}.
\end{equation}
\item
There is a constant $C_\omega<\infty$ such that
\begin{equation}
\label{a230-0}
\limsup_{
\lambda\in i\R\pm 0,\,|\lambda|\to \infty}
\|(\bJ\bL(\omega)-\lambda)^{-1}
\|_{L^2\sb{1}\to L^2\sb{-1}}
\leq C\sb\omega.
\end{equation}
\end{itemize}
\end{proposition}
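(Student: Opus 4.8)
The plan is to construct the kernels $G^\pm$ explicitly out of the Jost solutions of Proposition~\ref{prop-jost} and then to read off all four assertions from the asymptotics recorded there together with Lemma~\ref{lemma-large-lambda}. First I would use that, by \eqref{paw}--\eqref{def-M} and $\bmupalpha^2=I_4$, one has $\bJ\bL-\lambda=-\bmupalpha(\p_x-\mathcal{M}(x,\lambda,\omega))$, so $G^\pm(\cdot,y)$ must solve the homogeneous first-order system off $x=y$ and jump by $-\bmupalpha$ across $x=y$. On the range $\abs{\lambda}\ge 1-\abs{\omega}$ exactly two of the exponents $\pm i\xi_1,\pm i\xi_2$ yield solutions recessive as $x\to+\infty$ and two recessive as $x\to-\infty$, and the superscript $\pm$ records which boundary value is taken for the genuinely oscillatory modes (one such mode when $1-\abs{\omega}<\abs{\lambda}<1+\abs{\omega}$, two when $\abs{\lambda}>1+\abs{\omega}$): for $G^-$ the recessive solutions are, in the limit from $\Re\lambda<0$, those among $\f_1,\f_2$ (at $+\infty$) and $\g_1,\g_2$ (at $-\infty$), while $G^+$ replaces each oscillatory $\f_j$ by its companion $\F_j$ (and $\g_j$ by $\G_j$). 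Writing $U$ for the $4\times 4$ fundamental matrix built from the two recessive pairs, the kernel is the classical
\[
G^\pm(x,y)=
\begin{cases}
-\,U(x)\,\mathrm{diag}(1,1,0,0)\,U(y)^{-1}\bmupalpha, & x>y,\\
\phantom{-}\,U(x)\,\mathrm{diag}(0,0,1,1)\,U(y)^{-1}\bmupalpha, & x<y;
\end{cases}
\]
this solves the homogeneous system, has the prescribed jump, and is recessive at $\pm\infty$, which gives \eqref{d-g-delta}. By Lemma~\ref{lemma-trace-zero} and Liouville's formula $\det U$ is $x$-independent and equals, up to a nonzero normalization factor, the boundary value $E(\lambda\pm 0,\omega)$ of the Evans function; hence $U(y)^{-1}$ exists precisely when $E(\lambda,\omega)\ne 0$, the hypothesis. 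That $G^\pm$ is the kernel of $(\bJ\bL-\lambda)^{-1}$ on $i\R\pm 0$ (the limiting absorption principle) then follows from the continuous dependence of the Jost solutions on $\lambda$ built into Proposition~\ref{prop-jost}.

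Next I would prove the pointwise bound \eqref{g-small}. Expanding $U(y)^{-1}=(\det U(y))^{-1}\,\mathrm{adj}\,U(y)$, for $x>y$ the kernel is $1/E(\lambda,\omega)$ times a sum of terms (a recessive-at-$+\infty$ solution evaluated at $x$) $\otimes$ (a $3$-fold product of entries of $U(y)$, which always comprises both recessive-at-$-\infty$ solutions and one recessive-at-$+\infty$ solution); the case $x<y$ is symmetric. By Proposition~\ref{prop-jost} each Jost solution is bounded by $c(\omega)$ on the half-line where it is recessive and by $c(\omega)(\langle x\rangle+e^{\kappa_2\abs{x}})$ on the other. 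The exponentially growing contributions that arise (a recessive-at-$-\infty$ solution evaluated at large positive $y$, and so on) are exactly matched---thanks to $\kappa_1=0$ throughout $\abs{\lambda}\ge 1-\abs{\omega}$ and to the combinatorics of the minors---by the exponential decay carried by a recessive-at-$+\infty$ factor, leaving only the polynomial $\langle\cdot\rangle$ factors produced by the Jordan block of $\mathcal{M}_0$ at the thresholds (cf. Lemma~\ref{lemma-m-zero}); bookkeeping of which variable lies on its recessive side then yields precisely $\min(\langle x\rangle,\langle y\rangle)\langle y\rangle$, with a constant $C(\lambda\pm 0,\omega)$ locally bounded away from the zeros of $E(\cdot,\omega)$. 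The $\omega$- and $k$-dependence enters only through the harmless error terms $e^{-2k\updelta_\omega x}$ of Proposition~\ref{prop-jost}.

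The remaining three statements are then short. For \eqref{G-bound-away}: when $\abs{\lambda}\ge 3$ both $\xi_j$ are real, so by \eqref{f-f-c} all the Jost solutions (and the companions $\F_j,\G_j$) are bounded by $c(\omega)$ on all of $\R$, while $\abs{E(\lambda,\omega)}\to 1$ by Lemma~\ref{lemma-large-lambda}; inserting these into the explicit formula gives $\norm{G^\pm(x,y,i\Lambda\pm 0,\omega)}_{\End(\C^4)}\le C(\omega)$ uniformly in $(x,y)\in\R^2$ for large $\abs{\Lambda}$. For \eqref{t:102-0}: by \eqref{g-small}, $\abs{G^\pm(x,y,\lambda)}\le C(\lambda,\omega)\langle x\rangle\langle y\rangle$ with $C(\cdot,\omega)$ bounded on the compact set $\{\abs{\lambda}\le K\}\cap i\R$ (using that $E(\cdot,\omega)$ does not vanish there---in the application, Assumption~\ref{ass-1}---together with the elementary resolvent bound on the part $\abs{\lambda}<1-\abs{\omega}$ of the resolvent set), so for $s>3$ the weighted kernel $\langle x\rangle^{-s}G^\pm\langle y\rangle^{-s}$ is dominated by $C\langle x\rangle^{1-s}\langle y\rangle^{1-s}$ and is therefore Hilbert--Schmidt on $L^2(\R)$ with norm $\le C_{s,K,\omega}$ uniformly in $\lambda$; combined with the continuity of $\lambda\mapsto G^\pm$ up to $i\R\pm 0$ this yields the uniform bound. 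For \eqref{a230-0}: by \eqref{G-bound-away} the weighted kernel $\langle x\rangle^{-1}G^\pm\langle y\rangle^{-1}$ is, for $\abs{\lambda}$ large, dominated by $C(\omega)\langle x\rangle^{-1}\langle y\rangle^{-1}$, which is Hilbert--Schmidt on $L^2(\R)$ with norm $\le C(\omega)\norm{\langle\cdot\rangle^{-1}}_{L^2}^2$, whence the $\limsup$ bound.

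I expect the main obstacle to be the pointwise estimate \eqref{g-small}: extracting the precise weight $\min(\langle x\rangle,\langle y\rangle)\langle y\rangle$ requires tracking, uniformly in $\lambda$ along the essential spectrum and up to the thresholds $\abs{\lambda}=1\pm\abs{\omega}$, the exact cancellation between the exponentially growing minors of $U(y)$ and the decaying columns of $U(x)$, including the linear growth introduced by the Jordan block of $\mathcal{M}_0$ at the first threshold. Once \eqref{g-small} and \eqref{G-bound-away} are available, the spectral consequences \eqref{t:102-0} and \eqref{a230-0} are routine Hilbert--Schmidt estimates.
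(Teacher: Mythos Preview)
Your outline is correct and, once completed, would prove the proposition; it is also a genuinely different organization from the paper's argument, so a brief comparison is in order.

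You write the kernel in the classical fundamental-matrix form $G=U\,\mathrm{diag}\,U^{-1}\bmupalpha$ with $U=[\f_1,\f_2,\g_1,\g_2]$ and then read \eqref{g-small} off the adjugate. The paper instead writes the same Green function as
\[
G(x,y)=-\bmupalpha\sum_{j,k}\big[\Theta(x-y)\f_j(x)\Gamma_{jk}\otimes\g_k^\ast(y)+\Theta(y-x)\g_j(x)\Gamma_{jk}\otimes\f_k^\ast(y)\big]\Delta(y)^{-1},
\]
with an auxiliary $2\times 2$ matrix $\Gamma$ \emph{chosen} so that, after substituting $\g_k=\f A_{\cdot k}+\tilde\F B_{\cdot k}$, the single dangerous cross term $\f_1(x)\otimes\tilde\F_2^\ast(y)$ (the only one where the decay rate of the $x$-factor fails to dominate the growth of the $y$-factor) is forced to vanish; the residual polynomial growth is then isolated in the factor $\Delta(y)^{-1}$, for which $\det\Delta=|E|^2$ gives $\|\Delta^{-1}\|\le C\langle y\rangle$. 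In your route the same cancellation must (and does) occur inside the $3\times 3$ minors of $U$: the leading $e^{\kappa_2 y}$ parts of $\g_1,\g_2$ are both along $\tilde\F_2$ and so annihilate in any minor containing both columns, and what survives is compensated either by the $\f_2$ column in the minor or by the $\f_2(x)$ factor it pairs with. Your phrase ``combinatorics of the minors'' is exactly this, but note that the mechanism is slightly different in the two pairings: for the $\f_1$-row the minor itself is bounded (you use the decay of $\f_2(y)$), whereas for the $\f_2$-row the minor still grows like $e^{\kappa_2 y}$ and is killed by $\f_2(x)$ with $x>y$; your one-line summary blurs these two cases. You should also treat the region $y<x<0$ (and its mirror) separately, re-expanding $\f_j$ in the $\g,\tilde\G$ basis via \eqref{A-B}; the needed identity there is $\sum_j B_{2j}a_j^\ast(y)=$ (fourth row of $W(y)^{-1}$)$\cdot(-\bmupalpha)$ with $W=[\g_1,\g_2,\tilde\G_1,\tilde\G_2]$, which decays like $e^{-\kappa_2|y|}$. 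The paper's $\Gamma$-trick makes all of these cancellations explicit in one stroke; your adjugate route is more standard but demands the same bookkeeping spread over several sub-cases. For \eqref{G-bound-away}, \eqref{t:102-0}, \eqref{a230-0} your Hilbert--Schmidt argument coincides with the paper's.
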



\begin{proof}
We will only provide a construction
of $G\sp{-}$; see Remark~\ref{remark-g-pm} below.

Recall that
$\f_1$, $\f_2$ are Jost solutions decaying
(or oscillating)
for $x\to +\infty$,
while $\F_1$, $\F_2$ are the growing ones
(or oscillating ones).
$\f_1$, $\F_1$ have $\kappa_1$ as the rate
of decay and growth, respectively;
$\f_2$ and $\F_2$ have the rate $\kappa_2$,
with $\kappa_2>\kappa_1\ge 0$
(Cf. \eqref{def-kappa}).
Similarly with $\g_1$, $\g_2$, $\G_1$, $\G_2$, for $x\to-\infty$.

Recall that
if $\xi\sb j=0$, then
$\F\sb j=\f\sb j$,
hence the set $\{\f\sb 1,\,\f\sb 2,\,\F\sb 1,\,\F\sb 2\}$
is no longer linearly independent.
To overcome this issue,
let us modify $\F\sb j$.
For $\xi\sb j\ne 0$, denote
\begin{equation}\label{def-f-tilde}
\tilde\F\sb j(x,\lambda,\omega)
=
\F\sb j(x,\lambda,\omega)
+\frac{\f\sb j(x,\lambda,\omega)-\F\sb j(x,\lambda,\omega)}{2i\xi\sb j},
\qquad
j=1,\,2;
\end{equation}
\begin{equation}\label{def-g-tilde}
\tilde\G\sb j(x,\lambda,\omega)
=
\G\sb j(x,\lambda,\omega)
+\frac{\g\sb j(x,\lambda,\omega)-\G\sb j(x,\lambda,\omega)}{2i\xi\sb j}.
\qquad
j=1,\,2.
\end{equation}
Note that by \eqref{f-is-g}
one has
$\tilde G(x,\lambda,\omega)
=\bmupbeta\tilde F(-x,\lambda,\omega)$.

For $\lambda\in i\R$ such that
$\xi\sb j(\lambda,\omega)=0$,
we define $\tilde\F\sb j(x,\lambda,\omega)$
by the pointwise limit:
\[
\tilde\F\sb j(x,\lambda,\omega)
=
\F\sb j(x,\lambda,\omega)
+
\lim\sb{\lambda'\to\lambda;\,\xi_j(\lambda)>0}
\Big\{
\frac{\f\sb j(x,\lambda',\omega)
-\F\sb j(x,\lambda',\omega)}{2i\xi\sb j(\lambda')}
\Big\},
\]
and similarly for $\tilde\G\sb j$;
then
one has
$\tilde\F\sb j(x,\lambda,\omega)\sim \Xi\sb j\langle x\rangle$
for $x\gg 1$
and
$\tilde\G\sb j(x,\lambda,\omega)\sim \Eta\sb j\langle x\rangle$
for $x\ll -1$.

By Proposition~\ref{prop-jost},
we have the following asymptotics
for $\tilde\F_j$, $\tilde\G_j$:

\begin{lemma}\label{lemma-f-tilde}
For each
$\omega\in\varOmega$,
$\lambda\in i\R$,
one has:
\[
\abs{\tilde\F\sb j(x,\lambda,\omega)}
\le C(\omega)\langle x\rangle e^{\kappa\sb j x},
\qquad
x\ge 0,
\qquad
j=1,\,2,
\]
\[
\abs{\tilde\G\sb j(x,\lambda,\omega)}
\le C(\omega)\langle x\rangle e^{\kappa\sb j \abs{x}},
\qquad
x\le 0,
\qquad
j=1,\,2,
\]
where $C(\omega)$ is locally bounded
in $\omega$.
\end{lemma}

\begin{remark}
In Lemma~\ref{lemma-f-tilde},
the estimates remain true
when $\lambda$ is above the corresponding threshold,
so that $\xi\sb j>0$
while $\kappa\sb j=0$
(Cf. definition \eqref{def-kappa}).
\end{remark}

\begin{proof}
This follows from
Proposition~\ref{prop-jost}
and definitions \eqref{def-f-tilde}, \eqref{def-g-tilde}.
\end{proof}

Abusing the notations (Cf. \eqref{g-is-f}),
we assume that
$A(\lambda,\omega),\,B(\lambda,\omega)\in\C^{4\times 4}$
are such that
\[
\g\sb k(x,\lambda,\omega)
=
\sum\sb{j=1}\sp{2}
\f\sb j(x,\lambda,\omega) A\sb{jk}(\lambda,\omega)
+
\sum\sb{j=1}\sp{2}
\tilde\F\sb j(x,\lambda,\omega) B\sb{jk}(\lambda,\omega),
\qquad
k=1,\,2,
\]
which we write as
\begin{equation}\label{A-B-2}
(\g\sb 1,\g\sb 2)
=
(\f\sb 1,\f\sb 2)A
+(\tilde\F\sb 1,\tilde\F\sb 2)B.
\end{equation}
Multiplying \eqref{A-B-2} by $\bmupbeta$,
flipping the sign of $x$,
and using \eqref{f-is-g},
we arrive at
\begin{equation}\label{A-B}
(\f_1,\f_2)=(\g_1,\g_2)A+(\tilde\G_1,\tilde\G_2)B,
\end{equation}
with the same $A$, $B$ as in \eqref{A-B-2}.

\begin{lemma}\label{lemma-b-nondegenerate}
If $E(\lambda,\omega)\ne 0$,
then the matrix
$B(\lambda,\omega)$ is non-degenerate.
\end{lemma}

\begin{proof}
By  \eqref{A-B-2},
$
E(\lambda,\omega)
=\det[\f_1,\f_2,\g_1,\g_2]
=\det[\f_1,\f_2,(\f_1,\f_2)A+(\tilde\F_1,\tilde\F_2)B]
=\det[\f_1,\f_2,\tilde\F_1,\tilde\F_2]\det B.
$
\end{proof}

If $\lambda$ is neither an eigenvalue
nor a resonance,
so that the Jost solutions
\begin{equation}\label{ffgg}
\big\{\f_1(x,\lambda,\omega),\,\f_2(x,\lambda,\omega),
\,\g_1(x,\lambda,\omega),\,\g_2(x,\lambda,\omega)
\big\}
\end{equation}
are linearly independent,
we define:
\begin{equation}\label{green}
G(x,y,\lambda,\omega)
=
-\bmupalpha
\sum\sb{j,k=1}\sp{2}
\Big[
\Theta(x-y)
\f_j(x)\Gamma\sb{jk}(\lambda,\omega)\otimes \g\sb k\sp\ast(y)
+
\Theta(y-x)
\g_j(x)\Gamma\sb{jk}(\lambda,\omega)\otimes \f\sb k\sp\ast(y)
\Big]\Delta(y,\lambda,\omega)^{-1},
\end{equation}
where
$\Theta$ is the Heaviside step-function,
the Jost solutions also depend on $(\lambda,\omega)$
(this is not explicitly indicated),
the matrix
$\Gamma(\lambda,\omega)$
is defined by
\begin{equation}\label{def-gamma}
\Gamma(\lambda,\omega)=
\frac{1}{\sqrt{\abs{B_{21}}^2+\abs{B_{22}}^2}}
\begin{bmatrix}\abs{B_{22}}&\abs{B_{21}}e^{-is}
\\
\abs{B_{21}}e^{is}&-\abs{B_{22}}\end{bmatrix},
\end{equation}
so that $\det\Gamma=-1$;
here $s\in\R$ is chosen so that
\begin{equation}\label{gamma-good}
\sum\sb{j=1}\sp 2 B\sb{2j}\Gamma\sb{j1}
=B_{21}\Gamma\sb{11}+B_{22}\Gamma\sb{21}
=\frac{B_{21}\abs{B\sb{22}}+B_{22}\abs{B\sb{21}}e^{is}}
{\sqrt{\abs{B_{21}}^2+\abs{B_{22}}^2}}=0.
\end{equation}
(This choice of $\Gamma$
is justified later by the need to
have appropriate estimates on $G(x,y,\lambda,\omega)$.)
The matrix $\Delta(y,\lambda,\omega)$ in \eqref{green}
is defined by
\begin{equation}\label{def-delta}
\Delta(y,\lambda,\omega)=
\f_j(y,\lambda,\omega)\Gamma\sb{jk}(\lambda,\omega)
\otimes\g\sb k\sp\ast(y,\lambda,\omega)
-
\g_j(y,\lambda,\omega)
\Gamma\sb{jk}(\lambda,\omega)
\otimes\f\sb k\sp\ast(y,\lambda,\omega).
\end{equation}
Since $\det\Gamma\ne 0$,
the matrix \eqref{def-delta} is invertible
as long as
$\{\f\sb 1,\,\f\sb 2,\,\g\sb 1,\,\g\sb 2\}$
are linearly independent.
Moreover,
\begin{equation}\label{det-m}
\det \Delta(y,\lambda,\omega)
=\abs{E(\lambda,\omega)}^2.
\end{equation}

The relation \eqref{det-m} follows from the following
identity:

\begin{lemma}\label{lemma-inverse}
For any $u_j,\,v_j\in\C^N$, $1\le j\le N$,
$A\in\C^{N\times N}$,
one has
\begin{equation}\label{eq-inverse}
\det
\Big(
\sum\sb{j,\,k=1}\sp{N}u_j A_{jk}\otimes v_k\sp\ast\Big)
=\det A \det[u_1,\dots,u_N]
\,\overline{\det[v_1,\dots,v_N]}.
\end{equation}
\end{lemma}

\begin{proof}
If $v_j$ are linearly dependent,
the rank of the matrix
in the left-hand side is smaller than $N$,
and both sides in \eqref{eq-inverse} vanish.
Otherwise, the proof follows
from computing the determinants
of both sides of the identity
\[
\big(\sum\sb{j,\,k=1}\sp{N}u_j A_{jk}\otimes v_k\sp\ast\big)
\big([v_1,\dots,v_N]\sp\ast\big)^{-1}
=
\left[
\sum_{j=1}\sp{N}u_j A_{j1},\,\dots,\,\sum_{j=1}\sp{N}u_jA_{jN}
\right].
\qedhere
\]
\end{proof}

Applying Lemma~\ref{lemma-inverse}
to \eqref{def-delta},
thus setting
$[u_1,\dots,u_4]=[v_1,\dots,v_4]=[\f_1,\f_2,\g_1,\g_2]$
and $A=\begin{bmatrix}0&\Gamma\\-\Gamma&0\end{bmatrix}$,
one derives:
\[
\det \Delta(y,\lambda,\omega)
=
\big(\det\Gamma(\lambda,\omega)\big)^2
\big|
\det[\f_1(y,\lambda,\omega),\f_2(y,\lambda,\omega),
\g_1(y,\lambda,\omega),\g_2(y,\lambda,\omega)]
\big|^2,
\]
arriving at \eqref{det-m}.


As follows from the definition,
one has
\[
-\bmupalpha
(\p\sb x-\mathcal{M}(x,\lambda,\omega))G(x,y,\lambda,\omega)
=\delta(x-y)I_4.
\]

\begin{remark}\label{remark-g-pm}
At this point,
we need to recall that the Green function
is not uniquely defined at the essential spectrum.
Since the expression \eqref{green}
has the asymptotics $\sim e^{i\xi x}$, $\xi\approx -i\lambda$
for $\lambda\in i\R$, $\Im\lambda\gg 1$
(Cf. \eqref{xi12.def} and our convention
that $\xi_1,\,\xi_2$ are positive for $\lambda\in i\R$,
$\Im\lambda\gg 1$),
we conclude that \eqref{green}
will remain bounded for $\lambda$ near $i\R$
with $\Re\lambda<0$;
thus,
\eqref{green} corresponds to the limit
$G\sp{-}(x,y,\lambda,\omega):=G(x,y,\lambda-0,\omega)$
of the Green function
to the left of the upper branch of the essential spectrum
(this is consistent with \eqref{omega-lambda-positive}).
To define the limit on the right of the essential
spectrum, one would need to interchange
in the above considerations
$\f_j\sim e^{i\xi_j x}$ and $\F_j\sim e^{-i\xi_j x}$,
as well as $\g_j$ and $\G_j$
(this is assuming that
$\Im\lambda$ is large enough
so that $\xi_j>0$, hence $\f_j$, $\F_j$
with particular $j$
oscillate as $x\to+\infty$).
\end{remark}

\medskip

Let us now find the bounds on $G(x,y,\lambda,\omega)$.
Our goal is to show that
\eqref{green} does not grow exponentially
when $x$ and or $y$ go to infinity.
For example, when $y\to +\infty$, the fastest
growing term is $\tilde \F_2(y)$.
We need to show that
when \eqref{green} is written solely in terms
of $\f_j$, $\tilde\F_k$,
then in the combinations
$\f_j(x)\otimes\tilde F_k\sp\ast(y)$
one always has $x\ge y$,
and moreover the coefficient at the term
$\f_1(x)\otimes\tilde\F_2\sp\ast(y)$
vanishes
(this is the only problematic term,
when the decay of $\f_j(x)$
with $x\ge y$, $x\gg 1$, $y\gg 11$,
does not compensate
for the growth of $\tilde\F_k(y)$).
We claim that the choice of $\Gamma$ in \eqref{def-gamma}
specifically guarantees this.

For $x\ge y$, we only need
to consider the first term
from \eqref{green}:
\begin{equation}\label{fg}
\sum\sb{j,k}
\f_j(x)\Gamma\sb{jk}\otimes \g\sb k\sp\ast(y)
,
\qquad
x\ge y.
\end{equation}
It is enough to consider the following
two (intersecting) cases:
(1) $x\ge y$, $y\le 0$
and
(2) $x\ge y$, $x\ge 0$.
(In the intersection,
one has $x\ge 0$, $y\le 0$,
hence \eqref{fg} is uniformly bounded.)

Let us consider the case $x\ge y$, $x\ge 0$.
By \eqref{A-B-2},
the factor at $\f_1(x)$
in \eqref{fg}
is given by
\begin{equation}\label{f1-f2-small}
\sum\sb{k}
\Gamma\sb{1k}\g\sb k\sp\ast(y)
=
\sum\sb{j,k}
\Big(
\f\sb j(y)A\sb{jk}\bar\Gamma\sb{1k}
+\tilde\F\sb j(y)B\sb{jk}\bar\Gamma\sb{1k}
\Big)\sp\ast
=
\sum\sb{j,k}
\big(\f\sb j(y)A\sb{jk}\bar\Gamma\sb{1k}\big)\sp\ast
+
\sum\sb{k}
\big(\tilde\F_1(y)B\sb{1k}\bar\Gamma\sb{1k}\big)\sp\ast;
\end{equation}
in the last equality, we took into account \eqref{def-gamma}
and \eqref{gamma-good}:
\[
\sum\sb{k}
B\sb{2k}\bar\Gamma\sb{1k}
=
B\sb{21}\bar\Gamma\sb{11}+B\sb{22}\bar\Gamma\sb{12}
=
B\sb{21}\Gamma\sb{11}+B\sb{22}\Gamma\sb{21}
=0.
\]
It follows that
when we rewrite \eqref{fg}
in terms of $\f$, $\tilde \F$ only,
then the only term which can become exponentially large
for $x\ge y$, $x\ge 0$,
namely $\f_1(x)\otimes\tilde\F_2(y)\sp\ast$,
drops out!
Hence, \eqref{fg} is bounded by
$C(\lambda,\omega)\langle y\rangle$
for $x\ge 0$, $x\ge y$.
The linear growth in $y$ may come
from
$\f\sb j(x)\otimes \tilde\F_j(y)\sp\ast$
when $0\ll y\le x$,
whenever $\lambda\in i\R$ is near $i(1\pm\abs{\omega})$,
so that $\xi_j\approx 0$.

Let $x\ge y$, $y\le 0$.
By \eqref{A-B},
the factor at $\g_1\sp\ast(y,\lambda,\omega)$
in \eqref{fg}
is given by
\begin{equation}
\sum\sb{j}
\f\sb j(x)\Gamma\sb{j1}
=
\sum\sb{j,k}
\big(
\g\sb k(x)A\sb{kj}+\tilde\G\sb k(x)B\sb{kj}
\big)
\Gamma\sb{j1}
=
\sum\sb{j,k}
\g\sb k(x)A\sb{kj}\Gamma\sb{j1}
+
\sum\sb{j}\tilde\G_1(x)B\sb{1j}\Gamma\sb{j1};
\end{equation}
in the last equality, we took into account
that the coefficient at
$\tilde\G_2(x)\otimes\g_1\sp\ast(y)$
is given by
$B_{21}\Gamma\sb{11}+B_{22}\Gamma\sb{21}=0$,
by \eqref{gamma-good}.
Thus,
when we rewrite \eqref{fg} in terms of $\g$ and $\tilde\G$,
the coefficient at the term
$\tilde\G_2(x)\otimes\g_1\sp\ast(y)$,
the only one
out of $\tilde\G_j(x)\otimes\g_k\sp\ast(y)$
which can be exponentially large for $x\ge y$, $y\to-\infty$,
drops out.
It follows that \eqref{fg} is bounded by
$C(\omega)\langle x\rangle$
for $y\le 0$, $x\ge y$.
The linear growth in $x$ may come
from
$\tilde\G\sb j(x)\otimes \g\sb j(y)$
for $y\le x\ll 0$
(when writing \eqref{fg}
as a linear combination of
$\g_j\otimes \g_k\sp\ast$, $\tilde\G_j\otimes\g_k\sp\ast$,
via the substitution \eqref{A-B}),
whenever $\lambda$ is near $i(1\pm\abs{\omega})$
so that the corresponding $\xi_j$ is near zero.
By \eqref{f-f-c}, as $\abs{\lambda}\to\infty$,
$\norm{\f\sb j(\cdot,\lambda,\omega)}_{L^\infty}$
and $\norm{\g\sb j(\cdot,\lambda,\omega)}\sb{L^\infty}$
are bounded by
$c(\omega)<\infty$.

We summarize the cases
$x\ge y$, $y\le 0$ and $x\ge y$, $x\ge 0$:
Thus, for some $c(\lambda,\omega)<\infty$,
\begin{equation}\label{fg-2}
\Norm{
\sum\sb{j,k}
\Gamma\sb{jk}\f_j(x)\otimes \g\sb k\sp\ast(y)
}\sb{\End(\C^4)}
\le c(\lambda,\omega)
\min(\langle x\rangle,\langle y\rangle),
\qquad
x\ge y.
\end{equation}

The case $x\le y$ follows from the above once
we notice that
$
\Delta(-y,\lambda,\omega)
=\bmupbeta\Delta(y,\lambda,\omega)\bmupbeta
$
and then
\[
G(-x,-y,\lambda,\omega)
=
-\bmupbeta G(x,y,\lambda,\omega)\bmupbeta;
\]
we arrive at the same bound
but now for $x\le y$:
\begin{equation}\label{fg-3}
\Norm{
\sum\sb{j,k}
\Gamma\sb{jk}\g_j(x)\otimes \f\sb k\sp\ast(y)
}\sb{\End(\C^4)}
\le c(\lambda,\omega)\min(\langle x\rangle,\langle y\rangle),
\qquad
x\le y.
\end{equation}

Let us study the contribution
of the matrix $\Delta(y,\lambda,\omega)$ defined
in \eqref{def-delta}.
By \eqref{fg-2} and \eqref{fg-3},
$\Delta(y,\lambda,\omega)$ satisfies
\begin{equation}\label{m-small}
\norm{\Delta(y,\lambda,\omega)}\sb{\End(\C^4)}
\le c(\lambda,\omega)\langle y\rangle,
\end{equation}
with the linear growth only for
$x\approx \pm i(1\pm\omega)$.

By \eqref{det-m} and \eqref{m-small},
there is $C(\lambda,\omega)<\infty$
such that
\begin{equation}\label{m-small-inv}
\norm{\Delta(y,\lambda,\omega)^{-1}}\sb{\End(\C^4)}
\le C(\lambda,\omega)\langle y\rangle.
\end{equation}
(Here, we need to argue that
the minors of $\Delta$ can not grow
faster than $\langle y\rangle$;
at most one of $\tilde\G_j(y)\otimes \g_j(y)\sp\ast$,
$j=1,\,2$ can grow linearly at a given value of $\lambda$,
hence, in the appropriate basis,
only one element of $\Delta$ grows linearly
while others are bounded uniformly in $y\in\R$.)
Combining \eqref{fg-2} and \eqref{fg-3}
with \eqref{m-small-inv},
we arrive at the bound \eqref{g-small}.

\medskip

Let us now study the behaviour of $G(x,y,\lambda,\omega)$
for $\lambda\in i\R$, $\abs{\lambda}\to\infty$.
By Proposition~\ref{prop-jost},
the Jost solutions $\f_j$, $\tilde\F_j$,
$\g_j$, $\tilde\G_j$
are bounded uniformly in $x$
as long as $\abs{\lambda}$ is sufficiently large.
By Lemma~\ref{lemma-large-lambda}
and
\eqref{det-m},
for $\lambda\in i\R$, $\abs{\lambda}\to\infty$,
one has
$\abs{\det \Delta(y,\lambda,\omega)}\to 1$,
while the components of $\Delta(y,\lambda,\omega)$
are uniformly bounded for $\lambda\to \pm i\infty$.
It follows that the components of the matrix
$G(x,y,\lambda,\omega)$
defined in \eqref{green}
are bounded uniformly in
$x$ and $y$ as long as $\abs{\lambda}$ is sufficiently large.

\medskip

Finally, the bounds
\eqref{t:102-0}
and \eqref{a230-0}
follow from the pointwise estimates
\eqref{g-small} and \eqref{G-bound-away}
for Green's function.
This concludes the proof of
Proposition~\ref{prop9}.
\end{proof}

\section{Dispersive estimates for the semigroup}\label{disp-section}
In this section, we develop set of dispersive estimates, which will be useful in the sequel for controlling the radiation portion of the perturbation.
\subsection{Weighted decay estimates}

\begin{proposition}
\label{prop12}
Let $\omega \in \varOmega$. Then there exists $C<\infty$
such that for all $t>0$, the following estimates hold:
\begin{eqnarray*}
&&\sup_x \langle x\rangle^{-3}
\norm{[e^{t\bJ\bL(\omega)}P_c(\omega) f](x)}_{L^2_t}
\leq C \norm{f}_{L^2_x},\\
&&\|\int_{-\infty}^\infty e^{t\bJ\bL(\omega)} P_c(\omega)F(t, \cdot)\,dt\|_{L^2_x}\leq C
\|F\|_{(L^1_3)_x  L^2_t}.
\end{eqnarray*}
\end{proposition}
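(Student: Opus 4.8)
The plan is to derive both inequalities from the limiting absorption principle for $\bJ\bL(\omega)$ established in Proposition~\ref{prop9}, using the Cauchy/Stone representation \eqref{e-p-f} of $e^{t\bJ\bL}P_c$ together with Plancherel's theorem in $t$; this is the Dirac analogue of the local smoothing estimates for linearized Schr\"odinger operators (cf.\ the work of Mizumachi \cite{MR2511047}). I would prove the first (homogeneous) estimate and then deduce the second from it by a $TT^\ast$ duality argument.

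By \eqref{e-p-f}, for each fixed $x$ the function $t\mapsto [e^{t\bJ\bL}P_c f](x)$ is, up to a constant, the inverse Fourier transform in $t$ of $\Lambda\mapsto\big[(R^+_{\bJ\bL}(i\Lambda)-R^-_{\bJ\bL}(i\Lambda))f\big](x)$, supported in $\abs\Lambda\ge 1-\abs\omega$; hence Plancherel in $t$ turns the first estimate into the pointwise weighted resolvent bound $\int_{\abs\Lambda\ge 1-\abs\omega}\big|\big[(R^+_{\bJ\bL}(i\Lambda)-R^-_{\bJ\bL}(i\Lambda))f\big](x)\big|^2\,d\Lambda\le C\langle x\rangle^{6}\norm{f}_{L^2_x}^2$, uniformly in $x\in\R$. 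Using the explicit resolvent \eqref{green} and the relations \eqref{f-is-g}, \eqref{g-is-f} among the four Jost solutions of Proposition~\ref{prop-jost}, one writes the kernel of $R^+_{\bJ\bL}(i\Lambda)-R^-_{\bJ\bL}(i\Lambda)$ as a finite sum of rank-one terms $\sum_{j,k}c_{jk}(\Lambda)\,\psi^+_j(x,\Lambda)\otimes\psi^-_k(y,\Lambda)^\ast$ in the oscillating Jost solutions, with scalar coefficients $c_{jk}$ built out of the Evans function and the connection matrices $A,B$; by the uniform-in-$\Lambda$ asymptotics $\abs{\f_j(y,\lambda)-\Xi_j e^{\pm i\xi_j y}}\le c(\omega)e^{-2k\updelta_\omega y}$ of Proposition~\ref{prop-jost} (equivalently, the Born expansion in the exponentially decaying potential $\bW$, Lemma~\ref{lemma10}), the pairing $\langle\psi^-_k(\cdot,\Lambda),f\rangle$ reduces to $\overline{\Xi_k}\,(\mathcal F f)(\xi_k(\Lambda))$ plus an exponentially localized correction that moreover decays as $\abs\Lambda\to\infty$.

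Changing variables from $\Lambda$ to the spatial frequency $\xi=\xi_k(\Lambda)$ on each spectral band introduces the Jacobian $d\Lambda=\langle\xi\rangle\,\abs\xi^{-1}\,d\xi$, after which Plancherel in $\xi$ gives the bound, split into three regimes. For $\abs\Lambda$ in a compact set away from the thresholds $1-\abs\omega$ and $1+\abs\omega$ one has $\abs{\psi^\pm_j},\abs{c_{jk}}\le c(\omega)$ and $\langle\xi\rangle/\abs\xi$ bounded, using $E(\lambda,\omega)\ne 0$ and the absence of eigenvalues and resonances on the essential spectrum (Assumption~\ref{ass-1}), which makes $\Delta$ uniformly invertible (since $\det\Delta=\abs{E(\lambda,\omega)}^2$ by \eqref{det-m}, cf.\ Lemma~\ref{lemma-b-nondegenerate}). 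For $\abs\Lambda\to\infty$ one invokes the uniform high-energy bounds \eqref{G-bound-away}, \eqref{a230-0} and $\abs{E(\lambda,\omega)}\to 1$ (Lemma~\ref{lemma-large-lambda})---this part is essentially the high-frequency local decay of the free Dirac evolution, cf.\ \cite{MR2857360}. Near a threshold, where the corresponding $\xi_k(\Lambda)\to 0$ and the Jacobian produces a $1/\abs\xi$ singularity, one uses that the channel opening at that threshold contributes to $R^+_{\bJ\bL}-R^-_{\bJ\bL}$ only at first order, $\abs{c_{jk}(\Lambda)}=\mathcal O(\xi_k(\Lambda))$, because there is no resonance at $\lambda=\pm i(1\mp\abs\omega)$ (Assumption~\ref{ass-1}); this vanishing cancels the singular factor. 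The residual polynomial loss is at most $\langle x\rangle^{6}$, coming---exactly as in the proof of \eqref{g-small}---from the at most linear growth in $x$ of the modified Jost solutions $\tilde\F_j,\tilde\G_j$ from \eqref{def-f-tilde}, \eqref{def-g-tilde} near the thresholds, together with the bound on $\Delta^{-1}$.

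For the second estimate, writing $Tf(x,t)=[e^{t\bJ\bL}P_c f](x)$, the first estimate says $T\colon L^2_x\to\langle x\rangle^{3}L^\infty_x L^2_t$, whose predual is $(L^1_3)_x L^2_t$; moreover $\int_\R e^{t\bJ\bL}P_c F(t,\cdot)\,dt$ is governed, after conjugation by $\bJ$ (which sends $\bJ\bL$ to $\bL\bJ=-(\bJ\bL)^\ast$, so that the adjoint semigroup equals $\bJ^{-1}e^{-t\bJ\bL}\bJ$), by the local smoothing estimate for the adjoint semigroup, to which the preceding steps apply verbatim, and this gives the required $TT^\ast$ bound. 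The hard part throughout is the threshold analysis at $\lambda=\pm i(1\mp\abs\omega)$: one must show that the vanishing Jacobian $\abs\xi$ is precisely compensated, either by the first-order vanishing of $R^+_{\bJ\bL}-R^-_{\bJ\bL}$ there (where the no-resonance hypothesis of Assumption~\ref{ass-1} is essential) or by the polynomial weight, while simultaneously matching this with the large-$\abs\Lambda$ regime where only the free-Dirac decay and the uniform bounds \eqref{G-bound-away}, \eqref{a230-0} are available---the exponent $\langle x\rangle^{-3}$ being wasteful but comfortably sufficient for the bookkeeping to close.
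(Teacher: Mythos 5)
Your proposal follows the same high-level route as the paper's proof: the two inequalities are observed to be dual, the first is reduced via Plancherel in $t$ and the representation \eqref{e-p-f} to a weighted $L^2_\Lambda$ bound on the boundary resolvent, and the latter is analysed through the explicit Green function \eqref{green} built from Jost solutions. The paper, however, organises the argument by a low/high-frequency cut $\chi(i\bJ\bL)$: the high-frequency part is handled through the Born expansion \eqref{a600} and the explicit kernel of $(D_m-\mu)^{-1}$ (estimates \eqref{a70}--\eqref{a80}); the low-frequency part is Lemma~\ref{lemmaa-y10}, which works directly with the kernel \eqref{green} and treats the thresholds by two complementary mechanisms that differ from yours: below a channel's threshold, where $\f_k(y,i\Lambda,\omega)\sim e^{-\kappa_k y}$ decays, the pairing with $u\in L^2$ is bounded by $\|u\|_{L^2}/\sqrt{\kappa_k}$, and $\kappa_k^{-1/2}$ is $L^2$-integrable in $\Lambda$ near the threshold; above the threshold, the oscillation of $\f_k$ is exploited by integration by parts with $L_\Lambda=\frac{1}{i(y-z)}\p_\Lambda$ followed by the Schur test, and the polynomial weight $\langle x\rangle^{-3}$ absorbs the factors $\langle x\rangle$ from $\tilde\F_j$.

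Two points in your sketch deserve attention. First, the Jacobian is stated backwards: since $\xi_k^2=(\Lambda\pm\omega)^2-1$, one has $d\Lambda=\frac{|\xi|}{\langle\xi\rangle}\,d\xi$, not $\frac{\langle\xi\rangle}{|\xi|}\,d\xi$, so the change of variables produces a \emph{helpful} vanishing factor, not a $1/|\xi|$ singularity; the genuine singularity you have in mind comes from the $1/\xi$ in the free resolvent kernel (equivalently, the Wronskian vanishing for the free operator), which is precisely what Lemma~\ref{lemma133} isolates and what the no-resonance hypothesis removes. Second, your key threshold claim $|c_{jk}(\Lambda)|=\mathcal{O}(\xi_k(\Lambda))$ for the jump $R^+_{\bJ\bL}-R^-_{\bJ\bL}$ is plausible (it amounts to analyticity of the resolvent in $\xi_k$ across the threshold under the no-resonance condition), but it is not what the paper proves and would require its own argument from the Jost/Evans function framework; the paper instead obtains the weaker but sufficient square-integrability in $\Lambda$ stated above, and in fact establishes \eqref{a60}, \eqref{a65} for $R^+$ and $R^-$ \emph{separately}, not only for their difference. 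Because the sign error and the unverified first-order vanishing offset one another, the conclusion you draw is correct, but as written the threshold step does not constitute a proof and should be replaced either by the paper's $\kappa_k^{-1/2}$ + stationary-phase argument or by an actual proof of the $\mathcal{O}(\xi_k)$ vanishing.
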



\begin{remark}
The estimates in Proposition~\ref{prop12} can be upgraded to include derivatives.
For example,
$$
\sup_x \langle x\rangle^{-3} \norm{\p_x [e^{t\bJ\bL}P_c(\omega) f](x)}_{L^2_t}
\leq C \norm{f}_{H^1_x}.
$$
Note that the last estimate presents a challenge, since $\p_x e^{t\bJ\bL} \neq
e^{t\bJ\bL} \p_x $. Nevertheless,
since
\[
\bL(\omega)=\bD_m-\omega I_4+\bW(x,\omega),
\]
with $\bD_m$ from \eqref{def-DD},
we may essentially commute the derivative with $e^{t\bJ\bL}$ modulo low order error terms, whence the result generalizes to include derivatives.
\end{remark}

\begin{proof}[Proof of Proposition~\ref{prop12}]
Clearly, the two estimates in the claim of Proposition~\ref{prop12} are dual to each other, so it suffices to establish the first one.

Pick an even function
$\chi\in C^\infty\sb{\mathrm{comp}}(\R)$ such that
\begin{equation}\label{def-chi}
\supp\chi\subset [-4,4],
\qquad
\chi(\Lambda)=1
\quad\mbox{for $|\Lambda|\le 3$}.
\end{equation}
Decompose the evolution into two pieces:
$$
e^{t\bJ\bL}P_c(\omega) f=\chi(i\bJ\bL)
 e^{t\bJ\bL}P_c(\omega) f+(1-\chi(i\bJ\bL)) e^{t\bJ\bL}P_c(\omega) f.
$$
The required estimate will follow from
\begin{eqnarray}
\label{a50}
 \sup_x \|(1-\chi(i\bJ\bL))e^{t\bJ\bL}P_c(\omega) f\|_{L^2_t} &\leq&C \|f\|_{L^2_x}, \\
\label{a55}
 \sup_x \langle x\rangle^{-3} \|\chi(i\bJ\bL) e^{t\bJ\bL}P_c(\omega) f\|_{L^2_t} &\leq&C\|f\|_{L^2_x}.
\end{eqnarray}
By \eqref{e-p-f},
for a fixed value of $x$, the Fourier transform in $t$ of the function
$$
g_{x,t}=(1-\chi(i\bJ\bL))e^{t\bJ\bL}P_c(\omega) f
$$
is exactly
$$
g_x(\Lambda)
=-(1-\chi(\Lambda))
\big(
[R^+_{\bJ\bL}(i\Lambda)-R^-_{\bJ\bL}(i\Lambda)]f
\big)(x).
$$
Thus, \eqref{a50} will follow from
\begin{equation}
\label{a60}
\sup_x \|(1-\chi(\Lambda)) R^\pm_{\bJ\bL}(i\Lambda)f(x)\|_{L^2_\Lambda}\leq C \|f\|_{L^2_x}.
\end{equation}
Similarly, \eqref{a55} will follow from
\begin{equation}
\label{a65}
\sup_x \langle x\rangle^{-3}
\| \chi(\Lambda) R^\pm_{\bJ\bL}(i\Lambda)f(x)\|_{L^2_\Lambda}\leq C \|f\|_{L^2_x}.
\end{equation}

We now prove \eqref{a60} and \eqref{a65}.

\bigskip

\noindent
{\bf Proof of \eqref{a60}.}
For brevity, we denote
 $$
 R_\bW(\Lambda):= (\bD_m-\omega I_4-\Lambda\bJ^{-1}+\bW)^{-1}.
 $$
 From the resolvent identity, we have
 $R_\bW=R_0-R_\bW \bW R_0=R_0-R_0 \bW R_\bW$,
 whence the following Born expansion holds:
\begin{equation}
\label{a600}
 R_\bW=R_0-R_0 \bW R_0 +R_0 \bW R_\bW \bW R_0.
\end{equation}
Observe that
$
R_0=\begin{bmatrix}
(D_m-(\omega+\Lambda) I_2)^{-1}&0 \\
0&(D_m-(\omega-\Lambda) I_2)^{-1}
\end{bmatrix}.
$
The restrictions imposed by the cut-off $(1-\chi)$
(Cf. \eqref{def-chi})
implies that $|\om\pm \Lambda|>3$.
It follows that it is enough to show that
\begin{eqnarray}
\label{a70}
&&\sup_x \int_3^\infty |(D_m-\mu I_2)^{-1} f(x)|^2 d\mu\leq
C \|f\|^2_{L^2_x}; \\
\label{a75}
&&\sup_x \int_3^\infty |(D_m-\mu I_2)^{-1} W_\nu (D_m-\mu I_2)^{-1} f(x)|^2 d\mu\leq C \|\bW\|_{L^1_x}^2 \|f\|^2_{L^2_x}; \\
\label{a80}
&&\sup_x \int_3^\infty |(D_m-\mu I_2)^{-1} W_\nu R_\bW W_\nu (D_m-\mu I_2)^{-1} f(x)|^2 d\mu\leq
C \|\langle x\rangle^{\alpha} \bW\|_{L^2_x}^2 \|f\|^2_{L^2_x}.
\end{eqnarray}
Above,
$\alpha>3/2$ and $W_\nu$ is either of the potentials $W_1, W_0$.
Similar estimates were shown in \cite[Section VIII]{MR2985264}, but we provide the details here for completeness. Note that
$$
(D_m-\mu I_2)^{-1}=(1-\p_x^2-\mu^2)^{-1} \left(\begin{array}{cc}
1+\mu&\p_x \\
-\p_x&\mu-1
\end{array}
\right).
$$
Thus, setting $\mu=\sqrt{k^2+1}$, the operator $(D_m-\mu I_2)^{-1}$ is represented as a linear combination of operators with the following kernels:
$$
e^{\pm i k |x|} \sgn(x),
\qquad
\frac{e^{\pm i k |x|}}{k},
\qquad
\frac{e^{\pm i k |x|}\sqrt{k^2+1}}{k}.
$$
Clearly, for the purposes of showing \eqref{a70}, \eqref{a75},
\eqref{a80}, it is enough to consider the operator with kernel $e^{\pm i k |x|} \sgn(x)$.

For the proof of \eqref{a70}, we have by Plancherel's
\begin{eqnarray*}
&&\sup_{x\in\R} \int_{\sqrt{8}}^\infty \left|\int e^{\pm i k |x-y|} \sgn(x-y)
f(y) \,dy
\right|^2 \frac{k\,dk}{\sqrt{k^2+1}}
\\
&& \leq2
\sup\sb{x\in\R}
\int_{\sqrt{8}}^\infty
\left\{
\Big|\int_{-\infty}^x e^{\mp i k y} f(y) \,dy\Big|^2
+\Big|\int_{x}^\infty e^{\pm i k y} f(y) \,dy\Big|^2
\right\}
\,dk
\leq 4 \|f\|_{L^2}^2 .
\end{eqnarray*}
Similarly, for \eqref{a75} we have (by Minkowski's)
\begin{eqnarray*}
&&\sup_x \int_{\sqrt{8}}^\infty \left|\int e^{\pm i k |x-y|} \sgn(x-y)
\bW(y) [R_0(\sqrt{1+k^2}) f] (y) \,dy
\right|^2 \frac{k\,dk}{\sqrt{k^2+1}}\\
&&\leq \int_{\sqrt{8}}^\infty \left|
\int |\bW(y)|
\Big|[R_0(\sqrt{1+k^2}) f] (y)\Big|\,dy
\right|^2 \,dk \\
&&\leq \left(\int |\bW(y)|
\left( \int_{\sqrt{8}}^\infty |[R_0(\sqrt{1+k^2}) f] (y)|^2 \,dk\right)^{1/2} \,dy\right)^2\\
&&\leq \|\bW\|_{L^1}^2 \sup_y \int_{\sqrt{8}}^\infty
\Big|[R_0(\sqrt{1+k^2}) f] (y)\Big|^2 \,dk
\leq C \|\bW\|_{L^1}^2 \|f\|_{L^2}^2.
\end{eqnarray*}
This shows \eqref{a75}. Finally, for \eqref{a80}, we estimate
\begin{eqnarray*}
&&\sup_x \int_{\sqrt{8}}^\infty \left|\int e^{\pm i k |x-y|} \sgn(x-y)
\bW(y) R_\bW [\bW R_0 f](y) \,dy\right|^2 \,dk
\\
&&\leq 
\|\langle y\rangle^{3} \bW(y)\|_{L^2}^2
\int_{\sqrt{8}}^\infty
\|\langle y\rangle^{-3} R_\bW \langle y\rangle^{-3} [ \langle y\rangle^{3} \bW(y) [R_0(\sqrt{1+k^2}) f] (y)\|_{L^2_y}^2 \,dk \\
&&
\leq\|\langle y\rangle^{3} \bW(y)\|_{L^2}^4 \|R_\bW\|_{(L^2_3)_x\to(L^2_{-3})_x}^2
\sup_y \int_{\sqrt{8}}^\infty
\left|R_0(\sqrt{1+k^2}) f] (y)\right|^2 \,dk \\
&&
\leq C \|\langle y\rangle^{3} \bW(y)\|_{L^2}^4
\|f\|_{L^2_x}^2.
\end{eqnarray*}
In the last estimate, we have used the estimates
from Proposition~\ref{prop9}
which are uniform for large $\Lambda$
(for large values of the spectral parameter $\Lambda>\sqrt{8}$),
$R_\bW:L^2_3(\R)\to L^2_{-3}(\R)$.

\bigskip

\noindent
{\bf Proof of \eqref{a65}.} \\
The statement for low frequencies follows from the following result:

\begin{lemma}\label{lemmaa-y10}
Define
$A:\,C\sb 0(\R)\to C(\R\times\R)$ by
\begin{equation}\label{def-operator-a}
u\mapsto
A u(x,\Lambda)=\chi(\Lambda)\int\sb\R G(x,y,i\Lambda,\omega)u(y)\,dy.
\end{equation}
Then $A$ extends to a continuous operator
$L^2(\R)\to L^\infty\sb{\mathrm{loc}}(\R,L^2_\Lambda(\R))$,
and moreover there is $C<\infty$ such that
\begin{equation}\label{cube}
\sup_x
\langle x\rangle^{-3}
\norm{A u(x,\cdot)}\sb{L^2_\Lambda}
\le C\norm{u}\sb{L^2}.
\end{equation}
\end{lemma}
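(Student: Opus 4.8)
The estimate \eqref{cube} is the substance; the continuity of $A\colon L^2(\R)\to L^\infty_{\mathrm{loc}}(\R,L^2_\Lambda(\R))$ then follows from \eqref{cube} together with the local-in-$x$ bound \eqref{g-small}. Since $\chi$ is supported in $[-4,4]$ and, in the Cauchy formula \eqref{e-p-f} for which this operator is built, $G(x,y,i\Lambda,\omega)$ enters only for $i\Lambda$ in the essential spectrum, it suffices to take the $L^2_\Lambda$-norm over the compact set $K:=\{\Lambda:\ 1-\abs{\omega}\le\abs{\Lambda}\le 4\}$; on $K$ one has $E(i\Lambda,\omega)\ne 0$ by Assumption~\ref{ass-1}\,(iv), so all the constants furnished by Propositions~\ref{prop-jost} and~\ref{prop9} are uniform over $K$. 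By the reflection identity $G(-x,-y,\lambda,\omega)=-\bmupbeta G(x,y,\lambda,\omega)\bmupbeta$ used in the proof of Proposition~\ref{prop9} and the $\Theta(x-y)/\Theta(y-x)$ splitting of \eqref{green}, we may fix $x\ge 0$ and estimate $\langle x\rangle^{-3}$ times the $L^2_\Lambda(K)$-norm of $\int_{\R}[\,\cdot\,]u(y)\,dy$ for each of the two halves of \eqref{green} separately.

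Write $K=K_{\mathrm{reg}}\cup K_{\mathrm{thr}}$, where $K_{\mathrm{thr}}$ is the union of $\delta$-neighbourhoods ($\delta>0$ small but fixed) of the four threshold values $\abs{\Lambda}=1\mp\abs{\omega}$ in $K$, and $K_{\mathrm{reg}}$ is its complement. By Proposition~\ref{prop-jost}, on $K$ each Jost solution equals a uniformly bounded $\End(\C^4)$-valued amplitude times $e^{\pm i\xi_j(i\Lambda,\omega)x}$, up to an exponentially $x$-localized remainder, where $\xi_1(i\Lambda,\omega)=\sqrt{(\omega+\Lambda)^2-1}$, $\xi_2(i\Lambda,\omega)=\sqrt{(\omega-\Lambda)^2-1}$; on $K_{\mathrm{reg}}$ each channel is either oscillatory ($\xi_j$ real, bounded away from $0$) or exponentially decaying ($\xi_j=i\kappa_j$, $\kappa_j$ bounded below). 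Substituting this into \eqref{green}, expanding $\g_k(y)$ or $\f_k(y)$ via \eqref{A-B-2}--\eqref{A-B}, and using the choice \eqref{def-gamma}--\eqref{gamma-good} of $\Gamma$ to kill the would-be uncompensated products $\f_1(x)\otimes\tilde\F_2(y)^\ast$ and $\tilde\G_2(x)\otimes\g_1(y)^\ast$ exactly as in the proof of Proposition~\ref{prop9}, one sees that on $K_{\mathrm{reg}}$ the $y$-integrand is, in each region of the $(x,y)$-plane, a finite sum of terms $\mathbf a(x,\Lambda)\,e^{i\psi_0(x,\Lambda)}\,e^{\pm i\xi_j(i\Lambda,\omega)y}\,b(y,\Lambda)$ with $\mathbf a,b$ uniformly bounded, plus terms in which $e^{\pm i\xi_j y}$ is replaced by an $\End(\C^4)$-valued factor exponentially localized (in $x$ or in $y$) at a rate bounded below — here one pairs every exponentially growing Jost factor with a partner decaying at least as fast in the region at hand. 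For the oscillatory terms, pull $\sup_{K_{\mathrm{reg}}}\abs{\mathbf a}$ out of the $L^2_\Lambda$-norm and change variables $\Lambda\mapsto\xi_j(i\Lambda,\omega)$, which is smooth and strictly monotone on each component of $K_{\mathrm{reg}}$ with $\tfrac{d\Lambda}{d\xi_j}=\tfrac{\xi_j}{\abs{\omega\pm\Lambda}}$ bounded; Plancherel's theorem in $\xi_j$ then gives the bound $C\|u\|_{L^2}$ — no weight is needed here. The exponentially localized terms are dispatched by a pointwise-in-$\Lambda$ Cauchy--Schwarz estimate (using $y>x$ or $y<x$ to absorb the growth of the Jost factor against the decay of its partner), followed by integration over the bounded set $K_{\mathrm{reg}}$.

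On $K_{\mathrm{thr}}$ the relevant channel degenerates ($\xi_j\to 0$) and $G$ acquires, through $\tilde\F_j,\tilde\G_j$, a polynomial growth in $x,y$; by \eqref{g-small} (uniformly in $\Lambda\in K_{\mathrm{thr}}$, since $E\ne 0$ at the threshold) one has $\abs{G(x,y,i\Lambda,\omega)}\le C_\omega\min(\langle x\rangle,\langle y\rangle)\langle y\rangle$. Split the $y$-integral at $\abs y\sim\langle x\rangle$. For $\abs y\le\langle x\rangle$ use this bound directly: $\bigl|\int_{\abs y\le\langle x\rangle}G\,u\,dy\bigr|\le C_\omega\langle x\rangle^{2}(2\langle x\rangle)^{1/2}\|u\|_{L^2}\le C_\omega\langle x\rangle^{5/2}\|u\|_{L^2}$, so after the factor $\langle x\rangle^{-3}$ and the measure bound $\abs{K_{\mathrm{thr}}}\le C\delta$ this contributes $\le C\sqrt\delta\,\langle x\rangle^{-1/2}\|u\|_{L^2}$. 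For $\abs y\ge\langle x\rangle$ one reorganizes, as in the proof of Proposition~\ref{prop9}, so that every $\tilde\F_j(y)=\F_j(y)+\tfrac{\f_j(y)-\F_j(y)}{2i\xi_j}$ enters with its $\tfrac1{\xi_j}$-singular part cancelled, leaving again oscillatory terms $\mathbf a(x,\Lambda)e^{\pm i\xi_j(i\Lambda,\omega)y}b(y,\Lambda)$ with $\mathbf a,b$ bounded; taking $L^2_\Lambda(K_{\mathrm{thr}})$-norms and changing variables $\Lambda\mapsto\xi_j$, now with the extra gain $\tfrac{d\Lambda}{d\xi_j}=\tfrac{\xi_j}{\abs{\omega\pm\Lambda}}=O(\sqrt\delta)$ on $K_{\mathrm{thr}}$, Plancherel yields a contribution $\le C\delta^{1/4}\|u\|_{L^2}$. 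Summing the finitely many terms over the two pieces of $K$ and the two halves of \eqref{green} proves \eqref{cube}.

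The main obstacle is the ``$\abs y\ge\langle x\rangle$ on $K_{\mathrm{thr}}$'' step: one must verify that, after substituting \eqref{A-B-2}--\eqref{A-B} into \eqref{green} region by region, every occurrence of the singular factor $\tfrac1{\xi_j}$ carried by $\tilde\F_j,\tilde\G_j$ is cancelled — by the $\Gamma$-relations \eqref{gamma-good} together with the linear independence of $\{\f_1,\f_2,\g_1,\g_2\}$ encoded in $E(i\Lambda,\omega)\ne 0$ — so that the part of $G$ that does not decay in $y$ genuinely has a bounded amplitude, and is not, e.g., of the form $\tfrac{\sin(\xi_j y)}{\xi_j}\cdot(\text{bounded in }x)$, which would make the lemma false. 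Once this structural fact is in place, the two analytic inputs — Plancherel after $\Lambda\mapsto\xi_j$ with the Jacobian gain $O(\sqrt\delta)$ near thresholds, and the crude size bound absorbed by $\langle x\rangle^{-3}$ on the bounded part of the $y$-range — are routine; the bookkeeping is essentially a re-run of the case analysis in the proof of Proposition~\ref{prop9}, now retaining the oscillatory phases rather than only the size bounds.
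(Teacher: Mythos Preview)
Your approach differs substantially from the paper's, and the gap you yourself flag is real and unfilled.

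On $K_{\mathrm{reg}}$ your change-of-variables-plus-Plancherel idea is sound and parallels the paper's treatment of the high-frequency piece \eqref{a70}. The trouble is entirely in your $K_{\mathrm{thr}}$ analysis for $\abs{y}\ge\langle x\rangle$. You assert that after the reorganization ``every $\tilde\F_j(y)$ enters with its $\tfrac{1}{\xi_j}$-singular part cancelled,'' so that the large-$y$ part of $G$ has bounded amplitude in $y$. You then say this is the main obstacle and must be verified --- but you never verify it. This is not a bookkeeping detail: the only pointwise bound available is \eqref{g-small}, which allows $\langle y\rangle$ growth at threshold, and that growth enters through the factor $\Delta^{-1}(y,i\Lambda)$ in \eqref{green}, which you never mention. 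The $\Gamma$-relation \eqref{gamma-good} kills the single exponentially uncompensated cross-term $\f_1(x)\otimes\tilde\F_2^\ast(y)$; it does \emph{not} kill the diagonal contributions $\tilde\F_j(y)$ sitting inside $\Delta^{-1}(y)$, which are precisely what produce the $\langle y\rangle$ in \eqref{m-small-inv}. If a term of the form $\tfrac{\sin(\xi_j y)}{\xi_j}$ survives --- and nothing in your argument rules it out --- your Plancherel step collapses, as you yourself note.

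The paper avoids this issue entirely by a different mechanism. Rather than trying to show the amplitude is bounded in $y$, it squares the $L^2_\Lambda$-norm (a $TT^\ast$ move), introduces a second spatial variable $z$, and integrates by parts twice in $\Lambda$ using $L_\Lambda=\tfrac{1}{i(y-z)}\partial_\Lambda$ acting on the phase $e^{i\xi_k(\Lambda)(y-z)}$; see \eqref{square}. The point is that $\partial_\Lambda\xi_k=(\Lambda\pm\omega)/\xi_k$ is \emph{large} near threshold, so each integration by parts buys a factor of $\xi_k/(y-z)$; after two, the $(y,z)$-kernel becomes integrable and the Schur test closes. The cost is a factor of $\mu_j(x,\Lambda)\sim\langle x\rangle/\xi_j$ from derivatives landing on the $x$-dependent amplitude, and the remaining $\int\tfrac{d\Lambda}{\xi_k}$ is finite; altogether this produces exactly the $\langle x\rangle^{3}$ that the weight absorbs. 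No structural cancellation in $y$ is needed.

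In short: your $K_{\mathrm{reg}}$ argument is fine, but for $K_{\mathrm{thr}}$ you have replaced the paper's analytic mechanism (oscillation in $\Lambda$) by an unproved structural claim (boundedness of the amplitude). Either prove that claim --- which would require a sharper analysis of $\Delta^{-1}(y)$ near threshold than Proposition~\ref{prop9} provides --- or switch to the integration-by-parts-in-$\Lambda$ argument.
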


\begin{proof}
Let $u\in L^2(\R,\C^4)$.
Without loss of generality, we assume that
$\supp u\subset\R\sb{+}$,
so that in \eqref{green}
we have $y\ge 0$.

\medskip

\noindent
{\bf The case $x\ge 0$.}
We use the expression \eqref{green} for $G(x,y,i\Lambda,\omega)$;
expressing
in \eqref{green} the Jost solutions
$\g\sb j$ in terms of $\f\sb j$ and $\tilde\F\sb j$,
we see that it suffices to check that
the expressions
\begin{equation}\label{three}
\int_0^\infty
\Theta(\pm(x-y))\f\sb j(x)\f\sb k\sp\ast(y)u(y)\,dy,
\qquad
\int_0^\infty
\Theta(y-x)\tilde\F\sb j(x)\f\sb k\sp\ast(y)u(y)\,dy,
\qquad
\int_0^\infty
\Theta(x-y)\f\sb j(x)\tilde\F\sb k\sp\ast(y)u(y)\,dy,
\end{equation}
with $j,\,k=1,\,2$,
are bounded in $L^2$ as functions of $\Lambda$,
with an appropriate bound on the growth with $x$.
Above, we omitted the weight $\chi(\Lambda)$
present in \eqref{def-operator-a};
this weight will become important when we will integrate by parts.

In \eqref{three}
and in the rest of the proof,
the Jost solutions
are evaluated at $\lambda=i\Lambda$
and $\omega$,
which we usually do not indicate explicitly
to shorten the notations.
The first two terms in \eqref{three}
are analyzed similarly;
the more difficult being the second one,
so we focus on it.

\noindent
$\bullet$
Assume that $\f\sb k(y,i\Lambda,\omega)$ is exponentially decaying,
so that
\[
\f\sb k(y,i\Lambda,\omega)\sim e^{-\kappa\sb k y},
\qquad
y\gg 1,
\]
with $\kappa\sb k>0$
(Cf. \eqref{def-kappa}).

When
$\tilde\F\sb j(x,i\Lambda,\omega)$ remains bounded
or grows linearly in $x$
for $x\gg 1$,
\[
\Abs{
\int_0^\infty
\tilde\F\sb j(x)\f\sb k\sp\ast(y) u(y)\,dy
}
\le
C
\langle x\rangle
\int_0^\infty
\abs{\f\sb k(y)}
\abs{u(y)}\,dy
\le
C
\langle x\rangle
\norm{\Theta(\cdot)\f\sb k}\norm{u}
\le
\frac{C\langle x\rangle}{\sqrt{\kappa\sb k}}\norm{u}.
\]
Note that $\kappa\sb k^{-1/2}$ is $L^2$ in $\Lambda$
near the thresholds $\Lambda=\pm(1\pm\omega)$.

When $\tilde\F\sb j(x,i\Lambda,\omega)$
is exponentially growing,
by Lemma~\ref{lemma-f-tilde},
we have
$\abs{\tilde\F\sb j(x)}
\le C(\Lambda,\omega)\langle x\rangle e^{\kappa\sb j x}$
for $x\ge 0$,
and moreover we only need to consider
terms with
$\kappa_j\le \kappa\sb k$
due to our construction of $G$ in
Proposition~\ref{prop9}
(the term
$\tilde\F_2(x)\f_1\sp\ast(y)$ is absent
in the expansion of $G(x,y)$
over $\f_j(x)\f_k\sp\ast(y)$, $\tilde\F_j(x)\f_k\sp\ast(y)$,
and $\f_j(x)\tilde\F_k\sp\ast(y)$),
and with $C(\Lambda,\omega)$
locally bounded in $\Lambda$ and $\omega$,
with
$
\mathop{\lim\sup}_{\Lambda\to\pm\infty}
C(\Lambda,\omega)
\le C(\omega)<\infty$.
Then, again,
\[
\Abs{
\int_0^\infty \Theta(y-x)
\tilde\F\sb j(x)\f\sb k\sp\ast(y) u(y)\,dy
}
\le
C\langle x\rangle
\int_x^\infty
e^{\kappa_j x}e^{-\kappa_k y}
\abs{u(y)}\,dy
\le
\frac{C\langle x\rangle}{\sqrt{\kappa\sb k}}\norm{u}.
\]

\noindent
$\bullet$
Assume that $\f\sb k(y,i\Lambda,\omega)\sim e^{i\xi\sb k y}$
is oscillating:
\begin{equation}\label{is-oscillating}
\abs{\Lambda\pm\omega}>1,
\qquad
\xi\sb k(i\Lambda,\omega)=\sqrt{(\Lambda\pm\omega)^2-1}>0.
\end{equation}
(According to the construction of the Green function,
since $\f_k$ is oscillating,
we only need to consider the terms in \eqref{three}
with $\tilde\F\sb j(x)$ also oscillating:
$\xi_j>0$.)
In this case,
the integration in spatial variables
becomes possible after
integrating by parts with the aid of the operator
$L_\Lambda=\frac{1}{i(y-z)}\p\sb\Lambda$;
we only give a sketch,
substituting the Jost solutions
by their asymptotic behaviour
$\f_k(x)\sim e^{i\xi_k x}$
and
$\tilde\F_j(x)\sim e^{-i\xi_j x}
+\frac{e^{i\xi_j x}-e^{-i\xi_j x}}{2i\xi_j}$
(Cf. \eqref{def-f-tilde}).
Then the integration by parts yields
\begin{eqnarray}\label{square}
&&
\Abs{
\int\sb{\R}
\chi(\Lambda)\,d\Lambda
\int\sb{\R\times\R}
\abs{\tilde\F\sb j(x)}^2
\overline{\f\sb k\sp\ast(z)u(z)}
\f\sb k\sp\ast(y)u(y)
\,dy\,dz
}
\nonumber
\\
&&
=
\Abs{
\int\sb{\R}
\chi(\Lambda)\,d\Lambda
\int\sb{\R\times\R}
\abs{\tilde\F\sb j(x)}^2
L_\Lambda^2
\Big(
\overline{\f\sb k\sp\ast(z)u(z)}
\f\sb k\sp\ast(y)u(y)
\Big)
\,dy\,dz
}
\nonumber
\\
&&
\le
\langle x\rangle^2
\int\limits\sb{\R} \chi(\Lambda)\,d\Lambda
\int\limits\sb{\R\times\R}
\frac{C\abs{u(y)}\abs{u(z)}\,dy\,dz}
{1+\abs{\mu_j(x,\Lambda,\omega)^{-1}(y-z)\p\sb\Lambda\xi\sb k}^2}
\le
C\langle x\rangle^2
\int\limits\sb{\R} \chi(\Lambda)\,d\Lambda
\frac{\mu_j(x,\Lambda,\omega)\norm{u}^2}{\abs{\p\sb\Lambda\xi\sb k}}.
\end{eqnarray}
Above,
\begin{equation}\label{at-most}
\mu_j(x,\Lambda,\omega)
:=
C\max\Big(
1,
\ \abs{x}\abs{\p\sb\Lambda\xi_j},
\ \frac{\abs{\p\sb\Lambda^2\xi_j}}{\abs{\p\sb\Lambda\xi_j}}
\Big)
\end{equation}
is the bound on the contribution of $\p\sb\Lambda$
during the integration by parts
(the last term in \eqref{at-most}
is the contribution from the derivative
falling onto $\p\sb\Lambda\xi_j$ during the second
integration by parts).
In the last inequality in \eqref{square},
we used the Schur test.
Due to \eqref{is-oscillating},
one has
\[
\p\sb\Lambda\xi\sb j
=\frac{\Lambda\pm\omega}{\xi\sb j},
\qquad
\abs{\p\sb\Lambda^2\xi\sb j}
\le\frac{C\langle\Lambda\rangle^2}{\xi\sb j^3};
\]
hence, \eqref{at-most} can be continued as follows:
\[
\mu_j(x,\Lambda,\omega)
=C\max
\Big(
1,
\ \abs{x}\abs{\p\sb\Lambda\xi\sb j},
\ \frac{\abs{\p\sb\Lambda^2\xi\sb j}}{\abs{\p\sb\Lambda\xi\sb j}^2}
\Big)
\le
C\max\Big(1,
\ \frac{\langle x\rangle}{\xi_j}
\Big).
\]
It follows that
\[
\frac{\mu_j(x,\Lambda,\omega)}{\abs{\p\sb\Lambda\xi_k}}
\le
\frac{C\langle x\rangle}{\xi_k(i\Lambda,\omega)}
\]
is locally integrable in $\Lambda\in\supp\chi$
(and such that $\abs{\Lambda\pm\omega}>1$),
and moreover
$\langle x\rangle^{-3}
\int
\langle x\rangle^2
\frac{\mu_j(x,\Lambda,\omega)}{\abs{\p\sb\Lambda\xi\sb k}}
\chi(\Lambda)\,d\Lambda$
is bounded uniformly in $x$.
The factor $\langle x\rangle^2$
under the integral comes from the bound
$\abs{\tilde\F_j(x,\lambda,\omega)}\le C\langle x\rangle$
which remains valid uniformly in $\xi_j>0$
when $\xi_j\to 0+$ (Cf. Lemma~\ref{lemma-f-tilde}).
This leads to \eqref{cube}.

\medskip

Let us analyze the last term in \eqref{three}.
When $\tilde\F\sb k(y)$ is oscillating,
we use the same consideration as above,
in the case when $\f_k(y)$ was oscillating.
Let us consider the situation
when $\tilde\F\sb k(y)$ is exponentially growing
as $y\to+\infty$.
Since this growth is compensated by the decay of
$\Theta(x-y)\f\sb j(x)$
due to the choice of $B_{jk}(\lambda,\omega)$ in \eqref{def-gamma}
(as we mentioned above,
the construction of $G$ is such that
we only need to treat terms with
$\kappa\sb k\le\kappa\sb j$),
it suffices to consider the terms
$\Theta(x-y)\f\sb j(x)\tilde\F\sb k\sp\ast(y)$
which are bounded by
$
\Theta(x-y)\langle x\rangle e^{-\kappa_j\abs{x}}e^{\kappa_k\abs{y}},
$
with $\kappa\sb j\ge\kappa\sb k$.
We have:
\[
\langle x\rangle
\int
\Theta(x-y)e^{-\kappa_j\abs{x}}e^{\kappa_k\abs{y}}
\abs{u(y)}\,dy
\le
C\langle x\rangle
\int_0^x\abs{u(y)}\,dy
\le
C
\langle x\rangle^{3/2}
\norm{u},
\]
which immediately leads to \eqref{cube}.

\medskip

\noindent
{\bf The case $x\le 0$.}
This case is in fact much simpler.
In this case,
from \eqref{green},
we only need to consider the contribution from
$
\sum\sb{j,k=1}\sp{2}
\g_j(x)\Gamma\sb{jk}\f\sb k\sp\ast(y);
$
we need to prove that the expressions
\[
\int\sb{\R\sb{+}}
\g_j(x)\Gamma\sb{jk}\f\sb k\sp\ast(y)u(y)\,dy,
\]
with $j,\,k=1,\,2$,
are $L^2$-bounded
in $\Lambda$, for $\Lambda\in\supp\chi$.
Since $\g_j(x)$ are bounded for $x\le 0$,
the proof follows the lines of our
argument for the case $x\ge 0$,
except that we do not need to worry
whether the decay of $\f\sb k(x)$
compensates the growth $\g\sb j(x)$
since the latter terms are bounded for $x\le 0$.
This finishes the proof.
\end{proof}

This completes the proof of Proposition~\ref{prop12}.
\end{proof}

Next, we state and prove the estimate for the ``free'' Dirac operator, which is reminiscent of Proposition~\ref{prop12}.
{\it Surprisingly, however, Proposition~\ref{prop12} does not hold for $\bD_m$,
unless one adds a derivative correction
that takes care of the low frequency component of $f$.
Note also that there is no need of the exponential weight either,
but recall that this was added for the perturbed operator to counter exactly the same effect:
a somewhat pathological behavior of the low frequency component of the solution. }
\begin{lemma}
\label{lemma133}
We have the following estimate
for the evolution of the `` free'' Dirac operator:
\begin{eqnarray}
\label{a800}
&&\sup_x
\Big\|
e^{t\bJ\bL_0} f
\Big\|_{L^2_t}\leq \| M f\|_{L^2_x},
\\
\label{a801}
&&\|\int e^{t\bJ\bL_0} F(t, \cdot)\,dt\|\leq C \|M F\|_{L^1_x L^2_t},
\end{eqnarray}
where $M=\sqrt{\langle\nabla\rangle/|\nabla|}$ or more precisely $\widehat{M g}(\xi)= \frac{(1+\xi^2)^{1/4}}{|\xi|^{1/2}} \hat{g}(\xi)$.
In addition, by a simple duality argument, there is also
\begin{equation}
\label{a805}
\|\int e^{t\bJ\bL_0} F(t, \cdot)\,dt\|_{L^2_x}\leq C \| M F\|_{L^1_x L^2_t}.
\end{equation}
\end{lemma}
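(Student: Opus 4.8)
The plan is to prove the local–smoothing bound \eqref{a800} by a Fourier change of variables adapted to the Dirac dispersion relation, and then to deduce \eqref{a801} and \eqref{a805} from it by a $TT^{*}$ duality argument, using that $M$ commutes with $e^{t\bJ\bL_0}$.

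First I would note that the free operator $\bJ\bL_0=\bJ(\bD_m-\omega I_4)$ has constant coefficients and that $e^{t\bJ\bL_0}$ represents (in the sense of Section~\ref{sect-linearization}) the unitary Dirac group $e^{-it(D_m-\omega)}=e^{i\omega t}e^{-iD_m t}$; in particular $\bJ\bL_0$ is skew-adjoint on $L^2(\R,\R^4)$, so that $(e^{t\bJ\bL_0})^*=e^{-t\bJ\bL_0}$. Since $D_m^2=1-\p_x^2$, writing $\Pi_\pm$ for the (uniformly bounded, and commuting with $M$) spectral projections of $D_m$ onto its positive/negative spectrum, one has $e^{-iD_m t}=e^{-it\sqrt{1-\p_x^2}}\Pi_++e^{it\sqrt{1-\p_x^2}}\Pi_-$. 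As the scalar phase $e^{i\omega t}$ does not affect the $L^2_t$ norm and the $\Pi_\pm$ can be absorbed into $f$, \eqref{a800} reduces to the scalar half-Klein--Gordon estimate
\[
\sup_x\Big\|\big(e^{\pm it\sqrt{1-\p_x^2}}h\big)(x)\Big\|_{L^2_t}\le C\,\|Mh\|_{L^2_x}.
\]

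To prove this, fix $x$ and set $v(x,t)=(2\pi)^{-1/2}\int_\R e^{ix\xi}e^{-it\sqrt{\xi^2+1}}\hat h(\xi)\,d\xi$. Substituting $\mu=\sqrt{\xi^2+1}\in[1,\infty)$ separately on $\{\xi>0\}$ and on $\{\xi<0\}$ (so $d\xi=\mu(\mu^2-1)^{-1/2}\,d\mu$) exhibits $v(x,\cdot)$ as the inverse $t$-Fourier transform of $\mathbf 1_{[1,\infty)}(\mu)\big[e^{ix\sqrt{\mu^2-1}}\hat h(\sqrt{\mu^2-1})+e^{-ix\sqrt{\mu^2-1}}\hat h(-\sqrt{\mu^2-1})\big]\mu(\mu^2-1)^{-1/2}$; by Plancherel in $t$, the bound $|a+b|^2\le 2|a|^2+2|b|^2$, and undoing the substitution,
\[
\|v(x,\cdot)\|_{L^2_t}^2\le 2\int_\R|\hat h(\xi)|^2\,\frac{\sqrt{\xi^2+1}}{|\xi|}\,d\xi=2\,\|Mh\|_{L^2_x}^2,
\]
uniformly in $x$, since $\widehat{Mh}(\xi)=(1+\xi^2)^{1/4}|\xi|^{-1/2}\hat h(\xi)$. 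The weight $\sqrt{\xi^2+1}/|\xi|$ is exactly the Jacobian of the dispersion relation, singular only at $\xi=0$; this is why $M$ is needed and why it is purely a low-frequency correction, with no spatial weight. This proves \eqref{a800}.

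Finally, \eqref{a800} says that $f\mapsto e^{t\bJ\bL_0}M^{-1}f$ is bounded $L^2_x\to L^\infty_xL^2_t$ (the multiplier $M^{-1}$ being bounded on $L^2$). Taking the adjoint of this operator — using that $M^{-1}$ is self-adjoint, that $(e^{t\bJ\bL_0})^*=e^{-t\bJ\bL_0}$, and the inclusion $L^1_xL^2_t\hookrightarrow(L^\infty_xL^2_t)^*$ — gives $\big\|M^{-1}\!\int e^{-t\bJ\bL_0}F(\cdot,t)\,dt\big\|_{L^2_x}\le C\|F\|_{L^1_xL^2_t}$; replacing $F$ by $MF$ (and $t$ by $-t$) and commuting $M$ through the Fourier multiplier $e^{t\bJ\bL_0}$ yields \eqref{a805}, hence also \eqref{a801}. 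I do not expect a real obstacle here: the only delicate points are the branch bookkeeping in the substitution $\mu=\sqrt{\xi^2+1}$, the boundedness and commuting-with-$M$ of $\Pi_\pm$, and the identity $(e^{t\bJ\bL_0})^*=e^{-t\bJ\bL_0}$.
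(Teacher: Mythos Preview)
Your proof is correct and follows essentially the same route as the paper: reduce the free Dirac evolution to the scalar half-Klein--Gordon propagator, apply Plancherel in $t$ after the change of variables $\mu=\sqrt{1+\xi^2}$, and obtain the dual estimates by the adjoint argument. The only cosmetic differences are that the paper uses the signed substitution $\kappa=\sgn(\xi)\sqrt{1+\xi^2}$ (which is globally injective and therefore gives an equality rather than your factor $2$), and reaches the scalar problem by observing that the components of $h$ satisfy the Klein--Gordon equation, whereas you use the spectral projections $\Pi_\pm$ of $D_m$; your version is arguably cleaner on this point and on the handling of the $\omega$ shift.
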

\begin{proof}
Clearly, \eqref{a801} is just a dual to \eqref{a800}, so we concentrate on \eqref{a800}.
Due to the block-diagonal structure of $\bD_m$,
the problem $i u_t = \bD_m u$ reduces to the following linear system:

%
%
%

\[
i\p\sb t
h
=D_m h,
\qquad
h\at{t=0}=h^0,
\]
which in the components of $h\in\C^2$ takes the following form:
$$
\left\{
\begin{array}{l}
i \p_t h_1=h_1+\p_x h_2, \\
i \p_t h_2=-\p_x h_1-h_2,\\
h_1(0)= h_1^0,\qquad h_2(0)= h_2^0.
\end{array}
\right.
$$

It follows that $h_1, h_2$ both satisfy the Klein--Gordon equation
$\p_{tt} h_{1,2}-\p_{xx} h_{1,2} +h_{1,2}=0$ with the corresponding initial data. Thus, \eqref{a800} reduces to
$$
\sup_x \|e^{i t \langle\nabla\rangle} f\|_{L^2_t}\leq C \|M f\|_{L^2},
$$
where $\widehat{\langle\nabla\rangle g}(\xi)=\sqrt{1+\xi^2} \hat{g}(\xi)$.
Changing the variables $\kappa=\sgn(\xi) \sqrt{1+\xi^2}$
and using Plancherel's theorem, we have:
\begin{eqnarray*}
\|e^{i t \langle\nabla\rangle} f\|_{L^2_t}^2 &=&
\int\Big|
\int e^{i t \sqrt{1+\xi^2}} \hat{f}(\xi) e^{i \xi x} d\xi
\Big|^2 dt \\
&=& \int
\Big|
\int_{|\kappa|>1} e^{i t \kappa} \hat{f}(\sqrt{\kappa^2-1})
e^{i x \sqrt{\kappa^2-1} } \frac{\kappa\,d\kappa}{\sqrt{\kappa^2-1}}
\Big|^2 dt \\
&=& \int_{|\kappa|>1} \frac{|\hat{f}(\sqrt{\kappa^2-1})|^2 \kappa^2\,d\kappa}{\kappa^2-1}  =
\int \frac{|\hat{f}(\xi)|^2 \sqrt{1+\xi^2} }{|\xi|} d\xi=\|M f\|_{L^2}^2.
\qedhere
\end{eqnarray*}
\end{proof}

Next, we present an estimate for the retarded term in the Duhamel representation,
in the spirit of Proposition~\ref{prop12}.
\begin{lemma}
\label{lemma20}
Let $\omega\in \varOmega$. There exists $C<\infty$ so that
\begin{equation}
\label{a510}
\sup_x \langle x\rangle^{-3}
\| \int_0^t e^{-(t-\tau)\bJ\bL} P_c(\omega) F(\tau, \cdot)\,d\tau\|_{L^2_t}
\leq C \| F\|_{(L^1_3)_x  L^2_t}.
\end{equation}
\end{lemma}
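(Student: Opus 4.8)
The plan is to take the Fourier transform in the time variable, under which the retarded Duhamel operator becomes, for each frequency $\Lambda$, the operator $R_{\bJ\bL}^{\pm}(-i\Lambda)P_c(\omega)$ (a one-sided boundary value of the resolvent), and then to close \eqref{a510} by Minkowski's integral inequality applied to the pointwise Green's function bounds of Proposition~\ref{prop9}. This is the retarded analogue of Proposition~\ref{prop12}, but technically lighter: because the right-hand side of \eqref{a510} carries an $(L^1_3)_x$ rather than an $L^2_x$ norm, the spatial convolution can be estimated crudely and the non-stationary phase analysis of Lemma~\ref{lemmaa-y10} is not needed.

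First, extend $F$ by zero to $\tau<0$ and set $\Phi(x,t)=\int_0^t e^{-(t-\tau)\bJ\bL}P_c(\omega)F(\tau)(x)\,d\tau$, which vanishes for $t<0$ and is the convolution in $t$ of $F$ with the operator-valued kernel $K(s)=\Theta(s)e^{-s\bJ\bL}P_c(\omega)$. Regularizing by $e^{-\varepsilon s}$ and invoking the limiting absorption principle (Proposition~\ref{prop9}) together with the representation \eqref{e-p-f}, one finds that the $t$-Fourier transform of $K$ is $\widehat K(\Lambda)=R_{\bJ\bL}^{\pm}(-i\Lambda)P_c(\omega)$, a one-sided boundary value of the resolvent (an honest bounded operator when $\abs{\Lambda}<1-\abs{\omega}$, where $-i\Lambda$ lies in the spectral gap); the precise branch is irrelevant below since \eqref{g-small} and \eqref{G-bound-away} hold for both $G^{\pm}$. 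Thus $\widehat K(\Lambda)$ is an integral operator whose kernel $\mathcal{K}(x,y,\Lambda)$ is the Green's function of Proposition~\ref{prop9} modified by the exponentially localized operator $P_d(\omega)$. By Plancherel in $t$, up to a fixed constant $\langle x\rangle^{-3}\norm{\Phi(x,\cdot)}_{L^2_t}$ equals $\langle x\rangle^{-3}\Norm{\int_\R \mathcal{K}(x,y,\cdot)\,\widehat F(y,\cdot)\,dy}_{L^2_\Lambda}$, while $\norm{F}_{(L^1_3)_x L^2_t}$ equals $\Norm{\langle y\rangle^{3}\norm{\widehat F(y,\cdot)}_{L^2_\Lambda}}_{L^1_y}$.

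Next, the constant in \eqref{g-small} is locally bounded in $\lambda$, hence bounded on any compact subset of $i\R$ (using that $E(\lambda,\omega)\ne 0$ for $\abs{\lambda}\ge 1-\abs{\omega}$ under Assumption~\ref{ass-1}); this, the uniform high-frequency bound \eqref{G-bound-away}, the spectral gap for $\abs{\lambda}<1-\abs{\omega}$ (where the Jost solutions are genuinely exponentially decaying or growing, so $\mathcal{K}$ is exponentially localized there), and the exponential decay of the kernel of $P_d(\omega)$ together give a single constant $C(\omega)<\infty$ with $\sup_{\Lambda\in\R}\abs{\mathcal{K}(x,y,\Lambda)}\le C(\omega)\langle x\rangle\langle y\rangle$ for all $(x,y)\in\R^2$. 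Then Minkowski's integral inequality in $y$ and Plancherel yield
\[
\Norm{\int_\R \mathcal{K}(x,y,\cdot)\,\widehat F(y,\cdot)\,dy}_{L^2_\Lambda}
\le \int_\R\sup_{\Lambda}\abs{\mathcal{K}(x,y,\Lambda)}\,\norm{\widehat F(y,\cdot)}_{L^2_\Lambda}\,dy
\le C(\omega)\,\langle x\rangle\,\norm{F}_{(L^1_3)_x L^2_t}.
\]
Multiplying by $\langle x\rangle^{-3}$ and taking the supremum over $x$ gives \eqref{a510}; the a priori manipulations are justified for $F$ Schwartz in $t$ and extended by density.

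The main difficulty is not any single estimate but the first step: one must verify that the retarded propagator transforms into a \emph{one-sided} boundary value of the resolvent (not the jump $R_{\bJ\bL}^{+}-R_{\bJ\bL}^{-}$ of \eqref{e-p-f}), make sense of the convolution when $K(s)$ is only bounded --- not integrable --- in $s$, and handle the projection $P_c(\omega)$ and the gap $\abs{\lambda}<1-\abs{\omega}$; all of this is routine once Proposition~\ref{prop9} is available. Should one prefer to parallel the proof of Proposition~\ref{prop12} instead, \eqref{a510} also follows from the low-/high-frequency splitting used there --- the Born expansion \eqref{a600} with the free kernel $e^{\pm i k\abs{x-y}}\sgn(x-y)$ at high frequency, and the explicit Green's function at low frequency --- with the simplification that, since the data lies in $(L^1_3)_x L^2_t$, Minkowski's inequality replaces Plancherel in $x$ and the oscillatory integration by parts of Lemma~\ref{lemmaa-y10} is unnecessary.
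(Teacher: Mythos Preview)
Your approach is correct and genuinely different from the paper's. The paper does not Fourier-transform the retarded propagator directly; instead it first handles the \emph{non-retarded} operator $\int_0^\infty e^{-(t-\tau)\bJ\bL}P_c F\,d\tau$ by a $T^*T$ argument (compose the two estimates of Proposition~\ref{prop12}), then recovers $\int_0^t$ from $\int_0^\infty$ via a decomposition trick of Mizumachi type: for $F=g_1(t)g_2(x)$ one writes $U=2\int_0^t+\big(\int_{-\infty}^0-\int_0^\infty\big)$ with $\widehat{U}(\Lambda)\propto\check g_1(\Lambda)\,[R^+_{\bJ\bL}(i\Lambda)+R^-_{\bJ\bL}(i\Lambda)]g_2$, estimates $U$ by Plancherel and the uniform bound $\sup_\Lambda\|R^\pm(i\Lambda)\|_{L^1_3\to L^\infty_{-3}}$, and finally passes from tensor products to general $F\in(L^1_3)_xL^2_t$ by a Krein--Milman weak-$*$ density argument in the measure space $(\mathscr M_3)_x L^2_t$.

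Your route --- recognizing $\int_0^t$ as convolution with $\Theta(s)e^{-s\bJ\bL}P_c$, identifying its Fourier transform as the \emph{one-sided} boundary value $R^-_{\bJ\bL}(-i\Lambda)P_c$, and then closing with Minkowski in $y$ --- is more direct and avoids both the tensor-product reduction and the Krein--Milman step. The cost is that the identity $\widehat{K*F}=\widehat K\,\widehat F$ requires justification when $K$ is only bounded (not integrable) in $s$; your $e^{-\varepsilon s}$ regularization is exactly the right mechanism, but one must check that $R(-i\Lambda-\varepsilon)P_c\widehat F(\Lambda)$ converges in $L^2_\Lambda$ (for fixed $x$, weighted by $\langle x\rangle^{-3}$) as $\varepsilon\to0+$, using that the Green-function bounds of Proposition~\ref{prop9} extend continuously off the axis. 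The paper's decomposition sidesteps this by working with pieces ($\int_0^\infty$ and $U$) for which the Fourier representation is either unnecessary or manifestly convergent. Both arguments ultimately rest on the same pointwise estimate $\sup_\Lambda|G^\pm(x,y,i\Lambda,\omega)|\le C(\omega)\langle x\rangle\langle y\rangle$ derived from \eqref{g-small}--\eqref{G-bound-away}; your observation that the gap region $|\Lambda|<1-|\omega|$ and the $P_d$ correction only contribute exponentially localized kernels is correct and fills what Proposition~\ref{prop9} does not state explicitly.
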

\begin{proof}
It is well-known that these type of estimates are essentially dual estimates to the one presented in Proposition~\ref{prop12}. In fact, recall that from Proposition~\ref{prop12},
$$
\|\int_0^\infty e^{\tau\bJ\bL} P_c(\omega) F(\tau, \cdot)\,d\tau\|_{L^2_x}
\leq C \|F\|_{(L^1_3)_x  L^2_t}.
$$
Thus, if one deals with the related quantity
$\int_0^\infty e^{-(t-\tau)\bJ\bL} P_c(\omega) F(\tau, \cdot)\,d\tau$,
we have, by virtue of Proposition~\ref{prop12} and its dual estimate,
\begin{eqnarray*}
\|\langle x\rangle^{-3}
\int_0^\infty e^{-(t-\tau)\bJ\bL} P_c(\omega) F(\tau, \cdot)\,d\tau\|_{L^\infty_x L^2_t}
&=& \|\langle x\rangle^{-3}
e^{-t \bJ\bL}\int_0^\infty e^{\tau\bJ\bL}P_c(\omega) F(\tau, \cdot)\,d\tau\|_{L^\infty_x L^2_t}
\\
&
\leq&
C \|\int_0^\infty e^{\tau\bJ\bL}P_c(\omega) F(\tau, \cdot)\,d\tau\|_{L^2_x}
\leq C \|F\|_{(L^1_3)_x L^2_t}.
\end{eqnarray*}
However, as one observes quickly, we have to deal with $\int_0^t$
in the retarded term in the Duhamel representation,
instead of $\int_0^\infty$ in our previous consideration.
This is a non-trivial issue, which has been resolved in the literature, see \cite[Lemma 11]{MR2511047}
and \cite[Lemma 2]{MR2985264}.
 In short, these results allows one to write for $F(t,x)=g_1(t) g_2(x)$,
\begin{eqnarray*}
U(t,\cdot) &=& 2\int_0^t e^{(t-\tau)\bJ\bL} P_c(\omega) F(\tau, \cdot)\,d\tau
+\left( \int_{-\infty}^0-\int_0^\infty \right)
e^{(t-\tau)\bJ\bL} P_c(\omega) F(\tau, \cdot)\,d\tau, \\
 U(t,x) &=& \frac{i}{\sqrt{2\pi}}
\int_{-\infty}^\infty e^{-i t \Lambda}\check{g_1}(\Lambda) \left(\left[
R^+_{\bJ\bL}(i\Lambda) + R^-_{\bJ\bL}(i\Lambda) \right] g_2\right)(x)\,d\Lambda.
\end{eqnarray*}
Since we have already shown the estimates for the term $\int_0^\infty \ldots$ (and the estimates for $\int_{-\infty}^0 \ldots$ are similar), it remains to show the appropriate estimates for $U$. By the Plancherel theorem in the $t$-variable,
\begin{eqnarray*}
&&
\|U(t,\cdot)\|_{(L^\infty_{-3})_x L^2_t}=
\Big\|
\langle x\rangle^{-3} \|\check{g_1}(\Lambda)[
R^+_{\bJ\bL}(i\Lambda) + R^-_{\bJ\bL}(i\Lambda)]g_2\|_{L^2_\Lambda}
\Big\|_{L^\infty_x}
\\
&&
\leq
C
\|
\check{g_1}\|_{L^2_\Lambda} \sup_{\Lambda\in\R}
\big\|R^\pm_{\bJ\bL}(i\Lambda)\big\|_{L^1_3\to L^\infty_{-3}}
\|g_2\|_{(L^1_3)_x}
\leq C \|g_1\|_{L^2_t} \|g_2\|_{(L^1_3)_x}.
\end{eqnarray*}
All in all, we have shown the required estimate \eqref{a510} for the case $F=g_1(t) g_2(x)$. Note however that the domain space
$(L^1_3)_x L^2_t$ may be embedded in the bigger space
$(\mathscr{M}_3)_x L^2_t$, where $\mathscr{M}_3$
is the space of Borel measures with the weight $\langle x\rangle^3$.
By the Krein--Milman theorem, elements of this space
may be represented as weak* limits of linear combinations of Dirac masses of the form
$\delta(x-a) g(t)$. Thus, to show bounds
of the form $T:\;(\mathscr{M}_3)_x L^2_t\to Y$
for any linear operator $T$ and Banach space $Y$,
it suffices to prove such an estimate for elements
$F=g_2(x) g_1(t)$,
with $g_2\in \mathscr{M}_3$, $g_1\in L^2$ as we have done above.
\end{proof}

\subsection{Further linear estimates for $e^{t\bJ\bL}$}
\label{sect-further-linear-estimates}

We will now state and derive the Strichartz estimates.
\begin{definition}
We say that a pair $(q,r)$ is Strichartz-admissible (for the Dirac equation in one spatial dimension), if
$$
q\geq 2,
\qquad
r\geq 2,
\qquad
\frac{2}{q}+\frac{1}{r}\leq \frac{1}{2}.
$$
Equivalently, the admissible set is a
triangle in the $(\frac{1}{q}, \frac{1}{r})$ plane,
with endpoints corresponding to $(q,r)=(4, \infty)$
and $(q,r)=(\infty,2)$.
\end{definition}
In view of the representation of the Strichartz-admissible set as a triangle in the $(\frac{1}{q}, \frac{1}{r})$ coordinates,
we will state the estimates only at the vertices,
with the estimates in the interior of the triangle
obtained by interpolation.

Next, before we can state our Strichartz type estimates, we need a variant of the well-known Christ--Kiselev lemma,
an abstract result which
allows one to pass between estimates for dual operators
and retarded terms in the Duhamel representation.
We state a version which is due to Smith and Sogge \cite{MR1789924}.
\begin{lemma}
\label{kiselev}
Let $X,\,Y$ be Banach spaces and
$\ck:L^p(\R;X)\to L^q(\R,Y)$ be a bounded linear operator such
that $\ck f(t)=\int_{-\infty}^\infty K(t,s) f(s)\,ds$.
Then the operator
\begin{equation}
\label{a700}
\tilde{\ck} f(t)=\int_0^t K(t,s) f(s)\,ds
\end{equation}
is bounded from $L^p(\R;X)$ to $L^q(\R,Y)$, provided that $p<q$. Moreover, there is
$C_{p,q} > 0$ such that
$$
\|\tilde{\ck}\|_{L^p(\R;X)\to L^q(\R,Y)} \leq C_{p,q}
\|\ck\|_{L^p(\R;X)\to L^q(\R,Y)}.
$$
\end{lemma}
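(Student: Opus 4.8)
The statement is the Christ--Kiselev lemma in the Smith--Sogge form; the plan is to follow the standard dyadic (Whitney) decomposition argument, which uses only the boundedness of $\ck$ and the strict inequality $p<q$. First I would reduce the retarded operator $\tilde{\ck}$ to the ``lower triangular'' operator $Sf(t):=\int\sb{\{s<t\}}K(t,s)f(s)\,ds$: writing $\int\sb 0\sp t=\int\sb{-\infty}\sp t-\int\sb{-\infty}\sp 0$ (valid for every $t\in\R$) gives
\[
\tilde{\ck}f=Sf-\ck(\mathbf{1}\sb{(-\infty,0)}f),
\]
and the second term is already bounded from $L^p(\R;X)$ to $L^q(\R;Y)$ with norm at most $\|\ck\|$. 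So it suffices to estimate $S$.

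Next I would normalize $\|f\|\sb{L^p(\R;X)}=1$ and introduce the finite measure $d\mu(s)=\|f(s)\|\sb X^p\,ds$, which is non-atomic because it is absolutely continuous. For each scale $n\ge 0$ I would partition $\supp\mu$ into $2^n$ consecutive intervals $I\sb{n,1},\dots,I\sb{n,2^n}$ of equal mass $\mu(I\sb{n,k})=2^{-n}$, arranged so that passing from scale $n$ to scale $n+1$ bisects each interval, $I\sb{n,k}=I\sb{n+1,2k-1}\cup I\sb{n+1,2k}$. Peeling off at each scale the rectangles in which $s$ lies in a left half-interval and $t$ in the adjacent right half-interval, and recursing on the two diagonal sub-squares, yields (modulo a Lebesgue-null set)
\[
\{s<t\}=\bigsqcup\sb{n\ge 1}\ \bigsqcup\sb{k=1}\sp{2^{n-1}}\ \{(t,s):\ s\in I\sb{n,2k-1},\ t\in I\sb{n,2k}\},
\]
and hence $Sf=\sum\sb{n\ge 1}\sum\sb{k=1}\sp{2^{n-1}}T\sb{n,k}f$ with $T\sb{n,k}f:=\mathbf{1}\sb{I\sb{n,2k}}\,\ck(\mathbf{1}\sb{I\sb{n,2k-1}}f)$.

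Finally, for fixed $n$ the output intervals $I\sb{n,2k}$ are pairwise disjoint, so using the boundedness of $\ck$ together with $\|f\|\sb{L^p(I\sb{n,2k-1};X)}^p=\mu(I\sb{n,2k-1})=2^{-n}$ one gets
\[
\Big\|\sum\sb{k}T\sb{n,k}f\Big\|\sb{L^q(\R;Y)}^q
=\sum\sb{k=1}\sp{2^{n-1}}\big\|\ck(\mathbf{1}\sb{I\sb{n,2k-1}}f)\big\|\sb{L^q(\R;Y)}^q
\le\|\ck\|^q\,2^{n-1}\,2^{-nq/p},
\]
so $\|\sum\sb k T\sb{n,k}f\|\sb{L^q(\R;Y)}\le\|\ck\|\,2^{n(1/q-1/p)}$; since $p<q$ the exponent is negative, the geometric series in $n$ converges, and one obtains $\|Sf\|\sb{L^q(\R;Y)}\le C\sb{p,q}\|\ck\|$ with $C\sb{p,q}=2^{1/q-1/p}(1-2^{1/q-1/p})^{-1}$. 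Combined with the first reduction this gives the claim. The only delicate point is the construction of the nested equal-mass partitions $\{I\sb{n,k}\}$, which is where absolute continuity of $\mu$ (equivalently, $f\in L^p$) is used, and the verification that the displayed family is a genuine disjoint cover of the triangle up to null sets; the remaining steps are a mechanical application of the hypothesis on $\ck$, and the convergence of the series over $n$ is precisely where the assumption $p<q$ enters.
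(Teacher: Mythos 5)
The paper does not prove Lemma~\ref{kiselev}; it is quoted as a known result with a reference to Smith and Sogge, so there is no in-text argument to compare your proof against. Your write-up is the standard Whitney (dyadic-square) decomposition proof of the Christ--Kiselev lemma, and it is correct: the reduction of $\tilde{\ck}$ to the lower-triangular operator $S$ via $\int_0^t=\int_{-\infty}^t-\int_{-\infty}^0$ is valid for both signs of $t$; the nested equal-$\mu$-mass partition exists because $d\mu=\|f(s)\|_X^p\,ds$ is a finite non-atomic measure; the anti-diagonal rectangles tile $\{s<t\}$ up to a set on which $f$ vanishes; and at fixed scale the disjointness of the output intervals $I_{n,2k}$ lets you sum $\ell^q$-orthogonally, producing the geometric factor $2^{n(1/q-1/p)}$ whose summability is precisely where $p<q$ enters. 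Two cosmetic remarks that do not affect the conclusion: the stated constant $C_{p,q}$ should also absorb the unit-norm contribution from the split-off term $\ck(\mathbf{1}_{(-\infty,0)}f)$ (and the exponent bookkeeping actually yields the slightly better factor $2^{-1/q}2^{n(1/q-1/p)}$), and when $q=\infty$ the $\ell^q$-sum over $k$ must be replaced by a supremum, which still gives $2^{-n/p}$ and a convergent series.
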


\begin{lemma}
\label{lemma30}
Let $(q,r)$ be a Strichartz-admissible pair.
Then, for any $\epsilon>0$ and $s\geq 0$,
there is $C_\epsilon<\infty$ so that
\begin{eqnarray}
\label{a650}
&&\big\|
e^{t\bJ\bL} P_c(\omega)f
\big\|_{L^4_t L^\infty_x}\leq
C \|f\|_{H^{3/4+\epsilon}}, \\
\label{a660}
& &
\big\|e^{t\bJ\bL} P_c(\omega)f
\big\|_{L^\infty_t H^s_x}\leq
C \|f\|_{H^{s}}, \\
\label{a665}
&&
\Big\|
\int_{-\infty}^\infty e^{\tau\bJ\bL} P_c(\omega)F(\tau, \cdot)
\Big\|_{L^\infty_t H^1_x \cap L^q_t L^r_x}\leq \|F\|_{L^1_t H^1_x}, \\
\label{a670}
&&
\Big\|
\int_0^t e^{(t-\tau)\bJ\bL} P_c(\omega)F(\tau, \cdot)
\Big\|_{L^\infty_t H^1_x \cap L^q_t L^r_x}\leq \|F\|_{L^1_t H^1_x}.
\end{eqnarray}
\end{lemma}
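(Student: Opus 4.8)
The plan is to derive the four Strichartz-type estimates \eqref{a650}--\eqref{a670} from the weighted decay estimates of Proposition~\ref{prop12} together with the $L^2$ conservation properties of the semigroup $e^{t\bJ\bL}P_c(\omega)$ on $\bX_c(\bJ\bL)$, using a $TT^*$ argument and the Christ--Kiselev lemma (Lemma~\ref{kiselev}). First I would establish \eqref{a660}: since $\bJ\bL$ restricted to $\bX_c(\bJ\bL)$ has purely essential (imaginary) spectrum by Assumption~\ref{ass-1} and the absence of embedded eigenvalues in $\bX$, the group $e^{t\bJ\bL}P_c(\omega)$ is bounded on $L^2$ uniformly in $t$; the $H^s$ bound then follows by commuting $\langle\nabla\rangle^s$ through $e^{t\bJ\bL}$ modulo lower-order terms coming from $\bW$ (exponentially localized by Lemma~\ref{lemma10}), as in the remark after Proposition~\ref{prop12}, and then reabsorbing these via Gronwall. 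For the dispersive endpoint \eqref{a650}, I would interpolate: the weighted estimate $\sup_x\langle x\rangle^{-3}\|[e^{t\bJ\bL}P_c f](x)\|_{L^2_t}\le C\|f\|_{L^2}$ from Proposition~\ref{prop12} already gives an $L^2_t$ control with spatial weight, and combining it (via complex interpolation in the spatial variable, or via a Sobolev embedding $H^{1/2+\epsilon}\hookrightarrow L^\infty_x$ in one dimension together with the $L^\infty_t H^s$ bound) with the free Dirac Strichartz estimates that underlie Lemma~\ref{lemma133} yields the mixed-norm bound $\|e^{t\bJ\bL}P_c f\|_{L^4_t L^\infty_x}\lesssim\|f\|_{H^{3/4+\epsilon}}$. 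The loss of $3/4+\epsilon$ derivatives is dictated precisely by the one-dimensional Strichartz admissibility (the vertex $(q,r)=(4,\infty)$), which is why $k\ge 2$ is needed downstream to close the nonlinear argument.

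Next I would treat the inhomogeneous estimate \eqref{a665}. Writing $\int_{-\infty}^\infty e^{\tau\bJ\bL}P_c(\omega)F(\tau,\cdot)\,d\tau$ and pairing against a test function in the dual of $L^\infty_t H^1_x\cap L^q_t L^r_x$, the bound reduces to the homogeneous estimates \eqref{a650}--\eqref{a660} applied to the dual pair, exactly as in the standard $TT^*$ reduction: the operator $F\mapsto\int e^{\tau\bJ\bL}P_c F\,d\tau$ factors as $T^*$ where $T=e^{t\bJ\bL}P_c$, so its boundedness $L^1_t H^1_x\to L^\infty_t H^1_x$ is dual to the uniform $L^2$-boundedness, and its boundedness $L^1_t H^1_x\to L^q_t L^r_x$ is obtained by combining the dual of \eqref{a660} with the Strichartz bound \eqref{a650} and interpolating over the admissible triangle. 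Finally, \eqref{a670}: the retarded integral $\int_0^t e^{(t-\tau)\bJ\bL}P_c(\omega)F(\tau,\cdot)\,d\tau$ differs from the full integral \eqref{a665} by replacing the kernel $K(t,\tau)=\Theta$(anything) with the causal truncation $\Theta(t-\tau)$; since the target exponents satisfy $p=1<q$ for every Strichartz-admissible $q$ (and $p=1<\infty$ for the $L^\infty_t H^1_x$ component), Lemma~\ref{kiselev} applies verbatim and transfers the bound \eqref{a665} to \eqref{a670} with only a change in the constant.

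The main obstacle I expect is the derivative bookkeeping in \eqref{a660} and in the $H^1_x$ components of \eqref{a665}--\eqref{a670}: since $\p_x$ does not commute with $e^{t\bJ\bL}$ (the potential $\bW$ depends on $x$), one must justify that $\p_x e^{t\bJ\bL}P_c f$ obeys the same estimates as $e^{t\bJ\bL}P_c f$ with $f$ replaced by $\p_x f$, up to error terms $\int_0^t e^{(t-\tau)\bJ\bL}P_c(\omega)(\text{terms involving }\p_x\bW)\,d\tau$; these errors are handled because $\p_x\bW$ is again exponentially localized (Lemma~\ref{lemma10} and the decay of $\phi_\omega$, $\p_x\phi_\omega$ from Proposition~\ref{Solitons}), so they land in $(L^1_3)_x L^2_t$-type spaces where Lemma~\ref{lemma20} and Proposition~\ref{prop12} apply, and one closes by a Gronwall/bootstrap argument on a finite time interval before iterating. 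A secondary technical point is ensuring that $P_c(\omega)$ commutes (or commutes up to smoothing) with $\langle\nabla\rangle^s$ so that the $H^s$-valued estimates are genuinely about the continuous-spectrum part; this is immediate since $P_d(\omega)$ from \eqref{def-pd} is a finite-rank projection onto Schwartz-class functions ($\bmupphi,\p_\omega\bmupphi$ and their $\bJ$-images decay exponentially), hence bounded on every $H^s$.
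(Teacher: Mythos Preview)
Your reduction of \eqref{a665} to the homogeneous estimates via $TT^*$ and of \eqref{a670} to \eqref{a665} via Christ--Kiselev is correct and matches the paper. The gap is in your treatment of the homogeneous estimates \eqref{a650} and \eqref{a660}. For \eqref{a660} with $s=0$, you assert that $e^{t\bJ\bL}P_c(\omega)$ is uniformly bounded on $L^2$ because the spectrum on $\bX_c$ is purely imaginary; but $\bJ\bL$ is neither self-adjoint nor normal, and for such operators imaginary spectrum alone does not yield a bounded group (the resolvent bounds in Proposition~\ref{prop9} are only $L^2_s\to L^2_{-s}$, not $L^2\to L^2$, so Gearhart--Pr\"uss does not apply in unweighted $L^2$). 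For \eqref{a650}, the mechanisms you name---complex interpolation in $x$, or Sobolev embedding $H^{1/2+\epsilon}\hookrightarrow L^\infty_x$ combined with the $L^\infty_t H^s$ bound---do not produce an $L^4_t L^\infty_x$ estimate: the Sobolev route gives only $L^\infty_t L^\infty_x$ with no time decay, and you cannot interpolate between an estimate for the \emph{perturbed} evolution $e^{t\bJ\bL}$ and a Strichartz estimate for the \emph{free} evolution $e^{t\bJ\bL_0}$, since these are different operators.

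The missing idea is the Duhamel expansion of the perturbed flow in terms of the free one:
\[
e^{t\bJ\bL}P_c f = e^{t\bJ\bL_0}P_c f + \int_0^t e^{(t-s)\bJ\bL_0}\bJ\bW\, e^{s\bJ\bL}P_c f\,ds.
\]
The first term is handled by the free Dirac/Klein--Gordon Strichartz estimates (Nakamura--Ozawa; this is \eqref{a850}--\eqref{a851} in the paper). For the integral, factor $\bW=V_1V_2$ with both pieces exponentially decaying, apply Christ--Kiselev to replace $\int_0^t$ by $\int_{-\infty}^\infty$, and then estimate in two stages: the inner factor $\|V_2 e^{s\bJ\bL}P_c f\|_{L^2_t H^{3/4+\epsilon}_x}$ is controlled by $\|f\|_{H^{3/4+\epsilon}}$ via the weighted smoothing estimate of Proposition~\ref{prop12} (this is where the spatial localization of $V_2$ absorbs the weight $\langle x\rangle^{-3}$), while the outer operator $G\mapsto \int e^{-s\bJ\bL_0}\bJ V_1 G\,ds$ maps $L^2_t H^{3/4+\epsilon}_x$ into $H^{3/4+\epsilon}_x$ by Lemma~\ref{lemma133} (the dual free smoothing estimate, using that $|\p_x|^{3/4}M$ is nonsingular at low frequency). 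The same Duhamel scheme with $(q,r)=(\infty,2)$ gives \eqref{a660}, since $e^{t\bJ\bL_0}$ is unitary on $L^2$. Without this perturbative step you have no route from the local smoothing of Proposition~\ref{prop12} to unweighted Strichartz control of the perturbed flow.
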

\begin{proof}
We start with the estimates \eqref{a650} and \eqref{a660}. Let us note that we can easily upgrade \eqref{a650} to add derivatives on the evolution.
An interpolation between these two estimates
then yields (Cf. \ref{a850} below for the free Dirac case):
\begin{equation}
\label{a905}
\|e^{t\bJ\bL} P_c(\omega)f\|_{L^q_t W^{s,r}_x}\leq
C_\epsilon
\|f\|_{H^{s+\frac{1}{2}+\frac{1}{q}-\frac{1}{r}+\epsilon}},
\end{equation}
for $s\geq 0$ and for all Strichartz-admissible pairs $(q,r)$.

\medskip

The proof of \eqref{a670} is based on an application of the dual to \eqref{a905} and Lemma~\ref{kiselev}. Thus, it remains to show \eqref{a650} and \eqref{a670}. The approach follows what has become standard in recent years: we employ the available results for the ``free'' Dirac operator, in addition to the weighted decay estimates that we have proved in the previous section, namely Proposition~\ref{prop12} and Lemma~\ref{lemma20}. In fact, we follow closely the approach in \cite[Lemma 4]{MR2985264}.

Let us recall first the estimates for the free Dirac operator.
Let us prove the Strichartz estimates
for $e^{i t D_m}$ in the form \eqref{a650}, \eqref{a660}, \eqref{a670}.
The corresponding linear equations
$$
i\p_t h_1 = h_1+\p_x h_2,
\qquad
i \p_t h_2=-\p_x h_1-h_2
$$
reduce to the Klein--Gordon equation for each component $h_1, h_2$, as we have shown in the proof of Lemma~\ref{lemma133}. Thus, the ``free'' Dirac estimates follow from the respective estimates for the Klein--Gordon equation, which can be found in the recent work of Nakamura--Ozawa, \cite[Lemma 2.1]{MR1855424} (where one takes
$\theta = 1$, $\Lambda = 3/2$, $n = 1$).
These estimates read as follows: for every $\epsilon>0$,
\begin{equation}
\label{a850}
\|e^{-i t D_m} f\|_{L^q_t W^{s,r}_x}\leq C_\epsilon \|f\|_{H^{s+\frac{1}{2}+\frac{1}{q}-\frac{1}{r}+\epsilon}}.
\end{equation}
These are of course the variants of the estimates \eqref{a650} and \eqref{a660}; the estimate \eqref{a670} holds in a similar manner for the free Dirac case. One important improvement of \eqref{a850}, which is implicit in \cite{MR1855424},\footnote{This is the estimate $(2.15)$ in \cite{MR1855424},
which holds with the homogeneous Besov spaces version}
concerns the low frequency component of $f$. Namely, for the particular case $q=4, r=\infty$, we have:
\begin{equation}
\label{a851}
\|e^{-i t D_m} f\|_{L^4_t L^\infty_x}\leq C_\epsilon \||\p_x|^{3/4} f\|_{H^{\epsilon}}.
\end{equation}

Let us now consider
$\bJ\bL=\bJ(\bL_0+\bW)$, with a potential $\bW$ of Schwartz class.
We may write the perturbed evolution in terms of the free evolution as follows:
$$
e^{t \bJ\bL} f = e^{t \bJ\bL_0}f + \int_0^t e^{(t-s)\bJ\bL_0} \bJ\bW e^{s \bJ\bL} f\,ds.
 $$
 We now have to deal with the two endpoint cases of Strichartz pairs:
$(q,r)=(4, \infty)$ and $(q,r)=(\infty,2)$. We only present the first case, the second being similar.
To that end,
let $\bW(x)=V_1(x) V_2(x)$,
with
$V_1(x)=e^{-\updelta_\varOmega \langle x\rangle}$
and $V_2(x)=e^{\updelta_\varOmega \langle x\rangle} \bW(x)$,
with
\begin{equation}\label{def-updelta-varomega}
\updelta_\varOmega
=\inf\sb{\omega\in\varOmega}\updelta_\omega
=\inf\sb{\omega\in\varOmega}\sqrt{1-\omega^2}>0
\end{equation}
so that $V_2(x)$ is also exponentially decaying
(Cf. \eqref{w-small}).
For
$f\in H^{\frac{3}{4}+\epsilon}$,
we have:
 \begin{eqnarray*}
\|e^{t\bJ\bL} P_c(\omega) f\|_{L^4_t L^\infty_x}&\leq &
 \|e^{t\bJ\bL_0} f\|_{L^4_t L^\infty_x}+ \left\| \int_0^t e^{(t-s)\bJ\bL_0}
\bJ V_1 V_2 e^{s\bJ\bL} P_c(\omega) f\,ds \right\|_{L^4_t L^\infty_x} \\
&\leq&C_\ve \|f\|_{H_x^{3/4+\ve}}+ \left\| \int_0^t e^{(t-s)\bJ\bL_0}
 \bJ V_1 V_2 e^{s\bJ\bL} P_c(\omega) f\,ds \right\|_{L^4_t L^\infty_x}.
\end{eqnarray*}
We now use the Christ--Kiselev lemma (Lemma~\ref{kiselev})
with
$K(t,s)=e^{(t-s)\bJ\bL_0} \bJ V_1:
L^2_t H^{\frac{3}{4}+\epsilon}\to L^4_t L^\infty_x$.
Following \eqref{a700},
$$
\tilde{\ck} [V_2 e^{s\bJ\bL} P_c(\omega) f]
=
\int_0^t e^{(t-s)\bJ\bL_0}
\bJ V_1 V_2 e^{s\bJ\bL} P_c(\omega) f\,ds.
$$
According to Lemma~\ref{kiselev}, we have
\begin{eqnarray*}
\|\int_0^t e^{(t-s)\bJ\bL_0}
 \bJ V_1 V_2 e^{s \bJ\bL} P_c(\omega) f\,ds\|_{L^4_t L^\infty_x}
=
\|\tilde{\ck}
[V_2 e^{s\bJ\bL} P_c(\omega) f]\|_{L^4_t L^\infty_x}
\\
\leq
C \|\ck\|_{L^2_t H^{\frac{3}{4}+\epsilon}\to L^4_t L^\infty_x}
 \|V_2 e^{t\bJ\bL} P_c(\omega) f\|_{L^2_t H^{\frac{3}{4}+\epsilon}}.
\end{eqnarray*}
From the interpolation between the cases $s=0$ and $s=1$,
the decay and smoothness properties of $V_2$
and the weighted decay estimate from Proposition~\ref{prop12},
we conclude that
$
\|V_2 e^{t\bJ\bL} P_c(\omega) f\|_{L^2_t H^{s}_x}\leq C\|f\|_{H^s},
$
and we arrive at the estimate
$\|V_2 e^{t\bJ\bL} P_c(\omega) f\|_{L^2_t H^{\frac{3}{4}+\epsilon}}
\leq C \|f\|_{H^{\frac{3}{4}+\epsilon}}$.

\medskip

It remains to obtain the appropriate estimate for
$\|\ck\|_{L^2_t H^{\frac{3}{4}+\epsilon}\to L^4_t L^\infty_x} $. We have again by the Strichartz estimates for the free Dirac evolution
(more precisely, the version of \eqref{a851}):
\begin{eqnarray*}
 \|\int_{-\infty}^\infty e^{(t-s)\bJ\bL_0}\bJ V_1 G(s, \cdot)\,ds\|_{L^4_t L^\infty_x} &=& \|e^{t\bJ\bL_0} \int_{-\infty}^\infty e^{-s \bJ\bL_0} \bJ V_1 G(s, \cdot)\,ds\|_{L^4_t L^\infty_x}\\
&\leq&C\||\p_x|^{3/4}\int_{-\infty}^\infty e^{-s \bJ\bL_0} \bJ V_1 G(s, \cdot)\,ds\|_{H^{\epsilon}}.
\end{eqnarray*}
 From Lemma~\ref{lemma133} (and more precisely from \eqref{a801}), we have
$$
\||\p_x|^{3/4}\int_{-\infty}^\infty e^{-s\bJ\bL_0} \bJ V_1 G(s, \cdot)\,ds\|_{H^\epsilon}\leq
C
\||\p_x|^{3/4} M [\bJ V_1 G(s)]\|_{L^1_x H^{\epsilon}}.
$$
Note that in the low frequencies,
$|\p_x|^{3/4} M\sim |\p_x|^{1/4}$ is not singular anymore,
while in the high frequencies one has
$|\p_x|^{3/4} M\sim |\p_x|^{3/4}$.
Thus, with $V_1$ in the Besov space $B_2^{1,1}$, we have
$$
\||\p_x|^{3/4} M [\bJ V_1 G(s)]\|_{L^1_x H^{\epsilon}_s}\leq C \|V_1\|_{B_2^{1,1}} \|G\|_{L^2_x H^{3/4+\epsilon}}.
$$
With that, Lemma~\ref{lemma30} is proved in full.
\end{proof}

Our next lemma is another essential component of the fixed point arguments to be presented in Section~\ref{sec:pmt}. Namely, it connects the Strichartz estimates to the weighted decay estimates.
\begin{lemma}
\label{lemma50}
There is $C<\infty$ such that
\begin{eqnarray}
\label{a910}
&&\|\int_0^t e^{(t-\tau)\bJ\bL}
P_c(\omega) F(\tau, \cdot)\,d\tau\|_{L^\infty_t H^1_x \cap L^4_t L^\infty_x} \leq C
[\|F\|_{(L^1_3)_x L^2_t}+\|\p_x F\|_{(L^1_3)_x L^2_t}],
\\
\label{a970}
&&\sup_x \langle x\rangle^{-3}
\|\int_0^t e^{(t-\tau)\bJ\bL} P_c(\omega) F(\tau, \cdot)\,d\tau\|_{L^2_t}
\leq C \|F\|_{L^1_t L^2_x}.
\end{eqnarray}
\end{lemma}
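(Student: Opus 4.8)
The plan is to handle the two estimates separately, since \eqref{a970} is elementary while \eqref{a910} is where the machinery of this section gets used.

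\textbf{Estimate \eqref{a970}.} I would argue directly, without duality. Assuming (as in the applications) that $F(\tau,\cdot)=0$ for $\tau<0$, write the retarded integral as $\int_0^\infty\Theta(t-\tau)\,e^{(t-\tau)\bJ\bL}P_c(\omega)F(\tau,\cdot)\,d\tau$ and apply Minkowski's integral inequality in the Banach space $\{g(x,t):\ \sup_x\langle x\rangle^{-3}\|g(x,\cdot)\|_{L^2_t}<\infty\}$ to pull the $\tau$-integration outside the norm:
\[
\sup_x\langle x\rangle^{-3}\Big\|\int_0^t e^{(t-\tau)\bJ\bL}P_c(\omega)F(\tau,\cdot)\,d\tau\Big\|_{L^2_t}
\le\int_0^\infty\sup_x\langle x\rangle^{-3}\big\|\Theta(t-\tau)[e^{(t-\tau)\bJ\bL}P_c(\omega)F(\tau,\cdot)](x)\big\|_{L^2_t}\,d\tau.
\]
For each fixed $\tau$ the integrand equals $\sup_x\langle x\rangle^{-3}\|[e^{s\bJ\bL}P_c(\omega)F(\tau,\cdot)](x)\|_{L^2_{s\ge 0}}$, which is at most $C\|F(\tau,\cdot)\|_{L^2_x}$ by the first estimate of Proposition~\ref{prop12} (restricting to $s\ge 0$ only shrinks the $L^2_s$ norm). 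Integrating in $\tau$ gives $C\int\|F(\tau,\cdot)\|_{L^2_x}\,d\tau=C\|F\|_{L^1_tL^2_x}$, which is \eqref{a970}.

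\textbf{Estimate \eqref{a910}: reduction and the $L^\infty_tH^1_x$ part.} First I would remove the derivative. Because $\bL(\omega)=\bD_m-\omega I_4+\bW$ with $\bD_m$ as in \eqref{def-DD}, the commutator $[\p_x,\bJ\bL]=\bJ(\p_x\bW)$ is again an exponentially decaying potential (Lemma~\ref{lemma10}), and $[\p_x,P_c(\omega)]$ is smooth and exponentially localized; commuting $\p_x$ through $e^{t\bJ\bL}$ and $P_c(\omega)$ by Duhamel's formula produces, besides the main term $\int_0^t e^{(t-\tau)\bJ\bL}P_c(\omega)\p_xF\,d\tau$, only terms carrying an extra interior factor $\bJ(\p_x\bW)$, which are genuinely lower order. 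So it suffices to prove $\|\int_0^t e^{(t-\tau)\bJ\bL}P_c(\omega)F\,d\tau\|_{L^\infty_tL^2_x\cap L^4_tL^\infty_x}\le C\|F\|_{(L^1_3)_xL^2_t}$ for $F$ and for the lower-order contributions. The $L^\infty_tL^2_x$ bound I would obtain by duality: fixing $t$, pairing with $g\in L^2_x$, and moving the semigroup onto $(\bJ\bL)^{*}=-\bL\bJ$, whose resolvent is the adjoint of that of $\bJ\bL$, so that the weighted decay estimate of Proposition~\ref{prop12} holds verbatim for $e^{s(\bJ\bL)^{*}}P_c(\omega)^{*}$ (the bounds of Proposition~\ref{prop9} being invariant under $\lambda\mapsto\bar\lambda$); a fibrewise Cauchy--Schwarz in $\tau$ followed by integration in $x$ against $\langle x\rangle^3$ then yields $C\|F\|_{(L^1_3)_xL^2_t}$.

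\textbf{Estimate \eqref{a910}: the $L^4_tL^\infty_x$ part --- the main obstacle.} This is where a weighted space--time datum must be converted into a Strichartz bound. I would set $U(t)=\int_0^t e^{(t-\tau)\bJ\bL}P_c(\omega)F(\tau,\cdot)\,d\tau$, observe that $\dot U=\bJ\bL U+P_c(\omega)F$ with $U(0)=0$, and Duhamel against the free flow:
\[
U(t)=\int_0^t e^{(t-\tau)\bJ\bL_0}\big(\bJ\bW U(\tau)+P_c(\omega)F(\tau)\big)\,d\tau.
\]
By Lemma~\ref{lemma20} (in its $\p_x$-commuted form, using the $L^\infty_tL^2_x$ bound just proved to handle the commutator terms) both $U$ and $\p_xU$ are controlled in $\sup_x\langle x\rangle^{-3}\|\cdot\|_{L^2_t}$ by the right-hand side of \eqref{a910}; since $\bW$ and $\p_x\bW$ decay exponentially (Lemma~\ref{lemma10}), the factor $\langle x\rangle^3$ is absorbed and $\bJ\bW U,\ \p_x(\bJ\bW U)\in (L^1)_xL^2_t$ with the same bound. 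It then remains to estimate $\|\int_0^t e^{(t-\tau)\bJ\bL_0}G(\tau)\,d\tau\|_{L^4_tL^\infty_x}$ for $G$ with $G,\p_xG\in(L^1)_xL^2_t$, applied both to $G=\bJ\bW U$ and to $G=P_c(\omega)F$. I would write the full-time integral as $e^{t\bJ\bL_0}\int e^{-\tau\bJ\bL_0}G(\tau)\,d\tau$, use the homogeneous free estimate \eqref{a851} and then the $M$-weighted dual estimate \eqref{a805} of Lemma~\ref{lemma133}, and pass from $\int_{-\infty}^\infty$ to $\int_0^t$ by the Christ--Kiselev lemma (Lemma~\ref{kiselev}, legitimate since the relevant time exponents satisfy $2<4$); this leaves $\|M|\p_x|^{3/4}\langle\nabla\rangle^\epsilon G\|_{(L^1)_xL^2_t}$. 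Since $M=\sqrt{\langle\nabla\rangle/|\nabla|}$ is bounded at high frequency and only mildly smoothing ($\sim|\p_x|^{1/4}$) at low frequency, this is dominated by $\|G\|_{(L^1)_xL^2_t}+\|\p_xG\|_{(L^1)_xL^2_t}$, hence by the right-hand side of \eqref{a910}. The two delicate points are the derivative budget --- the $3/4+\epsilon$ loss of the free Strichartz estimate plus the high-frequency contribution of $M$ must fit inside the single $x$-derivative available on $G$, which works precisely because $M\sim 1$ for $|\xi|\gg1$ --- and making the Christ--Kiselev reduction legitimate when time sits as the inner variable on the input side, which is exactly why keeping $G=\bJ\bW U$ in the exponentially weighted, and hence genuinely $(L^1)_xL^2_t$, class is essential.
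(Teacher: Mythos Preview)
Your argument for \eqref{a970} is correct and is essentially the paper's argument unfolded: the paper tests against extremal data $F=\delta(t-\tau_0)G(x)$ and invokes Proposition~\ref{prop12}, while you integrate Minkowski-wise in $\tau$; these are equivalent.

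For \eqref{a910} your route is quite different from the paper's and considerably longer. The paper's proof is essentially three lines: apply Christ--Kiselev to replace $\int_0^t$ by $\int_{-\infty}^\infty$, factor $e^{t\bJ\bL}P_c(\omega)$ outside, and use the \emph{perturbed} homogeneous Strichartz estimates \eqref{a650}, \eqref{a660} from Lemma~\ref{lemma30} to reduce both the $L^\infty_tH^1_x$ and the $L^4_tL^\infty_x$ bounds simultaneously to $\|\int e^{-\tau\bJ\bL}P_c(\omega)F\,d\tau\|_{H^1_x}$; this last quantity is exactly the dual estimate of Proposition~\ref{prop12}, with the derivative handled by the algebraic identity $\bJ\bmupalpha\p_x=\bL-\bmupbeta+\omega-\bW$ rather than by commutators. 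You instead (i) treat $L^\infty_tL^2_x$ by duality through the adjoint semigroup $e^{t(\bJ\bL)^*}$ (which works, since $(\bJ\bL)^*=-\bL\bJ$ is conjugate to $-\bJ\bL$ by $\bJ$, so Proposition~\ref{prop12} transfers), and (ii) treat $L^4_tL^\infty_x$ by Duhamel against the \emph{free} flow, feeding in Lemma~\ref{lemma20} to localize $\bJ\bW U$, then the free estimates \eqref{a851}, \eqref{a805}. Step (ii) effectively re-derives the machinery already packaged in Lemma~\ref{lemma30}. Your approach does go through: the delicate multiplier bound $\|M|\p_x|^{3/4}\langle\nabla\rangle^\epsilon G\|_{L^1_x}\lesssim\|G\|_{W^{1,1}_x}$ holds because the symbol $\langle\xi\rangle^{1/2+\epsilon}|\xi|^{1/4}(1+i\xi)^{-1}$ has inverse Fourier transform in $L^1(\R)$ (it behaves like $|x|^{-5/4}$ at infinity from the $|\xi|^{1/4}$ cusp and like $|x|^{-3/4-\epsilon}$ near zero from the $|\xi|^{-1/4+\epsilon}$ tail), but you should say this rather than just assert it. What the paper's approach buys is brevity and the avoidance of any $L^1$ multiplier analysis; what yours buys is independence from Lemma~\ref{lemma30}, at the cost of reproducing its content.
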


\begin{proof}
For the proof of \eqref{a910}, by Lemma~\ref{kiselev},
we may consider the Duhamel's operator in the form $\int_{-\infty}^\infty \ldots$,
instead of the retarded term with $\int_0^t \ldots$ in the Duhamel representation.
By \eqref{a650} and \eqref{a660},
\begin{eqnarray*}
\Big\|\int_{-\infty}^\infty e^{(t-\tau)\bJ\bL} P_c(\omega)
F(\tau, \cdot)\,d\tau
\Big\|_{L^\infty_t H^1_x \cap L^4_t L^\infty_x}
&=&\Big\|
e^{t\bJ\bL}P_c(\omega)
\int_{-\infty}^\infty e^{-\tau \bJ\bL} F(\tau, \cdot)\,d\tau
\Big\|_{L^\infty_t H^1_x
\cap L^4_t L^\infty_x} \\
&\leq&
\Big\|
\int_{-\infty}^\infty e^{-\tau\bJ\bL} P_c(\omega) F(\tau, \cdot)\,d\tau
\Big\|_{H^1_x}.
\end{eqnarray*}
To prove \eqref{a910},
we need to estimate two terms:
one with a derivative and one without a derivative.
The term without a derivative
is dealt with by Proposition~\ref{prop12}:
\begin{equation}
\label{a950}
\Big\|
\int_{-\infty}^\infty e^{-\tau \bJ\bL}P_c(\omega)F(\tau, \cdot)\,d\tau
\Big\|_{L^2_x}
\leq C \| F\|_{ (L^1_3)_x L^2_t}.
\end{equation}
For the term
$\|\int_{-\infty}^\infty\p_x[e^{-\tau \bJ\bL}P_c(\omega)F(\tau,\cdot)]
d\tau\|_{L^2_x}$,
we are facing a difficulty since $\p_x e^{-\tau \bJ\bL}\neq e^{-\tau\bJ\bL} \p_x$. Nevertheless, due to the fact that
$
\bL
=\bD_m-\omega I_4+\bW,
$
we use the $L^2_x$ estimate \eqref{a950} to derive
\begin{eqnarray}
\label{aaa}
\Big\|\int\limits_{-\infty}^\infty \p_x
\big[e^{-\tau \bJ\bL} P_c(\omega) F(\tau, \cdot) \big]
d\tau
\Big\|_{L^2_x}
&\leq&
\Big\|\int\limits_{-\infty}^\infty
\big(
\bL-\bmupbeta+\omega I_4-\bW
\big)
\big[
e^{-\tau \bJ\bL} P_c(\omega) F(\tau, \cdot)
\big]
d\tau
\Big\|_{L^2_x}
\nonumber
\\
&\leq&
C
\Big\{
\|\bL F\|_{(L^1_3)_x L^2_t}
+
\big(1+\abs{\omega}+\|\bW\|_{L^\infty_x}\big)
\|F\|_{(L^1_3)_x L^2_t}
\Big\}.
\end{eqnarray}
Taking into account the specific form of $\bJ\bL$,
it follows from
\eqref{a950}
and
\eqref{aaa}
that
$$
\Big\|
\int_{-\infty}^\infty e^{-\tau \bJ\bL} g_1(\tau) P_c(\omega) g_2
d\tau
\Big\|_{H^1_x}\leq C
[\|F\|_{(L^1_3)_x  L^2_t}+\|\p_x F\|_{(L^1_3)_x L^2_t}].
$$
We now turn to proving \eqref{a970}
Because of the weak* density of linear combinations
$\{\delta(t-\tau_0)G(x): \tau_0\in\rone, G\in L^2_x(\rone)\}$ in $L^1_t L^2_x$,
it suffices to prove
\eqref{a970}
for $F(x)=\delta(t-\tau_0)G(x)$.
By Proposition~\ref{prop12},
\begin{eqnarray*}
\sup_x \langle x\rangle^{-3}
\Big\|
\int_0^t e^{(t-\tau)\bJ\bL} P_c(\omega)\delta(\tau-\tau_0) G(x)\,d\tau
\Big\|_{L^2_t}
&=&\sup_x \langle x\rangle^{-3} \| e^{(t-\tau_0)\bJ\bL} P_c(\omega) G(x) \|_{L^2_t}
\\
&=&\sup_x \langle x\rangle^{-3} \| e^{t \bJ\bL} P_c(\omega) G(x) \|_{L^2_t}
\leq C \|G\|_{L^2_x}.
\qedhere
\end{eqnarray*}
\end{proof}

\section{Proof of the Main Theorem}\label{sec:pmt}

In this section,
the constants $C$ may change from one instance to another;
they all depend only on $\varOmega$
and on the nonlinearity $f$ in \eqref{GN.eqn}.

\subsection{Modulation equations}\label{modul}
We consider the solution $\psi$ of equation \eqref{N.eqn.gn}
in the form
\begin{equation}\label{eqn.dec}
\psi(x, t) =
\big(\phi_{\omega(t)}(x) + \rho(x,t)\big)e^{-i\theta(t)}, \qquad
\text{with} \qquad \theta(t) = \int_0^t \omega(s)\,ds + \gamma(t),
\qquad x , t \in\R.
\end{equation}
Substituting this Ansatz into \eqref{N.eqn.gn}, we get
\begin{equation}\label{rho.eqn-2}
i\p_t \rho =
(D_m-I_2\omega -\dot{\gamma}I_2)\rho -\dot{\gamma} \phi -i\dot{\omega}\p\sb\omega\phi
+ \bN(\phi + \rho)-\bN(\phi),
\end{equation}
with $\bN$ defined in \eqref{def-n-gn}.
As in
\eqref{R.eqn.gn}, \eqref{def-upphi},
we use the notations
\[
R=\begin{bmatrix}\Re\rho\\\Im\rho\end{bmatrix},
\qquad
\bmupphi\sb\omega
=\begin{bmatrix}\Re\phi\sb\omega\\\Im\phi\sb\omega\end{bmatrix}
=\begin{bmatrix}\phi\sb\omega\\0\end{bmatrix}.
\]
Then equation \eqref{rho.eqn-2} takes the form
\begin{equation}\label{eqn:all}
\p_t R
=\bJ \bL R -\dot\gamma \bJ R
-\dot\gamma\bJ\bmupphi
-\dot\omega\p\sb\omega\bmupphi
+\bJ \bN_1,
\end{equation}
where
\begin{equation}\label{N1.def}
\bN_1(R, \omega) =
\begin{bmatrix}\Re \big(\bN(\phi +\rho)-\bN(\phi)\big)
\\\Im \big(\bN(\phi +\rho)-\bN(\phi)\big)\end{bmatrix}
- \bW R,
\end{equation}
with $\bW$ from \eqref{def-w.gn}.

\begin{remark}
Let us point out that since we take
the initial data of certain parity,
$\psi\at{t=0}\in X$,
then we also have
$\psi\in X$ for all $t\ge 0$,
so that $\rho\in X$;
therefore,  $R\in\bX$ and $\bJ\bN_1\in\bX$
(see Definitions~\ref{def-x},~\ref{def-xx}).
Moreover, the operators
$\bJ\bL(\omega)$, $P_d(\omega)$, and $P_c(\omega)$
act invariantly in $\bX$.
\end{remark}

We impose the requirement $R(t) \in \bX_c(\omega(t))$.
Together with the symplectic orthogonality condition
\eqref{symplectic-orth},
this requirement implies that
\begin{equation}\label{i-t}
\langle\bmupphi, R\rangle=
\langle
\bJ\p\sb\omega\bmupphi, R\rangle=0.
\end{equation}

Taking the time derivative of the relations \eqref{i-t},
we get
\begin{equation}
\label{dot-R-orth}
\langle\bmupphi, \dot{R} \rangle=
-\dot{\omega} \langle \partial_\omega \bmupphi, R \rangle = -\dot\omega \Re \langle
\varphi, \rho \rangle,
\qquad
\langle\bJ\p\sb\omega\bmupphi, \dot{R}\rangle=
-\dot{\omega} \langle\bJ\p\sb\omega^2\bmupphi, R\rangle =
\dot \omega \Im \langle \partial_\omega \varphi, \rho \rangle,
\end{equation}
where
\[
\varphi_\omega = \partial_\omega \phi_\omega.
\]
Coupling \eqref{eqn:all}
with $\bmupphi$ and with $\bJ\p\sb\omega\bmupphi$
and using the symplectic relations \eqref{symplectic-orth}
and the relations \eqref{dot-R-orth}, we obtain
\begin{equation}\label{para-eqns}
\mathcal{A}(t)
\begin{bmatrix} \dot{\omega}\\ \dot{\gamma}\end{bmatrix}
=
\begin{bmatrix}
 \wei{\bmupphi, \bJ \bN_1}\\
 \wei{\bJ \partial_\omega \bmupphi, \bN_1}
\end{bmatrix},
\end{equation}
where
\begin{equation}\label{A-B.def}
\mathcal{A}(t)
=
\begin{bmatrix}
 \wei{\bmupphi, \partial_\omega \bmupphi} - \wei{\partial_\omega \bmupphi, R} & \wei{\bmupphi, \bJ R} \\
-\wei{\bJ \p_\omega^2 \bmupphi, R} & \wei{\bJ \p_\omega \bmupphi, \bJ \bmupphi}
+\wei{\bJ \p_\omega \bmupphi, \bJ R}
\end{bmatrix},
\end{equation}
where
$\omega$ and $R$ are evaluated at the moment $t$.

Define
\begin{equation}\label{def-upmu}
\upmu(x)
:=
e^{-\updelta_\varOmega\langle x\rangle/(4k)},
\qquad
\updelta_\varOmega:=\inf\sb{\omega\in\varOmega}
\sqrt{1-\omega^2}>0;
\end{equation}
by \eqref{phi-bounded.gn} and \eqref{phi-bounded.gn-2},
there is $C<\infty$
such that for any $\omega\in\varOmega$, 
\begin{equation}\label{upmu-small}
\abs{\phi\sb\omega(x)}
+\abs{\p\sb\omega\phi\sb\omega(x)}
+\abs{\p\sb\omega^2\phi\sb\omega(x)}
\le C\upmu(x)^{2k},
\qquad
x\in\R,\quad\omega\in\varOmega.
\end{equation}

\begin{lemma}\label{lemma-a-invertible}
There is $\epsilon_0>0$ such that
if $\langle\upmu,\abs{R(t)}\rangle<\epsilon_0$,
then
\[
\norm{\mathcal{A}(t)^{-1}}
<
2\Big(
\inf\sb{\omega\in\varOmega}
\langle\phi\sb\omega,\p\sb\omega\phi\sb\omega\rangle
\Big)^{-1}<\infty.
\]
\end{lemma}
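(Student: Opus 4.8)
The statement is essentially a quantitative continuity/invertibility claim about the $2\times 2$ matrix $\mathcal{A}(t)$ from \eqref{A-B.def}, which I will establish by showing that $\mathcal{A}(t)$ is a small perturbation of an explicit diagonal matrix whose entries are bounded below. First I would compute $\mathcal{A}(t)$ at $R=0$. By \eqref{def-upphi} one has $\bmupphi=\begin{bmatrix}\phi\\0\end{bmatrix}$ and $\bJ\bmupphi=\begin{bmatrix}0\\-\phi\end{bmatrix}$, so $\wei{\bmupphi,\bJ\bmupphi}=0$, while the diagonal entries are $\wei{\bmupphi,\p\sb\omega\bmupphi}=\wei{\phi\sb\omega,\p\sb\omega\phi\sb\omega}$ (real, since $\phi\sb\omega$ is real-valued by Proposition~\ref{Solitons}) and $\wei{\bJ\p\sb\omega\bmupphi,\bJ\bmupphi}=\wei{\p\sb\omega\bmupphi,\bmupphi}=\wei{\phi\sb\omega,\p\sb\omega\phi\sb\omega}$ as well (using $\bJ^{\ast}\bJ=I$). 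Note also that $\wei{\phi\sb\omega,\p\sb\omega\phi\sb\omega}=\tfrac12\p\sb\omega\norm{\phi\sb\omega}_{L^2}^2=\tfrac12 Q'(\omega)$, which is nonzero on $\varOmega$ by Assumption~\ref{ass-1}, so the infimum $\inf_{\omega\in\varOmega}\wei{\phi\sb\omega,\p\sb\omega\phi\sb\omega}$ appearing in the statement is a positive constant; call it $2c_0>0$. Thus $\mathcal{A}\at{R=0}=c_0'\,I_2$ (up to the above identification) with $c_0'\ge 2c_0$, and in particular $\norm{(\mathcal{A}\at{R=0})^{-1}}\le (2c_0)^{-1}$.

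Next I would estimate the $R$-dependent remainder $\mathcal{A}(t)-\mathcal{A}\at{R=0}$. Each such entry is of the form $\wei{\Psi\sb\omega, R}$ or $\wei{\bJ\Psi\sb\omega, R}$, where $\Psi\sb\omega$ is one of $\p\sb\omega\bmupphi$, $\bmupphi$, $\bJ\p\sb\omega^2\bmupphi$, $\bJ\p\sb\omega\bmupphi$ — i.e.\ a vector built from $\phi\sb\omega$, $\p\sb\omega\phi\sb\omega$, $\p\sb\omega^2\phi\sb\omega$. By \eqref{upmu-small}, each component of $\Psi\sb\omega$ is bounded pointwise by $C\upmu(x)^{2k}\le C\upmu(x)$ (since $\upmu\le 1$ and $2k\ge 4$), uniformly in $\omega\in\varOmega$. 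Hence
\[
\abs{\wei{\Psi\sb\omega,R}}\le C\int_{\R}\upmu(x)\,\abs{R(x)}\,dx=C\,\wei{\upmu,\abs{R}},
\]
and likewise for the $\bJ\Psi\sb\omega$ entries since $\bJ$ is an isometry. Therefore $\norm{\mathcal{A}(t)-\mathcal{A}\at{R=0}}\le C_1\wei{\upmu,\abs{R(t)}}$ for a constant $C_1$ depending only on $\varOmega$ and $f$.

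Finally I would invoke the standard Neumann-series perturbation bound for inverses: if $\norm{B-A}\le \tfrac12\norm{A^{-1}}^{-1}$ then $B$ is invertible with $\norm{B^{-1}}\le 2\norm{A^{-1}}$. Applying this with $A=\mathcal{A}\at{R=0}$, $B=\mathcal{A}(t)$ and using $\norm{A^{-1}}\le (2c_0)^{-1}$, it suffices that $C_1\wei{\upmu,\abs{R(t)}}\le \tfrac12 (2c_0)= c_0$, i.e.\ it suffices to take $\epsilon_0:=c_0/C_1>0$. Then for $\wei{\upmu,\abs{R(t)}}<\epsilon_0$ we get $\mathcal{A}(t)$ invertible with $\norm{\mathcal{A}(t)^{-1}}\le 2\norm{A^{-1}}\le (2c_0)^{-1}\cdot 2 = 2\big(\inf_{\omega\in\varOmega}\wei{\phi\sb\omega,\p\sb\omega\phi\sb\omega}\big)^{-1}$, which is the claimed bound. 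The only mildly delicate point — and the place I would be careful — is checking that $\mathcal{A}\at{R=0}$ is genuinely scalar (that the off-diagonal entries vanish at $R=0$ and the two diagonal entries coincide), which relies on $\phi\sb\omega$ being real-valued and on the algebraic identities $\bJ^\ast\bJ=I$, $\wei{\bJ u,v}=-\wei{u,\bJ v}$; everything else is routine. I would also note explicitly that $\varOmega$ being an open interval does not by itself guarantee $\inf_{\omega\in\varOmega}\wei{\phi\sb\omega,\p\sb\omega\phi\sb\omega}>0$, but this follows since the map $\omega\mapsto\phi\sb\omega$ is $C^2$ into $H^1$ and $Q'(\omega)\ne 0$ throughout, together with the fact that in the application $\omega(t)$ stays in a fixed compact subinterval (so one may shrink $\varOmega$ to such a subinterval if needed).
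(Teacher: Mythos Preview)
Your proposal is correct and follows essentially the same approach as the paper: identify $\mathcal{A}(t)$ at $R=0$ as the scalar matrix $\wei{\phi\sb\omega,\p\sb\omega\phi\sb\omega}\,I_2$, bound the $R$-dependent remainder by $C\wei{\upmu,\abs{R}}$ using the exponential decay estimates \eqref{upmu-small}, and invoke a Neumann-series perturbation argument. Your version is somewhat more detailed than the paper's (which simply writes $\mathcal{A}(t)=\wei{\phi\sb\omega,\p\sb\omega\phi\sb\omega}I_2+\mathcal{O}(\wei{\upmu^{2k},\abs{R}})$ and concludes), and your closing remark about the positivity of the infimum over the open interval $\varOmega$ is a fair caveat that the paper glosses over.
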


\begin{proof}
From \eqref{A-B.def}
and \eqref{upmu-small},
we have
\[
\mathcal{A}(t)= \begin{bmatrix}
\langle\phi\sb\omega,\p\sb\omega\phi\sb\omega\rangle
&0 \\
0&
\langle\phi\sb\omega,\p\sb\omega\phi\sb\omega\rangle
\end{bmatrix}
+\mathcal{O}\big(\langle\upmu^{2k},\abs{R}\rangle\big),
\]
where $\omega=\omega(t)$;
we took into account the bounds
\eqref{phi-bounded.gn}
and \eqref{phi-bounded.gn-2}. 
By Assumption~\ref{ass-1},
one has
$2\langle\phi_\omega,\p\sb\omega\phi\sb\omega\rangle= Q'(\omega)$
with $\inf\sb{\omega\in\varOmega}\abs{Q'(\omega)}>0$;
therefore,
one can choose $\epsilon_0>0$
so small
that $\mathcal{A}(t)$ is invertible
and satisfies the conclusion of the lemma.
\end{proof}

To control $\rho$ (or equivalently $R$), let us define
\begin{equation}\label{def-zt}
Z(t) = P_c(\omega_0)R(t),
\end{equation}
so that
\begin{equation}\label{def-z0}
Z(0) = P_c(\omega_0) R(0) = P_c(\omega_0) \begin{bmatrix} \Re \big( \Psi_0-\phi_0 \big) \\ \Im\big( \Psi_0-\phi_0 \big) \end{bmatrix}.
\end{equation}
Since
$Z=P_c(\omega_0) R$
and
$R=P_c(\omega) R$, and by \eqref{def-pd}, 
we have
\begin{equation}\label{Z_R}
Z-R
=P_c(\omega_0)R-P_c(\omega)R
=\big(
P_d(\omega)-P_d(\omega_0)
\big)R = \mathcal{O}(\omega - \omega_0) \wei{\upmu^{2k}, |R|} \upmu^{2k}.
\end{equation}
Therefore, if $|\omega -\omega_0|$ is sufficiently small, to control $R$, it suffices to control $Z$;
in particular, it follows from \eqref{Z_R} that
if either $Z$ or $R$ is from $H^1$ in $x$,
then so is the other function,
and moreover
\begin{equation}\label{r-z-same}
\norm{Z-R}\sb{H^1_x}
\le C\abs{\omega-\omega_0}
\langle\upmu^{2k},\abs{R}\rangle,
\end{equation}
with some constant $C<\infty$
which depends only on $\varOmega$
and on the nonlinearity $f$ in \eqref{GN.eqn}.
The weight
$
\upmu(x)^{2k}=e^{-\updelta_\varOmega\langle x\rangle/2}
$
(Cf. \eqref{def-upmu})
comes from the bounds \eqref{upmu-small}
on the eigenfunctions that span
the generalized null space
\eqref{n-ng} of the operator $\bJ\bL(\omega)$
and from the explicit form \eqref{def-pd}
of the projector $P_d(\omega)$.

Let us estimate
the right-hand side in \eqref{para-eqns}.

\subsection{Closing the estimates}
\label{sect-closing}

Now we will analyze
the modulation equations \eqref{para-eqns}
and the PDE \eqref{Z-eqn-new}.
We will assume that $\epsilon>0$ is sufficiently small
and that
\[
\psi_0 e^{i\theta_0} = \phi_{\omega_0} + \rho_0,
\qquad \rho_0 \in \X_c(\omega_0),
\qquad \theta_0 \in \mathbb{R},
\qquad
\|\rho_0\|_{H^1} \leq \epsilon^2.
\]
Without loss of generality, we assume that $\theta_0 =0$.

\begin{definition}\label{def-x-y}
For fixed $N > 10$ and $T>0$, let
\[
\begin{split}
\norm{Z}_{\mathcal{X}_T}&=\norm{Z}_{L^4_t L^\infty_x} + \norm{Z}_{L_t^\infty H^{1}_x}
+ \norm{\wei{x}^{-N}Z}_{L^\infty_x L^2_t} + \norm{\wei{x}^{-N}\p_x Z}_{L^\infty_x L^2_t}, \\
\norm{F}_{\mathcal{Y}_T}&=
\inf_{F = A + B} \Big[\norm{A}_{L^1_t H^1_x} +\norm{\wei{x}^{N}B}_{L_x^1L_t^2}
+ \norm{\wei{x}^{N} \p_x B}_{L_x^1L_t^2} \Big],
\end{split}
\]
where $L_t^\alpha = L^\alpha[0,T]$ and $L^\alpha_{x} = L^\alpha(\mathbb{R})$.
\end{definition}

\begin{lemma}\label{lemma-sim-nn1}
There is $C<\infty$
such that for each $\omega_0 \in\varOmega$
there is
$\epsilon_0\in\big(0,\mathop{\rm dist}(\omega_0,\p\varOmega)\big)$
such that if
$\omega$ and $Z\in H^1(\R,\C^4)$ satisfy
$
|\omega-\omega_0| < \epsilon_0,
$
$\norm{\langle x\rangle^{-N} Z}\sb{H^1_x}\le\epsilon_0$
with $N>10$ from Definition~\ref{def-x-y},
then
\[
|\wei{\bmupphi, \bJ \bN_1(R,\omega)}|
+|\wei{\bJ \p_\omega \bmupphi, \bN_1(R,\omega)}|
\leq C
\langle\upmu,\abs{Z}^2\rangle,
\]
where
$\bN_1(R,\omega)$ is from \eqref{N1.def},
$R=P_c(\omega_0)R$,
and $Z=P_c(\omega_0)R$.
\end{lemma}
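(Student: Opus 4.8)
The plan is to unwind $\bN_1$ by Taylor's theorem, pair the result against the exponentially localized functions $\bmupphi$ and $\bJ\p_\omega\bmupphi$ (whose decay is recorded in \eqref{upmu-small}), and then trade $R$ for $Z$ via \eqref{Z_R}. By \eqref{N1.def} and \eqref{def-w.gn}, $\bN_1(R,\omega)$ is exactly the part of order $\ge 2$ in $R$ of the map $R\mapsto\big[\Re(\bN(\phi+\rho)-\bN(\phi));\Im(\bN(\phi+\rho)-\bN(\phi))\big]$, since $\bN(\phi)-\bN(\phi)=0$ and $\bW R$ is its linear part. Writing $\psi_1=v+\rho_1$, $\psi_2=u+\rho_2$ and
\[
\delta s:=|\psi_1|^2-|\psi_2|^2-(v^2-u^2)=\big(2v\Re\rho_1-2u\Re\rho_2\big)+\big(|\rho_1|^2-|\rho_2|^2\big)=:\delta s^{(1)}+\delta s^{(2)},
\]
and Taylor-expanding the smooth function $f$ around $s_0:=v^2-u^2$, one obtains a finite decomposition $\bN_1(R,\omega)=\sum_i c_i(x,\omega)\,m_i(R)$, where each $m_i$ is a monomial of degree $d_i\ge 2$ in the four real components of $R$, each $c_i$ is a product of some $f^{(j)}(s_0+\theta_i\delta s)$, $\theta_i\in[0,1]$, with powers of $v,u$, and — the crucial structural point — every term with $d_i=2$ carries a genuine factor of $v$ or $u$ (the degree-$2$ contributions being $f'(s_0)\delta s^{(2)}\phi$, $\tfrac12 f''(s_0)(\delta s^{(1)})^2\phi$ and $f'(s_0)\delta s^{(1)}\rho$), while every term with no $v,u$ factor has $d_i\ge 3$.

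Since $v^2-u^2$ stays in a fixed compact subset of $[0,1)$ along the solitary wave, and since, by $\|\wei{x}^{-N}Z\|_{H^1}\le\epsilon_0$, Sobolev embedding and \eqref{Z_R} (a one-line bootstrap disposes of the factor $\wei{\upmu^{2k},|R|}$ on the right of \eqref{Z_R}), one has $|R(x)|\le C\epsilon_0\wei{x}^{N}$, it follows, after replacing $f$ outside $[-1,1]$ by a function with polynomially bounded derivatives (which changes neither the solitary waves nor $\bW$), that $|c_i(x,\omega)|\le C\wei{x}^{C'N}$ for all $i$, and $|c_i(x,\omega)|\le C\wei{x}^{C'N}\upmu(x)^{2k}$ by \eqref{upmu-small} whenever the term carries a $v$ or $u$. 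Then, using \eqref{upmu-small} for $|\bmupphi|$ and $|\p_\omega\bmupphi|$,
\[
|\wei{\bmupphi,\bJ\bN_1}|+|\wei{\bJ\p_\omega\bmupphi,\bN_1}|\le C\int_\R\upmu^{2k}|\bN_1(R,\omega)|\,dx\le C\sum_i\int_\R\upmu^{2k}|c_i|\,|R|^{d_i}\,dx.
\]
For a term with a $v,u$ factor ($d_i\ge 2$) I would write $\upmu^{2k}|c_i||R|^{d_i}\le C\,\upmu\cdot\big(\upmu^{4k-1}\wei{x}^{C''N}\big)\cdot|R|^2$; since $\upmu^{4k-1}=e^{-c\wei{x}}$ with $c>0$ dominates any fixed power of $\wei{x}$, this is $\le C\upmu|R|^2$. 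For a purely-$\rho$ term ($d_i\ge 3$) I would write $\upmu^{2k}|c_i||R|^{d_i}\le C\,\upmu\cdot\big(\upmu^{2k-1}\wei{x}^{C''N}\big)\cdot|R|^2$; here $k\ge 2$ gives $2k-1\ge 3$, so $\upmu^{2k-1}$ again kills the polynomial and, as $d_i-2\ge1$, leaves a spare $\epsilon_0$, whence $\le C\epsilon_0\upmu|R|^2$. Summing the finitely many terms, $\int\upmu^{2k}|\bN_1|\le C\int\upmu|R|^2$.

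It remains to pass from $R$ to $Z$: by \eqref{Z_R} and Cauchy--Schwarz,
\[
\int\upmu|R-Z|^2\,dx\le C|\omega-\omega_0|^2\wei{\upmu^{2k},1}\wei{\upmu^{2k},|R|^2}\le C\epsilon_0^2\int\upmu|R|^2\,dx,
\]
so $\int\upmu|R|^2\le 2\int\upmu|Z|^2+C\epsilon_0^2\int\upmu|R|^2$, and for $\epsilon_0$ small $\int\upmu|R|^2\le C\int\upmu|Z|^2=C\wei{\upmu,|Z|^2}$; combining with the previous display gives the claim. The step I expect to be the main obstacle is the structural part of paragraph one: checking that every degree-$2$ contribution to $\bN_1$ genuinely carries a factor of $\phi_\omega$ — this is what produces a weight $\upmu^{2k}$ strong enough to swallow the $\upmu^{2k}$ from the test functions and still beat the at-most-polynomial growth of $Z$ — and organizing the purely-$\rho$ higher-degree terms so that their slowly varying coefficients, where $f$ is evaluated on a set of size $\sim\wei{x}^{2N}$, are under control, which is precisely where one uses that $f$ may be taken polynomially bounded and that $k\ge2$ leaves a spare $\epsilon_0$. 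Everything after that is the routine weighted-$L^2$ bookkeeping above.
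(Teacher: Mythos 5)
Your proof is essentially correct, and it follows the same overall strategy as the paper's: Taylor-expand $\bN_1$, exploit the exponential localization of the test functions $\bmupphi$ and $\bJ\partial_\omega\bmupphi$ via \eqref{upmu-small} to absorb the polynomial growth of $R$, and then replace $R$ by $Z$ via \eqref{Z_R}, with the weighted-$L^2$ comparison $\tfrac12\langle\upmu,|Z|^2\rangle\le\langle\upmu,|R|^2\rangle\le 2\langle\upmu,|Z|^2\rangle$ for $\epsilon_0$ small. The difference lies in how the two arguments package the Taylor expansion. The paper invokes the single estimate $\bN_1=\mathcal{O}\big(|\phi|^{2k-1}|R|^2+|R|^{2k+1}\big)$ (their \eqref{N1-est}), which is obtained from Taylor's theorem together with Young's inequality and uses the \emph{full} strength of $f(s)=\mathcal{O}(s^k)$ (so $f'(s_0)=\mathcal{O}(|\phi|^{2k-2})$, etc.); it then reads $\langle\phi,|\bN_1|\rangle\le C\langle\upmu,|R|^2\rangle\big(1+\|\upmu R\|_{L^\infty}^{2k-1}\big)$. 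You instead decompose $\bN_1$ into monomials, keep track only of the coarse structural fact that every degree-two term carries a single factor of $v$ or $u$ (not the extra $|\phi|^{2k-2}$ decay from $f^{(j)}(s_0)$), and let the $\upmu^{2k}$ weight from $\bmupphi$, $\bJ\partial_\omega\bmupphi$ do the rest. That is a legitimate shortcut for this particular lemma because the inner products already provide exponential localization; but note that it is strictly weaker than the paper's \eqref{N1-est}, which is reused later (in \eqref{Omega.est}--\eqref{2k+1}) where no test function is available and the full $|\phi|^{2k-1}$ and $|R|^{2k+1}$ powers, together with $k\ge 2$, are actually needed. Two small points of care in your write-up: the ``finite decomposition into monomials $c_i m_i(R)$'' is literally finite only when $f$ is a polynomial (as in the $k=2,3$ Gross--Neveu models); for a general $f\in C^\infty$ one should do a finite-order Taylor expansion with an explicit remainder, which your bounds handle in the same way. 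And the modification of $f$ away from $[-1,1]$ is harmless for the \emph{inequality} you are proving but should be stated as a reduction (prove the estimate for a polynomially bounded modification, then note the conclusion transfers because the inner products and the $\upmu$-weight only see the localized region); as written this reads as if you were altering the equation.
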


\begin{proof}
    From \eqref{N1.def},
Taylor's expansion, and Young's inequality,
we see that
\begin{equation}\label{N1-est}
\bN_1 = \bN(\phi + \rho)-\bN(\phi)
-\bW R
=\mathcal{O}\big(\abs{\phi}^{2k-1}|R|^2+|R|^{2k+1}\big).
\end{equation}
Note that the above makes sense pointwise in $x\in\R$
since
$Z\in H^1(\R,\C^4)$,
and by \eqref{r-z-same} so is $R$.

By \eqref{upmu-small},
this leads to
\begin{equation}\label{l-t}
\langle\phi,\abs{\bN_1}\rangle
\le
C
\langle\upmu,\abs{R}^2\rangle
\big(1+\norm{\upmu R}_{L^\infty_x}^{2k-1})
\big)
\le
C
\langle\upmu,\abs{Z}^2\rangle
\big(1+\norm{\upmu Z}_{H^1_x}^{2k-1})
\big).
\end{equation}
Let us explain the last inequality.
By \eqref{Z_R} and the triangle inequality,
\[
-\abs{(P_d(\omega)-P_d(\omega_0))R}
\le
\abs{Z}-\abs{R}\le \abs{(P_d(\omega)-P_d(\omega_0))R},
\qquad
x\in\R;
\]
multiplying the above by $\abs{R}+\abs{Z}$
and
coupling the result with $\upmu$, we have
\[
-C\abs{\omega-\omega_0}\langle\upmu,\abs{R}^2+\abs{Z}^2\rangle
\le
\langle\upmu,\abs{Z}^2\rangle-\langle\upmu,\abs{R}^2\rangle
\le
C\abs{\omega-\omega_0}\langle\upmu,\abs{R}^2+\abs{Z}^2\rangle.
\]
It follows that if $\abs{\omega-\omega_0}$ is
sufficiently small, then
\[
\frac 1 2\langle\upmu,\abs{Z}^2\rangle
\le
\langle\upmu,\abs{R}^2\rangle
\le
2\langle\upmu,\abs{Z}^2\rangle.
\]
Since $\norm{\langle x\rangle^{-N}Z}_{H^1_x}\le\epsilon_0$,
we have
$\norm{\upmu Z}_{H^1_x}
\le C$;
therefore, the inequality \eqref{l-t}
finishes the proof.
\end{proof}

Applying the projection $P_c(\omega_0)$ to equation \eqref{eqn:all}, we obtain:
\begin{eqnarray}\label{Z-eqn}
&&
\p_t Z-\bJ\bL(\omega_0)Z
+
\big(
\dot{\gamma}(t) + \omega(t)-\omega_0
\big)
P_c(\omega_0)\bJ Z
\nonumber
\\
&&
= P_c(\omega_0)
\big(
\bJ (\bW(\omega)-\bW(\omega_0)) R
-\dot{\gamma} \bJ\bmupphi
-\dot{\omega}\p\sb\omega\bmupphi_\omega+\bJ \bN_1
\big).
\end{eqnarray}
We denote
\begin{equation}\label{def-alpha}
\alpha(t) =\dot{\gamma}(t) + \omega(t)-\omega_0
\end{equation}
and
\begin{equation}\label{F0.def}
F_0(t)
=
\bJ \big(\bW(\omega)-\bW(\omega_0)\big) R
-\dot{\gamma}\bJ\bmupphi
-\dot{\omega}\p\sb\omega\bmupphi_\omega+\bJ \bN_1;
\end{equation}
then \eqref{Z-eqn} takes the form
\begin{equation}\label{Z-eqn-new}
\p_t Z-\bJ\bL(\omega_0)Z
+\alpha(t)\bJ Z
=P_c(\omega_0)F_0+\alpha(t)[\bJ,P_c(\omega_0)]Z.
\end{equation}
We assume that there exist $T>0$ and $C_0>1$
depending on $\omega_0$ such that the solution
$(\omega(t), \gamma(t), Z(t))$
to the modulation equations \eqref{para-eqns}
and the PDE \eqref{Z-eqn-new}
exists on $[0, T]$ and
\begin{equation}\label{assum-close}
\|\dot{\omega}\|_{L^1[0,T]} + \| \dot{\gamma}\|_{L^1[0,T]} \leq C_0 \epsilon, \qquad
\norm{Z}_{\mathcal{X}_T} \leq C_0 \epsilon.
\end{equation}
\begin{lemma}\label{lemma-closing} Assume that \eqref{assum-close} holds. 
If $\epsilon>0$
is sufficiently small,
then the estimates \eqref{assum-close} can be improved
as follows:
\begin{equation}\label{closing-a}
\|\dot{\omega}\|_{L^1[0,T]} + \| \dot{\gamma}\|_{L^1[0,T]} \leq \epsilon,
\qquad \norm{Z}_{\mathcal{X}_T} \leq \epsilon.
\end{equation}
\end{lemma}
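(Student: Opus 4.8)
This is the bootstrap step: the plan is to feed the a priori bounds \eqref{assum-close} into the modulation system \eqref{para-eqns} and into the Duhamel form of \eqref{Z-eqn-new}, and in each case to recover a bound that is \emph{quadratic or higher} in $C_0\epsilon$ (plus a $C\epsilon^2$ contribution from the initial data); choosing $\epsilon$ small depending on $C_0$ (hence on $\omega_0$) then yields \eqref{closing-a}. Throughout, $R$ and $Z=P_c(\omega_0)R$ are interchangeable at the cost of a factor $|\omega-\omega_0|$: by \eqref{r-z-same} one has $\norm{R-Z}_{H^1_x}\le C|\omega-\omega_0|\langle\upmu^{2k},|R|\rangle$, and the same estimate holds for the other norms entering $\mathcal X_T$ because $P_d(\omega)-P_d(\omega_0)$ has exponentially localized range (Cf. \eqref{Z_R}, \eqref{def-pd}). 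Also, $\upmu(x)=e^{-\updelta_\varOmega\langle x\rangle/(4k)}$ decays exponentially, so $\int_\R\upmu(x)\langle x\rangle^{2N}\,dx<\infty$ and $\langle x\rangle^N\upmu(x)^{2k}\le C\langle x\rangle^{-N}$.

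\emph{Step 1: modulation parameters.} The hypothesis of Lemma~\ref{lemma-a-invertible} holds since $\langle\upmu,|R(t)|\rangle\le C\norm{R(t)}_{L^2_x}\le C\norm{Z}_{L^\infty_tH^1_x}\le CC_0\epsilon$ (using $\upmu\le 1$, $\upmu\in L^2$, and $|\omega(t)-\omega_0|\le\norm{\dot\omega}_{L^1}\le C_0\epsilon$), so $\mathcal A(t)^{-1}$ is uniformly bounded; combining \eqref{para-eqns} with Lemma~\ref{lemma-sim-nn1} gives $|\dot\omega(t)|+|\dot\gamma(t)|\le C\langle\upmu,|Z(t)|^2\rangle$ pointwise in $t$. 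Then
\[
\int_0^T\langle\upmu,|Z(t)|^2\rangle\,dt=\int_\R\upmu(x)\norm{Z(x,\cdot)}_{L^2_t}^2\,dx\le\Big(\int_\R\upmu(x)\langle x\rangle^{2N}\,dx\Big)\norm{\langle x\rangle^{-N}Z}_{L^\infty_xL^2_t}^2\le C\norm{Z}_{\mathcal X_T}^2,
\]
so $\norm{\dot\omega}_{L^1[0,T]}+\norm{\dot\gamma}_{L^1[0,T]}\le C\norm{Z}_{\mathcal X_T}^2\le C(C_0\epsilon)^2$, which is $\le\epsilon$ for $\epsilon$ small. Using in addition $\sup_t\langle\upmu,|Z(t)|^2\rangle\le C\norm{Z}_{L^\infty_tH^1_x}^2$, I also get the quadratic bounds $\norm{\dot\omega}_{L^2[0,T]}+\norm{\dot\gamma}_{L^2[0,T]}\le C\norm{Z}_{\mathcal X_T}^2$ and $\norm{\dot\gamma}_{L^\infty[0,T]}\le C(C_0\epsilon)^2$, which feed Step 2.

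\emph{Step 2: the radiation $Z$.} Write $\alpha(t)=\dot\gamma(t)+(\omega(t)-\omega_0)$. I would keep the \emph{nonlocalized} piece $(\omega(t)-\omega_0)\bJ Z$ inside the linear operator, using the propagator $U(t,s)$ of $\bJ\bL(\omega_0)-(\omega(\cdot)-\omega_0)\bJ$. Since $\omega(t)-\omega_0$ is a scalar with $\norm{\omega(\cdot)-\omega_0}_{L^\infty[0,T]}\le C_0\epsilon$, and $[\bJ,\bJ\bL(\omega_0)]=\bJ[\bJ,\bW(\omega_0)]$ is exponentially localized (here carrying the small factor $\omega(t)-\omega_0$) because $\bJ$ commutes with $\bD_m$, the propagator $U(t,s)$ coincides with $e^{(t-s)\bJ\bL(\omega_0)}$ composed with the scalar rotation $e^{-\big(\int_s^t(\omega(r)-\omega_0)\,dr\big)\bJ}$ up to a small exponentially localized correction; hence $U(t,s)P_c(\omega_0)$ obeys all the estimates of Section~\ref{disp-section} with constants uniform over such $\omega(\cdot)$ (the rotation has modulus $1$, so it does not affect the moduli entering the $\mathcal X_T$-norm). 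This reduces matters to $\norm{\int_0^t U(t,\tau)P_c(\omega_0)F(\tau)\,d\tau}_{\mathcal X_T}\le C\norm{F}_{\mathcal Y_T}$ — provided by Proposition~\ref{prop12} and Lemmas~\ref{lemma20},~\ref{lemma30},~\ref{lemma50} (here $N>10>3$, so $\langle x\rangle^{-N}\le\langle x\rangle^{-3}$ makes the weighted estimates of Proposition~\ref{prop12} applicable) — together with $\norm{e^{t\bJ\bL(\omega_0)}P_c(\omega_0)Z(0)}_{\mathcal X_T}\le C\norm{Z(0)}_{H^1}\le C\norm{\rho_0}_{H^1}\le C\epsilon^2$, where $F=\dot\gamma[\bJ,P_c(\omega_0)]Z-\dot\gamma\bJ Z+(\omega-\omega_0)[\bJ,P_c(\omega_0)]Z+P_c(\omega_0)F_0$. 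Now I estimate $\norm{F}_{\mathcal Y_T}$. The exponentially localized pieces go into the weighted part of $\mathcal Y_T$: $\dot\gamma\bJ\bmupphi$ and $\dot\omega\p_\omega\bmupphi$ (and their $\p_x$), via $\langle x\rangle^N\upmu^{2k}\le C\langle x\rangle^{-N}$, contribute $\le C(\norm{\dot\gamma}_{L^2_t}+\norm{\dot\omega}_{L^2_t})\le C(C_0\epsilon)^2$; the term $\bJ(\bW(\omega)-\bW(\omega_0))R$, using $\norm{\bW(\omega)-\bW(\omega_0)}_{\C^4\to\C^4}\le C|\omega-\omega_0|\upmu(x)^{2k}$ (Proposition~\ref{Solitons}, \eqref{upmu-small}) and the weighted $L^\infty_xL^2_t$-control of $R,\p_xR$, contributes $\le C|\omega-\omega_0|\norm{Z}_{\mathcal X_T}\le C(C_0\epsilon)^2$; and $[\bJ,P_c(\omega_0)]Z=P_d(\omega_0)\bJ Z=\mathcal O(\langle\upmu^{2k},|Z|\rangle)\upmu^{2k}$ gives commutator contributions $\le C(\norm{\dot\gamma}_{L^\infty_t}+\norm{\omega-\omega_0}_{L^\infty_t})\norm{\langle\upmu^{2k},|Z|\rangle}_{L^2_t}\le CC_0\epsilon\,\norm{Z}_{\mathcal X_T}\le C(C_0\epsilon)^2$. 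In $\bN_1=\mathcal O(|\phi|^{2k-1}|R|^2+|R|^{2k+1})$ (Cf. \eqref{N1-est}) the first summand is exponentially localized and bounded in the weighted part by $C\norm{R}_{L^4_tL^\infty_x}^2+C\norm{R}_{L^\infty_tL^\infty_x}\norm{\langle x\rangle^{-N}\p_xR}_{L^\infty_xL^2_t}\le C\norm{Z}_{\mathcal X_T}^2$; the genuinely nonlocal summand $|R|^{2k+1}$ goes into the $L^1_tH^1_x$ part, where, interpolating $\norm{R(\cdot,t)}_{L^{2(2k+1)}_x}\le\norm{R(\cdot,t)}_{L^2_x}^{1/(2k+1)}\norm{R(\cdot,t)}_{L^\infty_x}^{2k/(2k+1)}$ and using $\norm{R}_{L^\infty_x}\le C\norm{R}_{H^1_x}$,
\[
\norm{|R|^{2k+1}}_{L^1_tH^1_x}\le C\norm{R}_{L^\infty_tH^1_x}^{2k-3}\norm{R}_{L^4_tL^\infty_x}^{4}\le C\norm{Z}_{\mathcal X_T}^{2k+1};
\]
this is where $k\ge 2$ enters, making the exponent $4$ available in $\int_0^T\norm{R}_{L^\infty_x}^{2k}\,dt\le\norm{R}_{L^\infty_tL^\infty_x}^{2k-4}\norm{R}_{L^4_tL^\infty_x}^{4}$ (for $k=1$ this bound carries a factor $T^{1/2}$). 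Finally $\dot\gamma\bJ Z$ goes into the $L^1_tH^1_x$ part with $\norm{\dot\gamma\bJ Z}_{L^1_tH^1_x}\le\norm{\dot\gamma}_{L^1[0,T]}\norm{Z}_{L^\infty_tH^1_x}\le(C_0\epsilon)^2$. Collecting, $\norm{Z}_{\mathcal X_T}\le C\epsilon^2+C(C_0\epsilon)^2\le\epsilon$ for $\epsilon$ small, which with Step 1 proves \eqref{closing-a}.

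I expect the main obstacle to be the treatment of the scalar coefficient $\alpha(t)\bJ Z$: its $\omega(t)-\omega_0$ part is not time-integrable with a $T$-independent bound and must be absorbed into the linear propagator, which rests on the elementary but essential facts that $\bJ$ commutes with $\bD_m$ and that $[\bJ,\bW(\omega_0)]$ is exponentially localized. The nonlinear estimate is routine once $k\ge 2$ is used, and the bookkeeping of the remaining localized terms (and of the $R$–$Z$ conversions) is standard.
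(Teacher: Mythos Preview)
Your Step~1 and the nonlinear bookkeeping in Step~2 match the paper almost exactly, and your estimates for $\bN_1$, the $\bW(\omega)-\bW(\omega_0)$ term, and the commutator terms are fine.

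The gap is in your handling of $(\omega(t)-\omega_0)\bJ Z$. You propose to absorb it into the propagator and assert that, because $[\bJ,\bJ\bL(\omega_0)]=\bJ\bW\bJ+\bW$ is exponentially localized, the propagator $U(t,s)$ agrees with $e^{(t-s)\bJ\bL(\omega_0)}e^{-\beta(t,s)\bJ}$ ``up to a small exponentially localized correction''. But the correction, after conjugating by $e^{\beta\bJ}$, is $e^{-\beta\bJ}\bJ\bW e^{\beta\bJ}-\bJ\bW$, whose size is governed by $\beta(t)=\int_0^t(\omega(s)-\omega_0)\,ds$; you only control $\|\omega-\omega_0\|_{L^\infty}\le C_0\epsilon$, not $\|\omega-\omega_0\|_{L^1}$, so $|\beta(t)|$ can be $O(1)$ for $t\gtrsim(C_0\epsilon)^{-1}$. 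The correction is then localized but \emph{not small}, and feeding it back through Duhamel and Lemma~\ref{lemma20} produces a term $\le C\|Z\|_{\mathcal X_T}$ with $C$ of order one, which cannot be absorbed into the left-hand side.

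The paper closes this gap not by a smallness argument but by a Fredholm-type one: Lemma~\ref{X.est} conjugates by $e^{\beta(t)\bJ}$ to an auxiliary block-diagonal operator $\bL_\nu$ (with which $\bJ$ commutes exactly), and the resulting $O(1)$ localized perturbation $\bW_\nu$ is handled by proving that a certain operator $I-T_0$ on $L^2_tH^1_x$ is invertible (Lemma~\ref{invert-T0}), using the $\langle t\rangle^{-3/2}$ weighted decay for $e^{t\bJ\bL_\nu}$. Once Lemma~\ref{X.est} is in hand, the paper applies it with the \emph{full} $\alpha(t)=\dot\gamma+(\omega-\omega_0)$ (no need to split), and the remainder of the argument is your $\mathcal Y_T$-estimate of $P_c(\omega_0)F_0+\alpha[\bJ,P_c(\omega_0)]Z$. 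So your outline is correct except that the single sentence about $U(t,s)$ hides a nontrivial lemma that you have not supplied.
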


\begin{proof}
By \eqref{para-eqns}, the invertibility of
$\mathcal{A}(t)$ (Cf. Lemma~\ref{lemma-a-invertible}),
and the bounds from Lemma~\ref{lemma-sim-nn1},
we conclude that
\[
\abs{\dot\gamma}+\abs{\dot\omega}
\le C\langle\upmu,\abs{Z(t)}^2\rangle,
\]
hence, for small enough $\epsilon>0$,
\begin{equation}\label{para-dot-L1}
\norm{\dot\omega}_{L^1_t[0,T]}
+\norm{\dot\gamma}_{L^1_t[0,T]}
\le
C
\int_0^T
\langle\upmu,\abs{Z(t)}^2\rangle
\,dt
\leq C\norm{\upmu^{1/3}Z}_{L^\infty_x  L^2_t}^2
\leq C\norm{Z}_{\mathcal{X}_T}^2\leq C C_0^2 \epsilon^2
\leq \epsilon;
\end{equation}
we used the bound on $\norm{Z}\sb{\mathcal{X}_T}$
from \eqref{assum-close}.
This proves the first estimate in \eqref{closing-a}.

With \eqref{para-dot-L1}, we also have
\begin{equation}\label{para-Linfty}
\norm{\omega -\omega_0}_{L^\infty_t[0,T]}
\le
\norm{\dot\omega}_{L^1_t[0,T]}
\leq C\norm{Z}_{\mathcal{X}_T}^2
\leq C C_0^2\epsilon^2\le\epsilon.
\end{equation}
It follows from \eqref{Z-eqn-new}
with the initial data \eqref{def-z0},
\eqref{para-Linfty}, and from Lemma~\ref{X.est} below
that if $\epsilon>0$ is sufficiently small, then
\begin{equation}\label{Z-eqn.est}
\norm{Z}_{\mathcal{X}_T} \leq
C\Big[\norm{Z(0)}_{H^1} + \norm{F}_{\mathcal{Y}_T}\Big],
\qquad
F(t):=P_c(\omega_0)F_0(t)+\alpha(t)[\bJ,P_c(\omega_0)]Z(t).
\end{equation}
    From the definition \eqref{F0.def} of $F_0$,
we see that
\begin{equation}
\norm{F_0-\bJ\bN_1}_{\mathcal{Y}_T}
\leq
C\Big[
\norm{\omega-\omega_0}_{L^\infty_t[0,T]}
\norm{Z}_{\mathcal{X}_T}
+\norm{\dot{\gamma}}_{L^\infty_t[0,T]}
+\norm{\dot{\omega}}_{L^\infty_t[0,T]}
\Big].
\end{equation}
We used the bound
$\norm{R}\sb{\mathcal{X}_T}
\le
C\norm{Z}\sb{\mathcal{X}_T}
$
which follows from \eqref{r-z-same}.
Noting that $[\bJ, P_c(\omega_0)]$ is localized in space
and recalling that
$\alpha(t)=\dot\gamma(t)+\omega(t)-\omega_0$,
we also have
\begin{eqnarray}\label{w-a-h}
&&
\norm{P_c(\omega_0)(F_0-\bJ\bN_1)+\alpha(t)[\bJ,P_c(\omega_0)Z]
}_{\mathcal{Y}_T}
\nonumber
\\
&&
\leq C\Big[(\norm{\dot{\gamma}}_{L^\infty_t[0,T]}
+\norm{\omega-\omega_0}_{L^\infty_t[0,T]})\norm{Z}_{\mathcal{X}_T}
+\norm{\dot{\gamma}}_{L^\infty_t[0,T]}
+\norm{\dot{\omega}}_{L^\infty_t[0,T]}
 \Big].
\end{eqnarray}
By Lemma~\ref{lemma-sim-nn1},
\begin{equation}\label{para-dot-Linfty}
\abs{\dot\omega(t)}+\abs{\dot\gamma(t)}
\leq C\norm{Z(t)}_{L^2_x}^2
\leq C\norm{Z}_{\mathcal{X}_T}^2
\leq C C_0^2 \epsilon^2
\leq \epsilon,
\qquad
0\le t\le T,
\end{equation}
as long as $\epsilon>0$ is sufficiently small.
Applying \eqref{para-dot-Linfty} and \eqref{para-Linfty}
in \eqref{w-a-h},
we conclude that
there is $C<\infty$
such that
\begin{equation}\label{F1.est}
\norm{P_c(\omega_0)(F_0-\bJ\bN_1)+\alpha(t)[\bJ,P_c(\omega_0)]Z
}_{\mathcal{Y}_T}
\leq C\norm{Z}_{\mathcal{X}_T}^2.
\end{equation}
By
\eqref{r-z-same} and \eqref{N1-est},
using Young's inequality,
we see that
\[
\bN_1
=\mathcal{O}(\abs{\phi}^{2k-1}|R|^2+|R|^{2k+1})
=\mathcal{O}
\big(
\abs{\phi}^{2k-1}|Z|^2+|Z|^{2k+1}
+\upmu \abs{\omega-\omega_0}^2
\langle\upmu^{2k},\abs{R}\rangle^2
\big).
\]
Then, it follows from \eqref{Z_R} and \eqref{para-Linfty} that 
\begin{equation}\label{Omega.est}
\norm{\bN_1}_{\mathcal{Y}_T}
\leq C
\Big(
\norm{\abs{\phi}^{2k-1}\abs{Z}^2}_{\mathcal{Y}_T}
+\norm{\abs{Z}^{2k+1}}_{\mathcal{Y}_T}
+ C\norm{Z}_{\mathcal{X}_T}^4 \Big).
\end{equation}

On the other hand, from the definitions of
$\norm{\cdot}_{\mathcal{X}_T}, \norm{\cdot}_{\mathcal{Y}_T}$
(Cf. Definition~\ref{def-x-y}),
we observe that
\begin{equation}\label{localized-term}
\norm{\abs{\phi}^{2k-1} \abs{Z}^2}_{\mathcal{Y}_T}
\leq C\norm{\wei{x}^n \abs{\phi}^{2k-1}\abs{Z}^2}_{L^1_x L^2_t} +
C\norm{\wei{x}^N \p_{x}[\abs{\phi}^{2k-1} \abs{Z}^2]} \leq C\norm{Z}_{\mathcal{X}_T}^2 \leq
C\norm{Z}_{\mathcal{X}_T}^2.
\end{equation}
Similarly, we have
\[
\norm{\abs{Z}^{2k+1}}_{\mathcal{Y}_T} \leq C\norm{\abs{Z}^{2k+1}}_{L^1_t H^1_x} \leq
C\norm{(|Z| + |\p_x Z|)\abs{Z}^{2k}}_{L^1_t L^2_x} \leq C\norm{Z}_{L_t^\infty H^1_x}\norm{Z}_{L^{2k}_t L^\infty_x}^{2k}.
\]
We note that
$
\norm{Z}_{L^\infty_t H^1_x}
\leq \norm{Z}_{\mathcal{X}_T};
$
since $k \geq 2$, we arrive at
\[
\norm{Z}_{L^{2k}_t L_x^\infty} \leq \norm{Z}_{L^4_t L^\infty_x}^{2/k} \norm{Z}_{L_t^\infty L_x^\infty}^{1-2/k} \leq C\norm{Z}_{\mathcal{X}_T}.
\]
Therefore,
\begin{equation}\label{2k+1}
\norm{\abs{Z}^{2k+1}}_{\mathcal{Y}_T} \leq C\norm{Z}_{\mathcal{X}_T}^{2k+1}.
\end{equation}
In summary, it follows from \eqref{F1.est}, \eqref{Omega.est}, \eqref{localized-term}, and \eqref{2k+1} that
there is $C<\infty$ such that
\begin{equation*} 
\norm{P_c(\omega_0)F_0+\alpha(t)[\bJ,P_c(\omega_0)]Z}_{\mathcal{Y}_T}
\leq C\norm{Z}_{\mathcal{X}_T}^{2}.
\end{equation*}
    From this and \eqref{Z-eqn.est}, we infer that if $\epsilon>0$ is
sufficiently small, then we have
\begin{equation}\label{Z.last.est}
\norm{Z}_{\mathcal{X}_T} \leq C[\norm{Z(\cdot, 0)}_{H^1} + \norm{Z}_{\mathcal{X}_T}^2] \leq C[1+C_0]\epsilon^2 \leq \epsilon.
\end{equation}
This proves the last estimate of \eqref{closing-a},
completing the proof of the lemma.
\end{proof}

\medskip

From Lemma \ref{lemma-closing} and the local existence theory \cite{MR2883845}, it follows that there exists unique
global solution to equation \eqref{N.eqn.gn},
\[
\psi(x, t)=\big(\phi\sb{\omega(t)}(x)+\rho(x,t)\big)
e^{-i\left(\int_0^t\omega(s)\,ds+\gamma(t)\right)},
\qquad
t\ge 0,
\]
with $\omega$, $\gamma$, and $\rho$ satisfying the estimates
\[
\norm{\dot\omega}_{L^1(\R\sb{+})}
+\norm{\dot\gamma}_{L^1(\R\sb{+})}\leq \epsilon,
\qquad
\norm{Z}_{\mathcal{X}_\infty} \leq \epsilon.
\]
    From this, we infer that there exist
$\omega_\infty,\,\gamma_\infty\in\R$ such that
\[
\lim_{t\rightarrow \infty}\omega(t) = \omega_\infty, \qquad \lim_{t \rightarrow \infty}\gamma(t) = \gamma_\infty,
\qquad
\lim_{t\rightarrow \infty}\norm{Z(t)}_{L^\infty_x} =0.
\]
The last relation is due to
$\norm{Z}\sb{L^4_t L^\infty_x}
\le \norm{Z}\sb{\mathcal{X}_\infty}$.
Due to \eqref{r-z-same} and \eqref{para-Linfty},
assuming that $\epsilon>0$ is sufficiently small,
we also have
\[
\lim_{t\rightarrow \infty}\norm{\rho(t)}_{L^\infty_x}=0.
\]
This completes the proof of the main theorem.

\appendix

\section{Appendix: Estimates for the linear perturbed equation}

This subsection proves the estimate \eqref{Z-eqn.est} on $Z$.
The main result is the following lemma:

\begin{lemma}\label{X.est}
Fix $\omega_0 \in\varOmega$.
Let $Z(t) \in\bX_c(\omega_0)$ be a solution to the equation
\[
\left\{
\begin{array}{lll}
& \p_t {Z}-\bJ\bL (\omega_0) {Z}
+\alpha(t) \bJ{Z} = F,
&\quad t >0, \\
& {Z}(0) = {Z}_0 \in\bX_c(\omega_0).
\end{array} \right.
\]
Then there exist
$c_0 >0$ and $C<\infty$
such that if $|\alpha(t)| \leq c_0$, we have
\[
\norm{{Z}}_{\mathcal{X}} \leq C\Big[\norm{{Z}(0)}_{H^1}
+\norm{F}_{\mathcal{Y}} \Big].
\]
\end{lemma}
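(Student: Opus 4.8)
The plan is to obtain the estimate by feeding Duhamel's formula into the dispersive machinery of Section~\ref{disp-section}. Since $Z(t)\in\bX_c(\omega_0)$ for all $t$, we have $P_c(\omega_0)Z=Z$, so projecting the equation with $P_c(\omega_0)$ costs nothing, and Duhamel against the semigroup $e^{t\bJ\bL(\omega_0)}P_c(\omega_0)$ gives
\[
Z(t)=e^{t\bJ\bL(\omega_0)}P_c(\omega_0)Z_0+\int_0^t e^{(t-\tau)\bJ\bL(\omega_0)}P_c(\omega_0)\big[F(\tau)-\alpha(\tau)\bJ Z(\tau)\big]\,d\tau.
\]
Using the identity $P_c(\omega_0)\bJ Z=\bJ Z-P_d(\omega_0)\bJ Z$ (valid because $Z=P_c(\omega_0)Z$), the coupling term splits into the ``bulk'' piece $\alpha\bJ Z$ and the finite-rank localized piece $\alpha P_d(\omega_0)\bJ Z$.

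\textbf{The routine pieces.} First I would control everything except the bulk term. The homogeneous term $e^{t\bJ\bL(\omega_0)}P_c(\omega_0)Z_0$ is bounded in $\mathcal{X}$ by $C\|Z_0\|_{H^1}$ using the Strichartz estimates \eqref{a650}, \eqref{a660} (with the derivative upgrade) together with the weighted decay estimates of Proposition~\ref{prop12} and their $\p_x$-version from the Remark following it. The Duhamel term with source $F$ is handled by splitting $F=A+B$ as in the definition of $\|\cdot\|_{\mathcal{Y}}$: the $A$-component through \eqref{a670} (Strichartz plus the Christ--Kiselev lemma, Lemma~\ref{kiselev}), the $B$-component through \eqref{a910}, \eqref{a970} and Lemma~\ref{lemma20}; altogether this is $\le C\|F\|_{\mathcal{Y}}$. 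For the localized piece, since the range of $P_d(\omega_0)$ is spanned by $\p_\omega\bmupphi_{\omega_0}$ and $\bJ\bmupphi_{\omega_0}$, which together with their $x$-derivatives are exponentially decaying, Minkowski's inequality gives $\|\langle x\rangle^{N}\alpha P_d(\omega_0)\bJ Z\|_{L^1_xL^2_t}+\|\langle x\rangle^{N}\p_x[\alpha P_d(\omega_0)\bJ Z]\|_{L^1_xL^2_t}\le c_0\,C\|Z\|_{\mathcal{X}}$; feeding this through \eqref{a910} contributes $\le c_0\,C\|Z\|_{\mathcal{X}}$, which is absorbed into the left-hand side once $c_0$ is small.

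\textbf{The main obstacle.} The genuinely delicate term is the bulk one, $\int_0^t e^{(t-\tau)\bJ\bL(\omega_0)}P_c(\omega_0)[\alpha(\tau)\bJ Z(\tau)]\,d\tau$, because $\bJ Z$ carries no spatial localization while $\alpha$ is only pointwise small (it is not $L^1_t$, since in the application $\alpha\to\omega_\infty-\omega_0$), so it cannot be placed in $\mathcal{Y}$ nor absorbed by a naive Gronwall argument on $[0,T]$. The device I would use is conjugation: with $\beta(t)=\int_0^t\alpha(s)\,ds$ and $\widetilde Z=e^{\beta(t)\bJ}Z$, the rotation $e^{\beta\bJ}$ commutes with $\bD_m$ and with $\bmupbeta$ but not with $\bW$, so $\widetilde Z$ solves
\[
\p_t\widetilde Z-\bJ\bL(\omega_0)\widetilde Z=\bJ\big(\bW_\beta-\bW\big)\widetilde Z+e^{\beta\bJ}F,
\qquad
\bW_\beta:=e^{\beta\bJ}\bW e^{-\beta\bJ};
\]
thus the non-decaying perturbation $-\alpha\bJ$ is traded for the exponentially localized (time-dependent) perturbation $\bJ(\bW_\beta-\bW)$, which inherits both the decay rate of $\bW$ from Lemma~\ref{lemma10} and the reflection symmetry \eqref{w-symmetry}. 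One then runs the same weighted-smoothing plus Christ--Kiselev argument on $\widetilde Z$; the decisive point -- that the estimates of Section~\ref{disp-section} stay uniform in $T$ although $\beta(t)$ grows linearly -- is exactly the step carried out in \cite[Lemma~11]{MR2511047} and \cite[Lemma~2, Lemma~4]{MR2985264}, whose treatment we follow. Undoing the conjugation (a pointwise rotation, hence bounded on every space entering $\mathcal{X}$ and $\mathcal{Y}$) returns the bound for $Z$. Collecting the estimates and absorbing the $c_0$-small contributions completes the proof; the rest is a bookkeeping assembly of the Section~\ref{disp-section} estimates.
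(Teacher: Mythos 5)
Your opening moves are sound: recognizing that the ``bulk'' coupling $\alpha\bJ Z$ is the genuine obstacle (since $\alpha$ is merely small, not integrable, and $\bJ Z$ has no spatial localization), and conjugating by $U(t)=e^{\beta(t)\bJ}$ so that the non-decaying coefficient is traded for the localized $\bJ(\bW_\beta-\bW)$, is exactly the strategy the paper adopts. Your algebra is also correct: $\bJ$ commutes with $\bD_m$ and $\bmupbeta$ but not with $\bW$ (because $W_1\neq W_0$), so the conjugated equation is indeed $\p_t\widetilde Z-\bJ\bL(\omega_0)\widetilde Z=\bJ(\bW_\beta-\bW)\widetilde Z+e^{\beta\bJ}F$.

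However, there is a genuine gap at the point where you claim to ``run the same weighted-smoothing plus Christ--Kiselev argument on $\widetilde Z$.'' The rotation $e^{\beta\bJ}$ does \emph{not} commute with $P_c(\omega_0)$: since $e^{-\beta\bJ}\bmupphi=\cos\beta\,\bmupphi-\sin\beta\,\bJ\bmupphi$ and $\langle\bJ\bmupphi,Z\rangle$ is not constrained to vanish, the symplectic orthogonality conditions \eqref{symplectic-orth} are destroyed by the conjugation, so $\widetilde Z(t)=e^{\beta(t)\bJ}Z(t)\notin\bX_c(\omega_0)$ for $\beta(t)\neq 0$. All the dispersive estimates of Section~\ref{disp-section} (Proposition~\ref{prop12}, Lemmas~\ref{lemma20}, \ref{lemma30}, \ref{lemma50}) are estimates for $e^{t\bJ\bL}P_c(\omega)$, not for the unprojected semigroup, which carries the two-dimensional Jordan block at $\lambda=0$ and therefore has no decay. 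Running Duhamel on $\widetilde Z$ against $e^{t\bJ\bL(\omega_0)}$ thus does not produce usable bounds; and if one instead splits $\widetilde Z=P_c\widetilde Z+P_d\widetilde Z$, the $P_d$-component obeys a nilpotent (Jordan-block) ODE with sources that need not be time-integrable, so $P_d\widetilde Z$ can grow linearly in $t$, which would ruin uniformity in $T$. The citations to \cite[Lemma~11]{MR2511047} and \cite[Lemma~2]{MR2985264} concern Christ--Kiselev-type splittings of the Duhamel integral and do not address this invariance issue.

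The paper's proof circumvents precisely this obstacle with two devices that are absent from your outline. First, it extends the problem from $Z$ to an auxiliary $\Psi$ on the whole of $L^2$ via the modified equation $\p_t\Psi-\bJ\bL P_c\Psi+\kappa P_d\Psi+\alpha\bJ P_c\Psi=F$ (so that $Z=P_c\Psi$): the damping term $\kappa P_d$ kills the Jordan block and removes the polynomial growth of the discrete part, without which the conjugated problem is not stable. Second, it introduces the reference operator $\bL_\nu=\mathrm{diag}(H_\nu,H_\nu)$ with $H_\nu=D_m-\omega_0+V_\nu$, which (unlike $\bL(\omega_0)$ projected by $P_c$) genuinely \emph{commutes} with $\bJ$ and, by \cite[Theorem~3.7]{MR2857360}, enjoys the clean weighted decay $\|e^{-t\bJ\bL_\nu}\|_{H^k_\sigma\to H^k_{-\sigma}}\lesssim\langle t\rangle^{-3/2}$ without any projection at all. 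The conjugation is then performed against $\bJ\bL_\nu$, which survives it intact, and the leftover $\bW_\nu=\bL(\omega_0)-\bL_\nu$ together with the finite-rank terms is a localized perturbation; the fixed point is closed via the invertibility of $I-T_0$ (Lemma~\ref{invert-T0}), which compares $T_0$ (with the $U(t)U^{-1}(s)$ factor) to its unrotated counterpart $T_1$. To complete your argument you would need both of these ingredients — simply conjugating and quoting Section~\ref{disp-section} does not close.
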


We recall that $\bX_c(\omega_0)$
is defined in \eqref{def-x-decomposition}.

\begin{proof} It follows from our linear estimates in Section~\ref{disp-section} that Lemma~\ref{X.est}
holds when $\alpha =0$. The proof therefore is a perturbative argument.
 We base our argument on \cite[Appendix B]{MR2898769},
which originates in \cite{MR2831875}. In the perturbation argument, instead of using the
free operator as in \cite{MR2831875, MR2898769}, we shall make use of the operator
\[
\bL_\nu
=\begin{bmatrix}H_\nu&0\\0& H_\nu\end{bmatrix}, \qquad \text{with} \quad
H_\nu := D_m -\omega_0+ V_\nu,
\]
where $V_\nu$ is a fixed matrix-valued potential which is sufficiently small
and decays exponentially,
and such that the point spectrum $\sigma_d(H_\nu)$
of $H_\nu$ is empty
and there is no resonance at thresholds $\Lambda=\pm m-\omega_0$. The advantage of using $\bL_\nu$ is that it has stronger decay estimates \eqref{ref-est} which essentially follow from \cite[Theorem 3.7]{MR2857360}.

We now denote
$\bW_\nu=\bL(\omega_0)-\bL_\nu$, the exponentially decaying
matrix potential; thus,
\[
\bL(\omega_0) = \bL_\nu + \bW_\nu.
\]
For fixed $\varkappa >0$ and for $P_d(\omega_0)
:= \text{Id}-P_c(\omega_0)$, we consider the auxiliary equation
\begin{equation}\label{au-est}
\p_t \Psi-\bJ\bL(\omega_0) P_c(\omega_0) \Psi + \varkappa P_d(\omega_0) \Psi + \alpha \bJ P_c(\omega_0) \Psi
=F,
\qquad \Psi(0) = {Z}(0).
\end{equation}
We note that ${Z}=P_c(\omega_0)\Psi$,
therefore it suffices to prove the estimate for $\Psi$. Let us denote
\[
\beta(t) = \int_0^t \alpha(s)\,ds,
\qquad U(t) = e^{\beta(t)\bJ},
\qquad \Psi(t) = U(t) \Phi.
\]
Then it follows from \eqref{au-est} that
\[
\p_t \Phi + U^{-1}(-\bJ\bL(\omega_0)+\varkappa P_d(\omega_0))U\Phi=
G, 
\qquad
G: = U^{-1}F + \alpha(t) U^{-1}\bJ P_d(\omega_0) U\Phi.
\]
Since $\bJ$ commutes with $\bL_\nu$, we obtain
\begin{equation}\label{Phi.eqn}
\p_t \Phi -\bJ\bL_\nu \Phi =-U^{-1}(\bW-\bJ\bL(\omega_0) P_d(\omega_0) + \varkappa P_d(\omega_0)) U\Phi + G.
\end{equation}
Now, we choose $V_2$ a smooth, exponentially decaying, invertible matrix potential such that the matrix
\[
V_1 = (\bW-\bJ\bL P_d + \varkappa P_d) V_2^{-1}
\]
is also smooth and exponentially decaying.
Then, note that $\Phi(0) =\Psi(0)$, and
$\bL_\nu$ commutes with $\bJ$. Therefore, applying $U(t)$
to both sides of equation \eqref{Phi.eqn}, we infer that
\begin{equation}\label{R.eqn-est}
\begin{split}
\Psi(t)&= U(t)e^{-t \bJ\bL_\nu}\Psi(0) + \int_0^t e^{-(t-s)\bJ\bL_\nu}\Big[U(t)U^{-1}(s) V_1V_2 \Psi(s)-U(t) G(s) \Big]ds \\
&=U(t)e^{-t \bJ\bL_\nu}\Psi(0) + \int_0^t e^{-(t-s) \bJ\bL_\nu}U(t)U^{-1}(s) \Big[(V_1-\alpha(s) \bJ P_d(\omega_0) V_2^{-1} )V_2 \Psi-F(s)\Big]ds.
\end{split}
\end{equation}
On the other hand, it follows from \cite[Section VIII]{MR2985264} that
\begin{equation*}
\norm{\Psi}_{\mathcal{X}} \leq
C\Big[\norm{\Psi(0)}_{H^{1}} +\norm{F}_{\mathcal{Y}} + \norm{V_2 \Psi}_{L^2_t H^1_x} +\norm{\alpha}_{L^\infty}\norm{P_d \Psi} _{\mathcal{Y}}\Big].
\end{equation*}
Note that
$\norm{P_d \Psi}_{\mathcal{Y}} \leq C\norm{\Psi}_{\mathcal{X}}$.
Therefore, if $\norm{\alpha}_{L^\infty}$ is sufficiently small, we obtain
\begin{equation}\label{R-est.1}
\norm{\Psi}_{\mathcal{X}} \leq
C\Big[\norm{\Psi(0)}_{H^{1}} +\norm{F}_{\mathcal{Y}} + \norm{V_2 \Psi}_{L^2_t H^1_x}\Big].
\end{equation}
Next, we need to control $\norm{V_2 \Psi}_{L^2_t H^1_x}$. We denote
\[
T_0 f(t) = V_2\int_0^t e^{-(t-s)\bJ\bL_\nu} U(t) U^{-1}(s) V_1 f(\cdot, s)\,ds.
\]
    From Lemma~\ref{invert-T0} below, we see that
the mapping
$I-T_0
:\;L^{2}_{t}H^1_x\to L^2_{t}H^1_x$
is invertible
and there exists $C<\infty$ such that
$
\norm{(I- T_0)^{-1}}_{L^2_{t}H^1_x \rightarrow L^2_{t}H^1_x} \leq C.
$
By \eqref{R.eqn-est}, we see that
\begin{equation}
(I-T_0) V_2 \Psi = V_2 U(t) e^{-t\bJ\bL_\nu}\Psi(0)
-V_2 \int_0^t e^{-(t-s)\bJ\bL_\nu} U(t)U^{-1}(s)[F(s) +\alpha(s)\bJ P_d \Psi(s)]\,ds.
\end{equation}
Therefore, using again the linear estimates from \cite{MR2985264},
we obtain:
\[
\begin{split}
\norm{V_2 \Psi}_{L^2_t H^1_{x}}&\leq \norm{V_2 U(t) e^{-t \bJ\bL_\nu}\Psi(0)}_{L^2_{t}H^1_x}
+ \norm{V_2 \int_0^t e^{-(t-s) \bJ\bL_\nu} U(t)U^{-1}(s)[F(s) +\alpha(s)\bJ P_d \Psi(s)]\,ds}_{L^2_{t}H^1_x}\\
& \leq
C\Big[ \norm{\Psi(0)}_{H^1_x} +\norm{F}_{\mathcal{Y}} + \norm{\alpha}_{L^\infty} \norm{\Psi}_{\mathcal{X}}\Big].
\end{split}
\]
    From this and \eqref{R-est.1}, we see that there is $c_0>0$ sufficiently small such that if $\norm{\alpha}_{L^\infty} \leq c_0$, then one has
$
\norm{\Psi}_{\mathcal{X}} \leq C\Big[\norm{\Psi(0)}_{H^1} + \norm{F}_{\mathcal{Y}} \Big].
$
Since ${Z}=P_c(\omega_0)\Psi$,
this completes the proof of the lemma.
\end{proof}
\begin{lemma}\label{invert-T0}
For $k =0,\,1$, the map $I-T_0 : L^2_{t}H^k_x \mapsto L^2_{t}H^k_x$ is invertible and therefore there exists
$C<\infty$ such that
\[ \norm{(I- T_0)^{-1}}_{L^2_{t}H^k_x \rightarrow L^2_{t}H^k_x} \leq C. \]
\end{lemma}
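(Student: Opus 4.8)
The plan is to follow the scheme of \cite[Appendix~B]{MR2898769} (which originates in \cite{MR2831875}), reducing the statement to a frequency-by-frequency Birman--Schwinger analysis of the reference operator $\bL_\nu$. First I would isolate the dependence on $\alpha$. Since $U(t)U^{-1}(s)=e^{(\beta(t)-\beta(s))\bJ}$ is unitary and commutes with $e^{-(t-s)\bJ\bL_\nu}$, with $\|U(t)U^{-1}(s)-I\|\lesssim\|\alpha\|_{L^\infty}|t-s|$, the weighted decay estimates for $e^{-t\bJ\bL_\nu}$ from \cite[Theorem~3.7]{MR2857360} (recall $\bL_\nu$ has empty point spectrum and no threshold resonance) allow one to split the $s$-integral into $\{|t-s|\le 1\}$ and $\{|t-s|\ge 1\}$ and to bound $T_0-T_0^{(0)}$ on $L^2_tH^k_x$ by $C\|\alpha\|_{L^\infty}$, where $T_0^{(0)}$ is the operator obtained by setting $\alpha\equiv 0$, so that $U\equiv I$ and $T_0^{(0)}$ is the causal convolution with kernel $\Theta(t-s)\,V_2 e^{-(t-s)\bJ\bL_\nu}V_1$. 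Thus it suffices to prove that $I-T_0^{(0)}$ is invertible on $L^2_tH^k_x$ with a bound depending only on $\omega_0$, after which one writes $I-T_0=(I-T_0^{(0)})\bigl(I-(I-T_0^{(0)})^{-1}(T_0-T_0^{(0)})\bigr)$ and sums a Neumann series for $\|\alpha\|_{L^\infty}$ small (consistent with the hypothesis $|\alpha(t)|\le c_0$ of Lemma~\ref{X.est}).

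To invert $I-T_0^{(0)}$ I would use the causal structure: functions in $L^2_tH^k_x$ are supported in $t\ge 0$, so taking the Fourier (Laplace) transform in $t$ diagonalizes $T_0^{(0)}$ into the family of operators $\tau\mapsto V_2\,R^{\pm}_{\bJ\bL_\nu}(i\tau)\,V_1$ on $H^k_x$, $\tau\in\R$, where $R^{\pm}_{\bJ\bL_\nu}(i\tau)$ is the limiting-absorption resolvent of $\bJ\bL_\nu$; consequently $I-T_0^{(0)}$ is invertible on $L^2_tH^k_x$ once $I-V_2 R^{\pm}_{\bJ\bL_\nu}(i\tau)V_1$ is invertible on $H^k_x$ for every $\tau$, with a bound uniform in $\tau$. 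Because $R^{\pm}_{\bJ\bL_\nu}(i\tau)$ is bounded $L^2_s\to L^2_{-s}$ (uniformly in $\tau$, by \cite{MR2857360}) and $V_1,V_2$ are smooth and exponentially decaying, $V_2 R^{\pm}_{\bJ\bL_\nu}(i\tau)V_1$ is compact on $H^k_x$, so by the Fredholm alternative the operator $I-V_2 R^{\pm}_{\bJ\bL_\nu}(i\tau)V_1$ is invertible precisely when it is injective; by the Birman--Schwinger correspondence its non-injectivity would force $i\tau$ to be an eigenvalue or a resonance of $\bJ\bL_\nu+\bJ(V_1V_2)$. Since $V_1V_2$ differs from $\bW_\nu=\bL(\omega_0)-\bL_\nu$ only by the finite-rank terms built from $P_d(\omega_0)$ and the $\varkappa P_d(\omega_0)$ shift (which moves the discrete part off the imaginary axis), the only surviving obstruction would be a purely imaginary eigenvalue or a resonance at $\lambda=1\pm|\omega_0|$ of $\bJ\bL(\omega_0)$ with (generalized) eigenfunction in $\bX_c(\omega_0)$ — ruled out by Assumption~\ref{ass-1}(iii)--(iv). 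The required uniformity in $\tau$ then comes from Proposition~\ref{prop9}: the bound \eqref{t:102-0} controls $|\tau|\le K$, while \eqref{a230-0}, together with the decay supplied by $V_1,V_2$, gives $\|V_2 R^{\pm}_{\bJ\bL_\nu}(i\tau)V_1\|_{H^k_x\to H^k_x}\to 0$ as $|\tau|\to\infty$, so that $\sup_\tau\|(I-V_2 R^{\pm}_{\bJ\bL_\nu}(i\tau)V_1)^{-1}\|_{H^k_x\to H^k_x}<\infty$.

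The case $k=1$ runs identically to $k=0$: all of the resolvent and decay bounds invoked above admit $H^1_x$ versions, $V_1,V_2$ are smooth, and one may alternatively commute $\p_x$ through $e^{-t\bJ\bL_\nu}$ modulo the lower-order term generated by $\p_x V_\nu\neq V_\nu\p_x$, exactly as in Section~\ref{disp-section}. I expect the main obstacle to be the rigorous justification of the Fourier-transform diagonalization of $T_0^{(0)}$ — identifying the Laplace transform of the causal kernel $\Theta(\cdot)e^{-(\cdot)\bJ\bL_\nu}$ with $R^{\pm}_{\bJ\bL_\nu}(i\tau)$ as operators between the weighted spaces, particularly at the threshold frequencies $\tau=\pm(1-|\omega_0|)$ where the limiting-absorption principle must be applied with the mild $\langle x\rangle$-growth of resolvent kernels in mind (cf. \eqref{g-small}) — together with the bookkeeping of the finite-rank $P_d(\omega_0)$ corrections in the Birman--Schwinger step and the uniform control of the $\alpha$-perturbation on unbounded time intervals; once these are in place the conclusion follows as above.
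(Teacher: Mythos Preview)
Your approach matches the paper's: both reduce to the $\alpha\equiv 0$ operator (the paper's $T_1$, your $T_0^{(0)}$) by a Neumann-series perturbation based on the $\langle t\rangle^{-3/2}$ weighted decay of \cite{MR2857360}, and then defer to \cite[Lemma~B.2]{MR2898769} for the invertibility of $I-T_1$; the paper simply cites that lemma while you sketch the underlying Fourier/Birman--Schwinger mechanism.

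One small correction to your perturbation step: splitting at $|t-s|=1$ does \emph{not} give smallness on the far piece $\{|t-s|\ge 1\}$, since there you only have $\|U(t)U^{-1}(s)-I\|\le 2$, and the resulting kernel contributes an $O(1)$ operator norm regardless of $\|\alpha\|_{L^\infty}$. The paper instead interpolates via
$\min(1,\|\alpha\|_{L^\infty}|t-s|)\cdot\langle t-s\rangle^{-3/2}\le C\|\alpha\|_{L^\infty}^{1/4}\langle t-s\rangle^{-5/4}$,
which is integrable in $t-s$ and yields $\|T_0-T_1\|_{L^2_tH^k_x\to L^2_tH^k_x}\le C\|\alpha\|_{L^\infty}^{1/4}$ (equivalently, split at $|t-s|\sim\|\alpha\|_{L^\infty}^{-1}$ to get a power of $\|\alpha\|_{L^\infty}$).
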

\begin{proof}
First, note that it follows from the linear estimates in \cite[Section VIII]{MR2985264} that $T_0$ is well-defined
as an operator from $L^2_{t}H^k_x$ to $L^2_{t}H^k_x$, with $k =0,\,1$. We now let
\[
T_1f(t) =V_2\int_0^t e^{-(t-s) \bJ\bL_\nu}V_1 f(\cdot, s)\,ds.
\]
It follows from our linear estimates in Section~\ref{disp-section} that $T_1$ is also well-defined from $L^2_{t}H^1_x$ to $L^2_{t}H^1_x$.
Also, note that
\[
(T_1 -T_0)f =V_2\int_0^t e^{-(t-s)\bJ\bL_\nu}
\Big(e^{\bJ \int_{t}^s\alpha(\tau)\,d\tau} -1 \Big)V_1 f(\cdot, s)\,ds.
\]
By \cite[Theorem 3.7]{MR2857360}, we have
\[
\norm{e^{-t\bJ\bL_\nu}}_{L^2_{\sigma} \rightarrow L^2_{-\sigma}} \leq C_{\sigma}\wei{t}^{-3/2}, \qquad
\sigma > 5/2.
\]
    From this, we further infer that
$
\norm{\bL_\nu e^{-t\bJ\bL_\nu} f}_{L^2_{-\sigma}} \leq C_\sigma \wei{t}^{-3/2} \norm{\bL_\nu f}_{L^2_\sigma}.
$
Since
$
\norm{f}_{H^1_x} \sim \norm{f}_{L^2} + \norm{\bL_\nu f}_{L^2},
$
we see that
\begin{equation}\label{ref-est}
\norm{e^{-t\bJ\bL_\nu} }_{H^k_{\sigma} \rightarrow H^k_{-\sigma}} \leq C_\sigma \wei{t}^{-3/2}, \qquad \sigma > 5/2, \qquad k =0,\,1.
\end{equation}
Using \eqref{ref-est} and the fact that
\[
\Big| e^{\bJ \int_{t}^s\alpha(\tau)\,d\tau} -1 \Big| \leq
\min\left(
1,\ \norm{\alpha}_{L^\infty}(t-s)
\right),
\]
we obtain:
\[
\norm{V_2 e^{-(t-s) \bJ\bL_\nu}
\Big[e^{\bJ \int_{t}^s\alpha(\tau)\,d\tau} -1 \Big]V_1 f(\cdot, s)\,ds}_{H^k}
\leq C\norm{\alpha}_{L^\infty}^{1/4} \wei{t-s}^{-5/4}\norm{f(\cdot,s)}_{H^k}.
\]
Thus, if $\norm{\alpha}_{L^\infty}$ is sufficiently small, we see that
\[ \norm{T_1- T_0}_{L^2_t H^k_{x} \rightarrow L^2_{t}H^k_x} \leq C\norm{\alpha}_{L^\infty}^{1/4} <1.
\]
Therefore, it suffices to prove that $I- T_1$ is invertible.
The lemma then follows exactly as in \cite[Lemma B.2]{MR2898769}
by using the linear estimates on $e^{-t\bJ\bL(\omega_0)}$
from Section~\ref{disp-section}.
\end{proof}

\bibliographystyle{sima-doi}
\bibliography{all}
\end{document}